\theoremstyle{plain}
\newtheorem{thm}{Theorem}[section]
\newtheorem{lem}[thm]{Lemma}
\newtheorem{cor}[thm]{Corollary}
\newtheorem{por}[thm]{Porism}
\newtheorem{prop}[thm]{Proposition}
\theoremstyle{definition}
\newtheorem{dfn}[thm]{Definition}
\newtheorem{Q}[thm]{Question}
\newtheorem{claim}[thm]{Claim}
\newtheorem{fact}[thm]{Fact}
\newtheorem{observation}[thm]{Observation}
\newtheorem{notation}[thm]{Notation}
\def\dnfo{\;\raise.2em\hbox{$\mathrel|\kern-.9em\lower.4em\hbox
{$\smile$}$}}
\def\dnf#1{\lower.9em\hbox{$\buildrel\dnfo\over{ \scriptstyle  #1}$}}
\def\dfo{\;\raise.2em\hbox{$\mathrel|\kern-.9em\lower.4em\hbox{$\smile$}
\kern-.72em\lower.07em\hbox{\char'57}$}\;}
\def\df#1{\lower1em\hbox{$\buildrel\dfo\over{\scriptstyle #1}$}}
\newcommand{\bQ}{\mathbb{Q}}
\newcommand{\sA}{\mathcal{A}}
\newcommand{\sB}{\mathcal{B}}
\newcommand{\sH}{\mathcal{H}}
\newcommand{\sI}{\mathcal{I}}
\newcommand{\sL}{\mathcal{L}}
\newcommand{\sO}{\mathcal{O}}
\newcommand{\sT}{\mathcal{T}}
\newcommand{\sbar}{\overline{\sigma}}
\newcommand{\tbar}{\overline{\tau}}
\newcommand{\rbar}{\overline{\rho}}
\newcommand{\abar}{\overline{\alpha}}
\newcommand{\bbar}{\overline{\beta}}
\newcommand{\gbar}{\overline{\gamma}}
\renewcommand{\dbar}{\overline{\delta}}
\newcommand{\diverge}{\!\!\uparrow}
\newcommand{\converge}{\!\!\downarrow}
\newcommand{\cat}{\widehat{\phantom{\alpha}}}
\newcommand{\tleq}{\trianglelefteq}
\newcommand{\seq}[1]{\langle #1\rangle}
\newcommand{\uhr}{\!\!\upharpoonright}
\newcommand{\dom}{\text{dom}}
\newcommand{\sigmaii}{\Sigma^1_1}
\newcommand{\piii}{\Pi^1_1}
\newcommand\+[1]{\mathcal{#1}}
\renewcommand\*[1]{\mathbf{#1}}
\begin{document}

\title{Structural Highness Notions}

\author[Calvert]{Wesley Calvert}
\address[Calvert]{School of Mathematical and Statistical Sciences\\ Mail Code 4408\\ 1245 Lincoln Drive\\ Southern Illinois University\\ Carbondale, Illinois 62901\\USA}
\email{wcalvert@siu.edu}
\urladdr{http://lagrange.math.siu.edu/calvert}

\author[Franklin]{Johanna N.Y.\ Franklin}
\address[Franklin]{Department of Mathematics \\ Room 306, Roosevelt Hall \\ Hofstra University \\ Hempstead, NY 11549-0114 \\ USA}
\email{johanna.n.franklin@hofstra.edu}
\urladdr{http://www.johannafranklin.net}
\thanks{The second author was supported in part by Simons Foundation Collaboration Grant \#420806.  Portions of this material are based upon work supported by the National Science Foundation under Grant No.\ DMS-1928930 while the first author participated in a program hosted by the Mathematical Sciences Research Institute in Berkeley, California, during
the Fall 2020 semester.}

\author[Turetsky]{Dan Turetsky}
\address[Turetsky]{School of Mathematics and Statistics \\ Victoria University of Wellington \\ Wellington \\ New Zealand}
\email{dan.turetsky@vuw.ac.nz}

\date{\today}

\begin{abstract}
    We introduce several highness notions on degrees related to the problem of computing isomorphisms between structures, provided that isomorphisms exist.  We consider variants along axes of uniformity, inclusion of negative information, and several other problems related to computing isomorphisms.  These other problems include Scott analysis (in the form of back-and-forth relations), jump hierarchies, and computing descending sequences in linear orders.
\end{abstract}

\maketitle
\tableofcontents

\section{Introduction}

The concepts of lowness and highness allow us to describe the power of an oracle. While they were originally established in the context of degree theory by Soare in the early 1970s \cite{s72}, they have been generalized to other areas of computability theory. In this paper, we investigate the application of highness to computable structure theory.

A set is low in a given setting if using it as an oracle in that context yields results no different from those obtained by using a computable set as an oracle. For instance, a set $A$ is low in degree theory if $A'\equiv_T 0'$, and it is low for a particular randomness notion if it cannot derandomize any random set \cite{flowhigh}. Lowness has even been studied in the context of learning theory \cite{ss91}.

Highness, on the other hand, has been studied in fewer settings. To define a lowness notion, we only need a relativizable class such as the $\Delta^0_2$ sets or the Martin-L\"{o}f random reals. To define a highness notion, we need some kind of maximality as well. In the case of the $\Delta^0_2$ sets, this role is played by $0'$; in the case of the Martin-L\"{o}f random reals, we must turn to the idea of highness for pairs of randomness notions and compare one class to another class containing it \cite{fsy-bases} since there is no natural ``maximally random" real.

In this paper, we discuss computable structure theory. Lowness was first studied in this setting by Franklin and Solomon in \cite{fs-lowim} as \emph{lowness for isomorphism}: a degree $\* d$ is low for isomorphism if, for any two computable structures $\sA$ and $\sB$, the degree $\* d$ only computes an isomorphism between $\sA$ and $\sB$ if there is already a computable isomorphism between them.\footnote{Csima has also introduced the notion of \emph{lowness for categoricity} \cite{csimatalk}, though it has not been investigated in much depth as of yet.} 

The class of degrees that are low for isomorphism defies easy characterization. No degree comparable to $\* 0'$ is low for isomorphism except $\* 0$.  On the other hand, every 2-generic degree is \cite{fs-lowim}. Further work was done on this class by Franklin and Turetsky \cite{ft-1genlow} and then, once again, Franklin and Solomon \cite{fs-lowim20}, but these papers only suggest that the class is more complicated than had been initially thought. Nevertheless, it seems to be somewhat robust: Franklin and Turetsky have characterized these degrees as those that are low for paths \cite{ft-lowpaths}, and Franklin and McNicholl have characterized them as those that are low for isometric isomorphism \cite{fm-lowisom}.

Turning to highness in the context of computable structure theory, we say that a degree is high for isomorphism if it can compute an isomorphism between any two isomorphic computable structures.

\begin{dfn}
We call a degree $\* d$ {\em high for isomorphism} if for any two computable structures $\+ M$ and $\+ N$ with $\+ M \cong \+ N$, there is a $\* d$-computable isomorphism from $\+ M$ to $\+ N$.
\end{dfn}

Note that the degree of Kleene's $\+ O$ is high for isomorphism.  Indeed, $\+ O$ can compute a path through the tree of partial isomorphisms of any two computable structures for which such a path exists.  Thus, in analogy with the  $\Delta^0_2$ degrees, $\+O$ is the maximal object for isomorphism.

For a set $X$, we will take \[\+ O^X = \{ e : \text{$\{e\}^X$ is the graph of a binary relation that well-orders $\omega$}\}\] and $\+ O = \+ O^\emptyset$.  By standard results described, for instance, in \cite{Sacks1990}, this is equivalent to the usual definition involving notations for ordinals.

We also define a uniform version of highness for isomorphism in keeping with uniform versions of other concepts in computable structure theory \cite{dkk03}.

\begin{dfn}
We call a degree $\* d$ {\em uniformly high for isomorphism} if there is a $D \in \* d$ and a total computable $f$ such that for any computable structures $\+M_i \cong \+M_j$, the function $\{f(i,j)\}^D$ is an isomorphism from $\+M_i$ to $\+M_j$.
\end{dfn}

The primary focus of this paper is highness for isomorphism, but we also consider several other structural highness notions.  We begin in Section \ref{sec:high} with a discussion of the types of degrees that are high for isomorphism. In Section \ref{sec:scott}, we turn to Scott analysis in an attempt to characterize these degrees further, and in Section \ref{sec:descending}, we consider degrees that are high for descending sequences, that is, those that can compute a descending sequence in any computable ill-founded linear order. In Section \ref{sec:jumps}, we relate these notions to degrees that can compute jump structures on arbitrary Harrison orders. We then consider a structural equivalent of enumerability, ``reticence," and the reticent versions of all of the previously considered notions in Section \ref{sec:reticence}. In Section \ref{sec:rand_gen}, we make some brief remarks on the relationships between highness for isomorphism and randomness and genericity, and we conclude the paper with some open questions in Section \ref{sec:etc}.

\section{Highness and uniform highness for isomorphism}\label{sec:high}

We begin by noting the following connection between the degrees that are high for isomorphism and those that are uniformly high for isomorphism.

\begin{prop}\label{prop:double_high_is_uniform}
If $\* d$ is high for isomorphism, then $\* d''$ is uniformly high for isomorphism. 
\end{prop}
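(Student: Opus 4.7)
The plan is to use the fact that ``$\varphi_e^D$ is an isomorphism from $\+M_i$ to $\+M_j$'' is a $\Pi^0_2(D)$ condition, so that the double jump $D''$ of $D \in \*d$ can uniformly search for an appropriate $e$. This turns the non-effective ``high for isomorphism'' assumption into the uniform witness required for ``uniformly high for isomorphism,'' at the cost of two jumps.

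Fix $D \in \*d$, and note that $D'' \in \*d''$ and $D \leq_T D''$. For indices $i, j, e$, let $\Phi(i,j,e)$ be the statement that $\varphi_e^D$ is total and is an isomorphism from $\+M_i$ onto $\+M_j$. Totality of $\varphi_e^D$ is $\Pi^0_2(D)$, and modulo totality the remaining requirements---preservation of atomic and negated atomic formulas, injectivity, and surjectivity---are each expressible with a $\forall \exists$ quantifier pattern over a $D$-computable matrix, since the atomic diagrams of $\+M_i$ and $\+M_j$ are uniformly computable in $(i,j)$. Hence $\Phi(i,j,e)$ is $\Pi^0_2(D)$ uniformly in $(i,j,e)$, and is therefore uniformly decidable from $D''$.

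I then define the computable function $f$ so that $\{f(i,j)\}^{D''}$, on input $n$, uses $D''$ to locate the least $e$ with $\Phi(i,j,e)$ (stepping through $e = 0, 1, 2, \dots$ and asking $D''$ about each in turn) and then returns $\varphi_e^D(n)$, computing $\varphi_e^D$ via $D$ recovered from $D''$. Because $\*d$ is high for isomorphism, whenever $\+M_i \cong \+M_j$ some $D$-computable isomorphism exists, so some such $e$ exists and the search halts. The same least $e$ is located on every input $n$, so $\{f(i,j)\}^{D''}$ is a total $D''$-computable isomorphism from $\+M_i$ to $\+M_j$, which witnesses that $\*d''$ is uniformly high for isomorphism.

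There is no substantial obstacle in this argument; the only care required is in the arithmetical complexity calculation identifying ``being an isomorphism'' as a $\Pi^0_2(D)$ property, which is precisely the level at which uniformization becomes possible using $D''$. The same strategy does not obviously work with $\*d'$ in place of $\*d''$, since $D'$ only decides $\Sigma^0_1(D)$ questions, and it is natural to ask whether a single jump suffices.
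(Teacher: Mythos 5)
Your proof is correct and follows essentially the same route as the paper: identify ``$\{e\}^D$ is an isomorphism from $\+M_i$ to $\+M_j$'' as a $\Pi^0_2(D)$ property uniformly in $(i,j,e)$, and then let $D''$ search through the $e$'s for a witness, outputting $\{e\}^D$. The paper's argument is exactly this, with the same complexity bookkeeping (totality and surjectivity $\Pi^0_2$, injectivity and partial isomorphism $\Pi^0_1$).
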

\begin{proof}
Observe that $\{e\}^D$ being an isomorphism between $\+M_i$ and $\+M_j$ is a $\Pi^0_2(D)$ property of $e, i$ and $j$: being total and surjective are $\Pi^0_2$, and being injective and being a partial isomorphism are $\Pi^0_1$.  Thus, given $i$ and $j$, $D''$ can iterate through the $e$s until it finds one giving an isomorphism and then output $\{e\}^D$.
\end{proof}

Now we turn our attention to establishing a characterization of the degrees that are (uniformly) high for isomorphism and proceed to some existence results.

We observe that if $\* d$ is not high for isomorphism, then there is a $\sigmaii$ class of which it computes no element: the class of isomorphisms between some pair of isomorphic computable structures.\footnote{By a $\Sigma^1_1$ class, we mean a $\Sigma^1_1$ subset of $\omega^\omega$.}  This leads to the following definition inspired by the characterization of a $\Pi^0_1$ class as the set of paths through some computable tree. 

\begin{dfn}
We call a degree $\* d$ {\em high for paths} if for every nonempty $\Pi^0_1$ class of functions $\+ P$, the degree $\* d$ computes an element of $\+ P$. Similarly, a degree $\* d$ is called {\em uniformly high for paths} if there is a $D \in \* d$ and a total computable $f$ such that for every nonempty $\Pi^0_1$ class of functions $\+ P_i$, the function $\{f(i)\}^D$ is an element of $\+ P_i$.
\end{dfn}

As $\Sigma^1_1$ classes are uniformly projections of $\Pi^0_1$ classes in the sense that any $\Sigma^1_1$ class can be viewed as the projection on the first coordinate of the set of paths through a $\Pi^0_1$ subtree of $\left(\omega^\omega\right)^2$, we could replace $\Pi^0_1$ with $\Sigma^1_1$ in this definition without changing the class of degrees described. This gives us the following:

\begin{observation}
If $\* d$ is (uniformly) high for paths, then it (uniformly)  computes an element of every nonempty $\Sigma^1_1$ class, and conversely.
\end{observation}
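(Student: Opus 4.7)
The plan is to leverage the remark made in the paragraph just above the observation: every $\sigmaii$ subset $\sS$ of $\omega^\omega$ can be written as $\pi_1([T])$, the projection onto the first coordinate of the body of a $\Pi^0_1$ subtree $T$ of $(\omega^{<\omega})^2$, and this representation is effective. So from a $\sigmaii$ index for $\sS$ one can (totally and computably) produce a $\Pi^0_1$ index for a class $\sP \subseteq \omega^\omega$, where we identify $(\omega^\omega)^2$ with $\omega^\omega$ via the usual computable pairing, such that the image of $\sP$ under unpairing and first projection is exactly $\sS$.

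For the forward direction, I would argue as follows. Given a nonempty $\sigmaii$ class $\sS$, form the corresponding $\Pi^0_1$ class $\sP$ as above; since $\sS$ is nonempty, so is $\sP$. By (uniform) highness for paths, $\* d$ (uniformly) computes some $f \in \sP$, and then unpairing $f$ and taking the first coordinate yields a $\* d$-computable element of $\sS$. In the uniform case, the passage from a $\sigmaii$ index to a $\Pi^0_1$ index, together with the unpairing and projection maps, are all total computable, so composing them with the computable function witnessing uniform highness for paths produces the required total computable function witnessing uniform ``highness for $\sigmaii$ classes.''

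The converse direction is essentially trivial: every nonempty $\Pi^0_1$ class of functions is a nonempty $\sigmaii$ class, with a $\sigmaii$ index that can be obtained computably from its $\Pi^0_1$ index (indeed, a $\Pi^0_1$ formula is already a $\sigmaii$ formula). Hence any degree that (uniformly) computes a member of every nonempty $\sigmaii$ class certainly does so for every nonempty $\Pi^0_1$ class of functions.

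There is no real obstacle in this argument; the only step requiring a moment's care is the assertion that the translation from $\sigmaii$ indices to $\Pi^0_1$ indices for the associated tree is itself total computable, which is a standard consequence of the Kleene normal form for $\sigmaii$ relations. Everything else is routine pairing/unpairing.
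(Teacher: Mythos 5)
Your argument is correct and is essentially the paper's own: the observation is justified there by exactly the same remark that $\Sigma^1_1$ classes are uniformly projections of $\Pi^0_1$ classes, so one passes to the $\Pi^0_1$ class, computes a path, and projects, with the converse being the trivial inclusion of $\Pi^0_1$ classes among $\Sigma^1_1$ classes.
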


This allows us to make use, from time to time, of the following result (described, for instance, in \cite{Sacks1990}).

\begin{thm}[Gandy Basis Theorem] Every nonempty $\Sigma^1_1$ class has a member $f$ such that $\omega_1^f = \omega_1^{ck}$. \end{thm}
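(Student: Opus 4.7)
The plan is to prove this by contradiction, invoking the Kreisel--Spector $\sigmaii$-boundedness principle---any $\sigmaii$-definable family of codes for well-orderings on $\omega$ has order types uniformly bounded strictly below $\omega_1^{ck}$. First I would reformulate: any nonempty $\sigmaii$ class $\+ C \subseteq \omega^\omega$ can be written as $p[T]$ for a computable tree $T \subseteq (\omega \times \omega)^{<\omega}$, where $p[T] = \{X : \exists Y\ (X,Y) \in [T]\}$. Since $\omega_1^X \leq \omega_1^{X \oplus Y}$, it suffices to produce a branch $(X,Y) \in [T]$ with $\omega_1^{X \oplus Y} = \omega_1^{ck}$. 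Suppose for contradiction that every such branch satisfies $\omega_1^{X \oplus Y} > \omega_1^{ck}$, so that along each path of $T$ the join $X \oplus Y$ computes a well-ordering whose order type exceeds every computable ordinal.

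The next step is to extract from this assumption a $\sigmaii$-definable family of linear orderings on $\omega$ that contains well-orderings of order type at least $\omega_1^{ck}$. Concretely, I would augment $T$ to an auxiliary computable tree $T^{\ast}$ whose branches are triples $(X,Y,L)$ with $(X,Y) \in [T]$ and $L$ encoding the graph of $\{e\}^{X \oplus Y}$ for an index $e$; the projection of $[T^{\ast}]$ onto the $L$-coordinate is then a $\sigmaii$ class of linear orderings. Under the contradictory assumption, for a suitable $e$ this projection must contain well-orderings of order type at least $\omega_1^{ck}$, in direct contradiction with $\sigmaii$-boundedness.

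The principal obstacle is the complexity bookkeeping: being a well-ordering is $\piii$, so the subclass of genuine well-orderings inside the ambient $\sigmaii$ family is a priori only $\Delta^1_2$, which blocks a naive application of boundedness. The trick is to avoid separating the well-orderings out and instead apply boundedness to the whole $\sigmaii$ family after first showing that under our assumption, for every computable ordinal $\alpha$ the family contains a well-ordering of order type at least $\alpha$; once this cofinality witness is packaged as a uniformly $\sigmaii$-definable collection of initial segments inside $[T^{\ast}]$, it is exactly the configuration forbidden by Kreisel--Spector boundedness. Getting the initial-segment-cofinal witness formalized correctly---with all projections, quantifiers over $e$, and well-foundedness conditions accounted for---is where the real work lies.
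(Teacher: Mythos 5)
The paper itself does not prove this theorem---it is quoted as a classical result with a citation to Sacks---so the only question is whether your argument stands on its own, and it does not: the appeal to $\Sigma^1_1$-boundedness is not legitimate as you deploy it. Kreisel--Spector boundedness applies only to a $\Sigma^1_1$ family \emph{all} of whose members are well-orderings; it says nothing about a $\Sigma^1_1$ family of linear orders that merely contains well-orderings of unbounded (or even $\ge \omega_1^{ck}$) order type---indeed the set of all linear orderings of $\omega$ is arithmetic and contains well-orders of every countable type, with no contradiction. Your projection of $[T^\ast]$ onto the $L$-coordinate is a family of the latter kind: for a fixed index $e$, different branches $(X,Y)$ will in general make $\{e\}^{X\oplus Y}$ partial, non-linear, or ill-founded, and the contradiction hypothesis only supplies, for each branch, \emph{some} $e$ (varying with the branch) whose output is a long well-order. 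Extracting the well-ordered members is $\Pi^1_1$, as you note, and your proposed remedy---applying boundedness ``to the whole family'' after exhibiting cofinally long well-orders inside it, or passing to initial segments inside $[T^\ast]$---does not repair this: initial segments below points of the ill-founded part are again ill-founded, and restricting to well-founded initial segments reintroduces the $\Pi^1_1$ condition. So the step ``in direct contradiction with $\Sigma^1_1$-boundedness'' fails, and this is the heart of the proof.

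The missing idea is to localize by index and iterate, in the style of Gandy--Harrington forcing. For a nonempty $\Sigma^1_1$ condition $A$ and a single $e$, the set $\{X \in A : \{e\}^X \text{ is not a total linear order, or has an infinite descending sequence}\}$ is again $\Sigma^1_1$; either it is nonempty, in which case one shrinks $A$ to it (forcing $\{e\}^X \notin \mathrm{WO}$), or it is empty, in which case \emph{every} $X \in A$ yields a well-order, so boundedness genuinely applies and gives a single bound $\alpha_e < \omega_1^{ck}$ valid on all of $A$. Carrying this out for every $e$ along a decreasing sequence of nonempty $\Sigma^1_1$ conditions---while also carrying finite stems of the underlying tree representations so that the intersection is nonempty, a nontrivial point since $\Sigma^1_1$ sets are not closed---produces a member $X$ with $\omega_1^X = \omega_1^{ck}$. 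Your one-shot global contradiction cannot substitute for this dichotomy-plus-fusion structure (alternatively one can argue via the Gandy selection theorem), so as written the proposal has a genuine gap at its central step.
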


We now present a characterization of the degrees that are high for isomorphism in terms of highness for paths; we will need the following notation for the proof.

\begin{notation}\label{TreeStar}
If $S, T \subseteq \omega^{<\omega}$ are trees, we define $S\star T = \{ (\sigma_0, \sigma_1) : \sigma_0 \in S \ \& \ \sigma_1 \in T \ \& \ |\sigma_0| = |\sigma_1|\}$ and give this a tree structure by $(\sigma_0, \sigma_1) \subseteq (\tau_0, \tau_1)$ if $\sigma_i \subseteq \tau_i$ for $i < 2$.  
\end{notation}

Note that we can identify $S\star T$ with a tree in $\omega^{<\omega}$ in an effective fashion.

\begin{prop}\label{prop:high_for_iso_vs_paths}
A degree is (uniformly) high for isomorphism if and only if it is (uniformly) high for paths.
\end{prop}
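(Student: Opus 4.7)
The plan is to prove both implications, treating the uniform and non-uniform versions in parallel so that the uniformity transfers along the reductions.

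For the direction ``high for paths implies high for isomorphism,'' the idea is to identify the set of isomorphisms $\+M_i \to \+M_j$ with a $\Pi^0_1$ class. I would use the back-and-forth tree $T_{i,j} \subseteq \omega^{<\omega}$ whose nodes at level $2k$ encode an injective partial map preserving the atomic diagrams of $\+M_i$ and $\+M_j$ with $\{0, \ldots, k-1\}$ forced into the domain, and at level $2k+1$ additionally with $\{0, \ldots, k-1\}$ forced into the range. The tree $T_{i,j}$ is uniformly computable from $(i,j)$; a classical back-and-forth argument shows $[T_{i,j}] \neq \emptyset$ exactly when $\+M_i \cong \+M_j$; and the paths correspond effectively and bijectively to isomorphisms. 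Feeding $T_{i,j}$ into the assumed (uniform) path-computing procedure produces a (uniform) isomorphism-computing procedure.

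For the reverse direction, the plan is to code a given $\Pi^0_1$ class into the isomorphisms of two computable structures in such a way that an isomorphism uniformly computes a path. Given an index $e$ for a computable tree $T_e \subseteq \omega^{<\omega}$ with $[T_e] \neq \emptyset$, I would uniformly construct computable structures $\+A_e$ and $\+B_e$ (isomorphic exactly when $[T_e]$ is nonempty) and a computable functional $\Phi$ such that $\Phi^f \in [T_e]$ whenever $f \colon \+A_e \to \+B_e$ is an isomorphism. The standard way is to take a language with countably many unary predicates together with a binary relation, building a rigid ``spine'' whose level-$n$ position is marked by $P_n$ and attaching at each level a collection of distinguishable ``label gadgets'' indexed by the values the path might take there, wired so that choices at consecutive levels can be extended to an isomorphism only when they come from a common node of $T_e$.

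The main obstacle will be the coding in the reverse direction, where three conditions must hold simultaneously: the structures must actually be isomorphic whenever $[T_e] \neq \emptyset$; any isomorphism must record, via $\Phi$, a sequence that stays on $T_e$ rather than getting trapped on a dead branch; and the entire construction must be uniform in $e$ so that uniform highness transfers. Once such a coding is in place, composing the assumed (uniform) isomorphism function with $\Phi$ yields the desired (uniform) path function, giving both equivalences.
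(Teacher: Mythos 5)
Your first direction is correct and is essentially the paper's: the tree of finite partial isomorphisms, with domain and range forced in at alternating levels, is uniformly computable in $(i,j)$, its infinite paths are exactly the isomorphisms from $\+M_i$ to $\+M_j$, and so a (uniform) path-computing procedure yields a (uniform) isomorphism-computing procedure. (No back-and-forth argument is even needed; paths and isomorphisms correspond directly.)

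The reverse direction is where the content of the proposition lies, and your proposal leaves it unresolved: the ``label gadget'' wiring you describe is a restatement of the goal rather than a construction, and the obstacle you flag --- an isomorphism getting trapped on a dead branch --- is exactly what local wiring cannot handle, since extendibility of a node of $T_e$ to an infinite path is a $\Sigma^1_1$ condition, not a computable one, so gadgets at dead nodes cannot be made locally distinguishable from gadgets at extendible nodes. There is also a structural tension you do not address: if $\+A_e$ mirrors the coded tree itself, the identity map is an isomorphism and computes nothing; if $\+A_e$ is a fixed template (say mirroring $\omega^{<\omega}$), then the isomorphism type of the coded structure depends on the well-founded ``junk'' hanging off the dead branches of $T_e$, and the two structures will in general fail to be isomorphic even when $[T_e]\neq\emptyset$. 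The paper resolves precisely this with rank-saturated trees of infinite rank: there is a computable rank-saturated tree $S$ of infinite rank with a computable path, $T\star S$ is again a computable rank-saturated tree of infinite rank whenever $T$ has a path, and any two such trees are isomorphic --- saturation is what absorbs the dead-branch junk. Crucially, the path is then not traced level-by-level by matching gadgets; it is extracted by pushing the \emph{known computable path} of $S$ through the isomorphism into $T\star S$ and projecting to $T$, and this planted computable object on one side is the missing idea in your sketch (the paper's Harrison-order theorem gives an alternative coding into linear orders, using the image of a computably embedded copy of $\bQ$ in the same role). Until you supply such an absorption device together with a planted computable object to push through the isomorphism, the second direction has a genuine gap.
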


\begin{proof}
Suppose that $\* d$ is (uniformly) high for paths.  Given two isomorphic computable structures, we construct the tree $\sI$ of partial isomorphisms, noting that this tree is uniformly computable in the two structures.  Now $\* d$ computes a path through $\sI$, which is an isomorphism.

The other direction follows from the proof of Theorem~\ref{thm:highforisoandharrison}, but we give a proof here as well.  The reader familiar with the interplay between trees and linear orders may recognize that these are substantially the same proof, but as trees are more general objects than linear orders, this version is somewhat simpler.  Let $\*d$ be (uniformly) high for isomorphism, and let $\sT$ be a nonempty $\Pi^0_1$ class of functions generated by a computable tree $T$.

Our proof is based on rank-saturated trees of infinite rank \cite{CalvertKnightMillar2006,FFHKMM}; we will require the following facts:
\begin{enumerate}
    \item There is a computable rank-saturated tree $S$ of infinite rank that has a computable path.  (This follows from the proof of Lemma 1 in \cite{FFHKMM}, beginning with a Harrison order with a computable descending sequence; such an order is constructed in the proof of Theorem~\ref{thm:highforisoandharrison}.) 
    \item Any two computable rank-saturated trees of infinite rank are isomorphic.  (Proposition 2, \cite{FFHKMM})
    \item If $S$ is a computable rank-saturated tree of infinite rank and $T$ is any computable tree with a path, then $T\star S$ is a computable rank-saturated tree of infinite rank.  (Proposition 1, \cite{FFHKMM})
\end{enumerate}

Then $S$ and $T\star S$ are our two computable structures in some appropriate language.  Let $f$ be a computable path through $S$.  Since $\*d$ can compute an isomorphism between $S$ and $T\star S$, we have that $\*d$ can carry $f$ over to a path in $T\star S$ and then project that down to a path in $T$ as required.  Furthermore, this process is uniform in the isomorphism.
\end{proof}

\subsection{Relationship between highness for isomorphism and benchmark Turing degrees}

We first note that the degrees that are high for isomorphism are necessarily strong:

\begin{prop}\label{prop:high_for_iso_above_hyp}
If $\* d$ is high for isomorphism, then $\* d$ computes every $\Delta^1_1$ set.
\end{prop}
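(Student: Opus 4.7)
The plan is to leverage Proposition~\ref{prop:high_for_iso_vs_paths} together with the Observation immediately following the definition of highness for paths, so that we can treat a degree high for isomorphism as one that computes an element of every nonempty $\Sigma^1_1$ class of functions. Once we have that, to compute an arbitrary $\Delta^1_1$ set $A$, it suffices to exhibit $\{\chi_A\}$ as a nonempty $\Sigma^1_1$ subclass of $\omega^\omega$, and then any element it computes from this singleton class is exactly $\chi_A$.

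First I would invoke Proposition~\ref{prop:high_for_iso_vs_paths} to conclude that $\mathbf d$ is high for paths, and then the Observation to conclude that $\mathbf d$ computes some element of every nonempty $\Sigma^1_1$ class of functions. Next, given a $\Delta^1_1$ set $A \subseteq \omega$, I would consider its characteristic function $\chi_A \in \omega^\omega$ and verify that the singleton $\{\chi_A\}$ is $\Sigma^1_1$ as a subclass of $\omega^\omega$. This is essentially immediate: membership in this class is captured by the formula
\[
f \in \{\chi_A\} \iff \forall n\,\bigl( (f(n) = 1 \wedge n \in A) \vee (f(n) = 0 \wedge n \notin A)\bigr),
\]
and since $A$ is $\Delta^1_1$, the matrix is $\Delta^1_1$ in $f$ and the outer number quantifier preserves $\Sigma^1_1$ (indeed $\Delta^1_1$) complexity. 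Thus $\{\chi_A\}$ is a nonempty $\Sigma^1_1$ class, so $\mathbf d$ computes its unique element $\chi_A$, which gives $A \leq_T \mathbf d$ as required.

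There is essentially no technical obstacle here; the work has already been done in Proposition~\ref{prop:high_for_iso_vs_paths} and in the equivalence between $\Pi^0_1$ and $\Sigma^1_1$ bases. The only point worth stating cleanly is the reduction of an arbitrary $\Delta^1_1$ set to a $\Sigma^1_1$ singleton class, which I would verify in one or two lines.
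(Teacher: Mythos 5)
Your proposal is correct and follows essentially the same route as the paper: the paper's proof is simply that $\{X\}$ is a $\Sigma^1_1$ class for $\Delta^1_1$ sets $X$, relying (as you do) on Proposition~\ref{prop:high_for_iso_vs_paths} and the observation that highness for paths yields elements of nonempty $\Sigma^1_1$ classes. Your explicit verification that the singleton is $\Sigma^1_1$ is just a spelled-out version of the same one-line argument.
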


\begin{proof}
If $X$ is $\Delta^1_1$, then $\{X\}$ is a $\Sigma^1_1$ class, and thus $\* d$ computes $X$.
\end{proof}

Another perspective reinforces this insight.

\begin{prop}\label{prop:jumps_of_high_are_powerful} Kleene's $\sO$ is arithmetical over any degree high (or uniformly high) for isomorphism. Furthermore, we have the following:
\begin{enumerate}
\item If $\*d$ is high for isomorphism, then $\+O$ is $\Pi^0_3(\*d)$ and thus $\*d^{(3)} \ge \+O$.
\item If $\*d$ is uniformly high for isomorphism, then $\+O$ is $\Sigma^0_2(\*d)$ and thus $\*d^{(2)} \ge \+O$.
\end{enumerate}
\end{prop}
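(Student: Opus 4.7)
The plan is to trade ``(uniformly) high for isomorphism'' for ``(uniformly) high for paths'' via Proposition~\ref{prop:high_for_iso_vs_paths}, and then realize $\+O$ as the set of indices $e$ for which a uniformly computable tree $T_e \subseteq \omega^{<\omega}$ has empty body. Since $\+O$ is $\Pi^1_1$, such a family $(T_e)_{e \in \omega}$ exists: concretely, the infinite branches of $T_e$ can be made to code either a descending sequence in the relation $\{e\}$ (through values seen to converge by stage $|\sigma|$) or a witness to the failure of $\{e\}$ to be a total linear order (absorbing the $\Sigma^0_2$ failure of totality as additional branches). Either way, $[T_e] = \emptyset$ iff $e \in \+O$, and an index for $[T_e]$ as a $\Pi^0_1$ class is computable from $e$.

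For the uniform case, fix $D \in \*d$ and a computable function $h$ such that $\{h(e)\}^D \in [T_e]$ whenever $[T_e] \neq \emptyset$; this is obtained by composing the index map $e \mapsto$ (index of $[T_e]$) with the uniform high-for-paths witness. Then $e \in \+O$ iff $\{h(e)\}^D$ fails to be a path through $T_e$. The assertion that $\{h(e)\}^D$ \emph{is} a path through $T_e$ is $\Pi^0_2(D)$, as it asserts totality (a $\Pi^0_2(D)$ condition) together with each initial segment lying in the computable tree $T_e$ (a $\Pi^0_1(D)$ condition given totality). Its negation is therefore $\Sigma^0_2(D)$, yielding $\+O \in \Sigma^0_2(\*d)$ and hence $\*d^{(2)} \geq \+O$.

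For the non-uniform case, $[T_e]$ is nonempty iff some $D$-computable function is a path in $T_e$, so $e \in \+O$ iff for every index $j$, $\{j\}^D$ is not a path through $T_e$. A universal quantifier over $j$ applied to the $\Sigma^0_2(D)$ condition from the uniform analysis yields a $\Pi^0_3(D)$ condition, so $\+O \in \Pi^0_3(\*d)$ and $\*d^{(3)} \geq \+O$. The arithmeticity of $\+O$ over $\*d$ asserted in the opening sentence is then immediate from (1). The only technical subtlety worth flagging is the verification that the tree family $T_e$ correctly encodes membership in $\+O$ even when $\{e\}$ is not a total linear order --- a standard bookkeeping step that absorbs $\Sigma^0_2$-level failure conditions into extra branches and introduces no new ideas.
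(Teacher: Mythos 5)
Your proof is correct, but it takes a somewhat different route from the paper's. The paper argues directly from the isomorphism relation: it uses the (folklore) $\Sigma^1_1$-completeness of $X = \{(i,j) : \+M_i \cong \+M_j\}$ together with the observation that ``$\{e\}^D$ is an isomorphism from $\+M_i$ to $\+M_j$'' is $\Pi^0_2(D)$; keeping or stripping the leading $\exists e$ then gives the $\Pi^0_3(\*d)$ and $\Sigma^0_2(\*d)$ bounds for $\+O$. You instead pass to (uniform) highness for paths via Proposition~\ref{prop:high_for_iso_vs_paths} and use the $\Pi^1_1$ normal form of $\+O$ as ill-foundedness of a uniformly computable family of trees $(T_e)$, with ``$\{h(e)\}^D$ is a path through $T_e$'' playing the role of the $\Pi^0_2(D)$ matrix; the quantifier counting is then identical to the paper's. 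What each approach buys: the paper's argument is essentially self-contained (modulo the cited completeness of the isomorphism problem) and does not invoke Proposition~\ref{prop:high_for_iso_vs_paths}, whose nontrivial direction rests on the rank-saturated-tree machinery; your argument avoids citing $\Sigma^1_1$-completeness of isomorphism but leans on that heavier equivalence --- legitimately, since Proposition~\ref{prop:high_for_iso_vs_paths} precedes this proposition and its proof is independent of it, so there is no circularity. Your deferral of the construction of the trees $T_e$ (absorbing the arithmetic failure conditions as extra branches) to standard bookkeeping is fine; it is the usual normal form for $\Pi^1_1$ sets, uniformly in $e$.
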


\begin{proof}
Note that $X = \{ (i, j) : \+M_i \cong \+M_j\}$ is a $\Sigma^1_1$-complete set (folklore, see \cite{gk02}).  However, if $\*d$ is high for isomorphism with $D \in \*d$, then $(i,j) \in X$ if and only if $\exists e\, [\{e\}^D: \+M_i \cong \+M_j]$, and the matrix of the right-hand side is $\Pi^0_2(D)$.  Then $\Sigma^1_1$ sets are $\Sigma^0_3(\*d)$, making $\+O \in \Pi^0_3(\*d)$.

If, instead, $\*d$ is uniformly high for isomorphism as witnessed by $D$ and $f$, then we can strip off the opening existential quantifier from the previous argument: $(i, j) \in X$ if and only if $\{f(i,j)\}^D: \+M_i \cong \+M_j$.  In this case, $\Sigma^1_1$ sets are $\Pi^0_2(\*d)$, making $\+O \in \Sigma^0_2(\*d)$.
\end{proof}

We now progress toward showing the existence of degrees strictly below $\sO$ which are high for isomorphism.  To this end, we use the following known strengthening of the Gandy Basis Theorem:

\begin{lem}[Folklore, see Exercise 2.5.6 in~\cite{BookChongYu2015}]
If $\+ Q \subseteq \omega^\omega$ is a nonempty $\Sigma^1_1$ class, there is an $X \in \+Q$ with $\+ O^X \le_T \+ O$.  Furthermore, an index for a witnessing Turing reduction can be uniformly obtained from an index for $\+Q$.
\end{lem}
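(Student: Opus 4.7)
The plan is to refine the proof of the Gandy Basis Theorem so that both the chosen $X \in \+Q$ and a Turing reduction witnessing $\+O^X \le_T \+O$ are produced uniformly in a $\Sigma^1_1$-index for $\+Q$.

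First, given an index $i$ for $\+Q$, I would write $\+Q$ as the projection of $[T_i]$ for a computable tree $T_i$ on $\omega^{<\omega}\times\omega^{<\omega}$ obtained effectively from $i$.  A Spector--Gandy reflection argument inside $L_{\omega_1^{ck}}$ then selects a distinguished branch $(X,Y)\in[T_i]$ with $\omega_1^{X\oplus Y}=\omega_1^{ck}$---for instance, the leftmost such branch under an $\+O$-decidable ordering of $\omega^{\omega}$---and yields, uniformly in $i$, an index for a reduction $X\le_T \+O$.

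Second, because $\omega_1^X=\omega_1^{ck}$, every $X$-recursive well-ordering has order type strictly below $\omega_1^{ck}$.  Spector boundedness then makes $\+O^X$ decidable from $\+O$: on input $e$, use the reduction $X\le_T \+O$ from the previous step to simulate $\{e\}^X$, and then search the $\+O$-recursive enumeration of the computable well-orderings for an order-preserving embedding of $\{e\}^X$ into some $|a|$ with $a\in\+O$.  Such an embedding exists iff $\{e\}^X$ is well-founded, so the resulting $\+O$-procedure computes $\chi_{\+O^X}$, and its index is extractable effectively from $i$.  The main obstacle is the effectivization of the reflection step: while Gandy's theorem abstractly supplies $X$ with $\omega_1^X=\omega_1^{ck}$ and Spector boundedness abstractly supplies the reduction, obtaining a single $s$-$m$-$n$-style procedure that outputs, from $i$, both an index for $X$ as a set reducible to $\+O$ and an index for the Turing functional witnessing $\+O^X\le_T\+O$ requires carefully threading $\Sigma^1_1$-uniformity through the whole construction---this bookkeeping, rather than any new conceptual ingredient, is where the technical work of the proof lies.
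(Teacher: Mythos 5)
Your overall plan---first choose $X \in \+Q$ with $\omega_1^X = \omega_1^{ck}$ and $X \le_T \+O$, then argue directly that $\+O^X \le_T \+O$---has a genuine gap in the second step. The procedure you describe for deciding $e \in \+O^X$ (``simulate $\{e\}^X$ and search for an order-preserving embedding into some computable well-order'') is not an $\+O$-computable decision procedure. An embedding is an infinite object: the statement ``$\{e\}^X$ embeds into $\{n\}^\emptyset$'' is $\Sigma^1_1(X)$, and the natural way to certify it (the canonical comparison map, built by effective transfinite recursion along $\{n\}^\emptyset$) needs transfinite jumps of $X$; knowing only $X \le_T \+O$ provides none of these, since $\+O$ need not even compute $X'$. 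Worse, even granting each such test, your search has no way to halt with the answer ``no'': when $\{e\}^X$ is ill-founded there is no finitary witness, so at best you would show $\+O^X$ is c.e.\ in $\+O$, not $\+O^X \le_T \+O$. In fact the properties your first step secures ($X \in \+Q$, $X \le_T \+O$, $\omega_1^X = \omega_1^{ck}$) amount to lowness for $\omega_1^{ck}$ plus computability from $\+O$, which is weaker than the conclusion $\+O^X \le_T \+O$ (strong hyperlowness, in the paper's later terminology); this is precisely why the lemma is not a formal consequence of the Gandy and Kleene basis theorems, and why no amount of $s$-$m$-$n$ bookkeeping at the end can close the gap. (Your first step is also unjustified as stated: steering a leftmost-branch search toward branches with $\omega_1^{X\oplus Y} = \omega_1^{ck}$ requires $\+O$ to answer whether a given subtree has a branch that is low for $\omega_1^{ck}$, and expressing that as a $\Sigma^1_1$ condition already requires the very device described next.)

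The missing idea---and the paper's actual route---is to fold the decision data for $\+O^X$ into the basis object itself. Consider the auxiliary $\Sigma^1_1$ class $\+R$ of pairs $(X, f)$ with $X \in \+Q$ and $f \in (\omega \cup \{-1\})^\omega$ such that $f(e) = -1$ implies $\{e\}^X$ is ill-founded, and $f(e) = n \ge 0$ implies $\{e\}^X \cong \{n\}^\emptyset$. The Gandy Basis Theorem is used only to show $\+R \neq \emptyset$ (any $X \in \+Q$ with $\omega_1^X = \omega_1^{ck}$ admits such an $f$); then the Kleene basis theorem---every nonempty $\Sigma^1_1$ class has a member computable from $\+O$, uniformly in an index for the class---yields $(X, f) \le_T \+O$. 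Now $e \in \+O^X$ if and only if $f(e) \neq -1$ and $f(e) \in \+O$, which $\+O$ decides outright, and uniformity is inherited from the uniform passage from $\+Q$ to $\+R$ together with the uniform basis. Your proposal never produces such a witness $f$ (nor any substitute for it), and that is exactly where it breaks.
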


\begin{proof}
Let
\begin{align*}
\+ R = \{ (X, f) \in \+Q \times (\omega\cup \{-1\})^\omega &: \text{if $f(e) = -1$, then $\{e\}^X$ is ill-founded,}\\ & \text{and if $f(e) = n > -1$, then $\{e\}^X \cong \{n\}^\emptyset$}\}.
\end{align*}
Note that $\+R$ is $\Sigma^1_1$.  In order to show that $\+R$ is nonempty via the Gandy Basis Theorem, we fix $X \in \+Q$ with $\omega_1^X = \omega_1^{ck}$.  Then for every $e$ such that $\{e\}^X$ is well founded, there is an $n$ with $\{e\}^X \cong \{n\}^\emptyset$, so we define $f(e) = n$ for such an $n$.  For $e$ such that $\{e\}^X$ is ill founded, we define $f(e) = -1$.  Then $(X, f) \in \+R$.

As $\+R$ is a nonempty $\Sigma^1_1$ class, there is some $(X, f) \in \+R$ computable from $\+O$.  Then $e \in \+O^X$ if and only if  $-1 < f(e) \in \+O$, and so $\+O^X \le_T \+O$.  Furthermore, the process for constructing the Turing reduction is uniform from an index for $\+Q$.
\end{proof}

Since there is a Turing functional $\Phi$ with $\Phi^{\+ O^X} = X$ for all $X$, we immediately get $X \le_T \+ O$ uniformly for the $X$ of the above lemma.  This also follows from the construction.

Note that our construction relativizes to provide the following:
\begin{lem}
For any oracle $A$, if $\+Q$ is a non-empty $\Sigma^1_1(A)$ class, there is an $X \in \+Q$ with $X \le_T \+O^A$ and $\+O^{A,X} \le_T \+ O^A$, and indices for these reductions can be found uniformly from an index for $\+Q$ independently of $A$.
\end{lem}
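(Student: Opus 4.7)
The plan is to mirror the proof of the preceding (unrelativized) lemma step-by-step, with the oracle $A$ carried as a parameter throughout, and to verify that every construction is uniform in $A$ so that $A$ never appears in the resulting indices.

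First I would record the relativizations of the two basis results actually used: (i) the Gandy Basis Theorem relativizes to assert that every nonempty $\Sigma^1_1(A)$ class has a member $X$ with $\omega_1^{A,X} = \omega_1^A$; and (ii) the folklore basis theorem used in the preceding lemma relativizes to assert that every nonempty $\Sigma^1_1(A)$ class has a member computable from $\+O^A$, with the index of the reduction obtainable uniformly from a $\Sigma^1_1$-index of the class, independent of $A$. Both facts come from inspecting the standard proofs, which treat $A$ purely as an oracle.

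Next, I would define the $\Sigma^1_1(A)$ class
\[
\+R = \{ (X,f) \in \+Q \times (\omega\cup\{-1\})^\omega : \forall e\,[(f(e)=-1 \Rightarrow \{e\}^{A,X} \text{ is ill-founded}) \wedge (f(e)=n\ge 0 \Rightarrow \{e\}^{A,X} \cong \{n\}^A)]\},
\]
which is constructed uniformly in $A$ and a $\Sigma^1_1$-index for $\+Q$. To see $\+R \ne \emptyset$, apply the relativized Gandy Basis Theorem to $\+Q$ and pick $X\in\+Q$ with $\omega_1^{A,X}=\omega_1^A$. Then every well-founded $\{e\}^{A,X}$ has rank below $\omega_1^A$ and so is isomorphic to some $\{n\}^A$; defining $f(e)=n$ on those $e$ and $f(e)=-1$ otherwise produces an element of $\+R$.

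Now apply the relativized basis theorem (ii) to $\+R$ to obtain some $(X,f) \le_T \+O^A$ with $X \in \+Q$. Then $X \le_T \+O^A$ is immediate, and since $e \in \+O^{A,X}$ iff $f(e) \ne -1$ and $f(e) \in \+O^A$, we get $\+O^{A,X} \le_T f \oplus \+O^A \le_T \+O^A$. Because the formation of $\+R$ from $\+Q$ is uniform, the application of (ii) is uniform, and the final reductions from $\+O^A$ to $X$ and to $\+O^{A,X}$ are read off uniformly from $f$, the indices for both reductions depend only on the given $\Sigma^1_1$-index of $\+Q$ and not on $A$.

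The main obstacle is the uniformity bookkeeping rather than any new idea: one must be sure that the ``uniformly from an index'' claim in the preceding lemma is proved by a construction in which the oracle appears only through oracle queries, so that the same index serves for every $A$. Inspecting the proof of that lemma confirms this, so the relativization goes through without further work.
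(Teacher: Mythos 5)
Your proposal is correct and is essentially the paper's own argument: the paper proves this lemma simply by observing that the construction of the preceding (unrelativized) lemma relativizes, and you carry out exactly that relativization — the class $\+R$ of pairs $(X,f)$ with $\{e\}^{A,X}$ compared against $A$-computable well-orders, nonemptiness via the relativized Gandy Basis Theorem, and a member $(X,f)\le_T \+O^A$ via the relativized Kleene basis theorem, with the uniformity coming from the oracle appearing only through queries. No substantive difference from the paper's route.
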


We will also need some basic results about pointed perfect trees. 

\begin{dfn}
A {\em pointed perfect tree} is a function $f: 2^{<\omega} \to 2^{<\omega}$ such that $\sigma \subseteq \tau$ if and only if $f(\sigma) \subseteq f(\tau)$ and such that for every $X \in 2^\omega$, we have that $f(X) = \bigcup_n f(X\uhr_n)$ computes $f$.

A {\em uniformly pointed perfect tree} is a pointed perfect tree $f$ such that there is a Turing functional $\Phi$ with $\Phi^{f(X)} = f$ for all $X \in 2^\omega$.

If $f$ is a pointed perfect tree and $Y \in 2^\omega$, we define $g = f\oplus Y$ by
\[
g(\sigma) = f(\sigma \oplus Y\uhr_{|\sigma|}), 
\]
and if $f$ and $g$ are pointed perfect trees, we define $g \le f$ if $\text{range}(g) \subseteq \text{range}(f)$.
\end{dfn}

Some explanation may be in order on the terminology.  The literal ``tree'' in question is the downward closure of the range of the function $f$, and under the conditions described, this tree is, in fact, perfect in the sense that there is splitting beyond any finite node.  In several older papers the authors have consulted that use this terminology, there seems to be no clear explanation of why the term ``pointed" has the meaning it is given here.

\begin{lem}
If $f$ is a (uniformly) pointed perfect tree, then the following hold:
\begin{enumerate}
    \item For any $Y \in 2^\omega$, $g = f\oplus Y$ is a (uniformly) pointed perfect tree,
    \item $Y \le_T g(X)$ for every $X \in 2^\omega$, and 
    \item $Y \le_T g$.
\end{enumerate}\end{lem}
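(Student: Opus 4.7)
The plan is to establish the three claims in order, with (1) doing most of the work and (2) and (3) following quickly from the machinery built along the way.

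First I will verify the defining biconditional for $g$ at the level of strings, namely $\sigma \subseteq \tau$ if and only if $g(\sigma) \subseteq g(\tau)$. This is immediate from the analogous property for $f$ together with the elementary observation that $\sigma \subseteq \tau$ if and only if $\sigma \oplus Y\uhr_{|\sigma|} \subseteq \tau \oplus Y\uhr_{|\tau|}$. Taking unions of initial segments along any $X \in 2^\omega$ then yields the key identity $g(X) = f(X \oplus Y)$ that drives the rest of the argument.

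The heart of the proof is a decoding step. Given $g(X) = f(X \oplus Y)$, the pointed property of $f$ provides $f$ itself as a $g(X)$-computable function. Using $f$ and $g(X)$ one then recovers $X \oplus Y$: at each length $n$, search for a $\sigma \in 2^n$ with $f(\sigma) \subseteq g(X)$, noting that such a $\sigma$ is unique because two distinct candidates of the same length would have $\subseteq$-comparable $f$-images and hence be $\subseteq$-comparable themselves by the biconditional for $f$, forcing equality. Projecting $X \oplus Y$ onto its odd bits yields $Y$, which establishes (2). Moreover, once $f$ and $Y$ are both $g(X)$-computable, the definition $g(\sigma) = f(\sigma \oplus Y\uhr_{|\sigma|})$ shows that $g \le_T g(X)$, completing the pointedness of $g$ and thus (1). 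In the uniform case, the entire pipeline --- apply the fixed functional $\Phi$ witnessing uniform pointedness of $f$ to obtain $f$, then run the search to extract $X \oplus Y$ and hence $Y$, then recompute $g$ from $f$ and $Y$ --- is performed by a single Turing functional $\Psi$ with $\Psi^{g(X)} = g$ for all $X \in 2^\omega$, since none of the steps depends on $X$ in a non-uniform way.

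Finally, (3) drops out of (2) by taking $X = 0^\omega$: the element $g(0^\omega) = \bigcup_n g(0^n)$ is manifestly $g$-computable, and (2) applied at $0^\omega$ gives $Y \le_T g(0^\omega) \le_T g$. I do not foresee a real obstacle; the only mildly delicate point is confirming that the decoding is effective uniformly in $g(X)$, but since at each level only finitely many $\sigma$ need to be tested for the unique prefix-image property, the procedure is plainly computable from $f$ and $g(X)$ alone, and uniformity across $X$ in the uniformly pointed case is automatic.
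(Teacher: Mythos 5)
Your proof is correct and follows essentially the same route as the paper: establish $g(X)=f(X\oplus Y)$, use pointedness of $f$ to get $f\le_T g(X)$, decode $X\oplus Y$ (hence $Y$) from the pair $(f(X\oplus Y),f)$, recompute $g$ from $f$ and $Y$, and obtain (3) via $g(0^\omega)$. The only difference is that you spell out the uniqueness-of-prefix decoding argument that the paper leaves implicit, which is fine.
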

\begin{proof}
Certainly $g$ is a function with the necessary extension condition.  It remains in point 1 to show the (uniform) computability of $g$ from each $g(X)$.  For any $X$, we have $g(X) = f(X\oplus Y)$, so $g(X)$ computes $f$ by assumption.  The pair $(g(X),f) = (f(X\oplus Y), f)$ computes $X\oplus Y$ and so computes $Y$, and $(f,Y)$ computes $g$.  Since $g$ computes $g(0^\infty)$, it follows that $Y \le_T g$.  All of this is uniform if the reduction from $f(X\oplus Y)$ to $f$ is.
\end{proof}

Note that the formula that says ``$f$ is a uniformly pointed perfect tree'' is arithmetic (the argument relies on the compactness of $2^\omega$).  Note also that if $f$ is a pointed perfect tree, $|f(\sigma)| \ge |\sigma|$ for all $\sigma$.

We are now ready to construct a high for isomorphism degree strictly below $\+O$. Our proof is inspired by Jockusch and Simpson's construction in \cite{JockuschSimpson1976} of a minimal upper bound for $\Delta^1_1$ with a triple jump computable from $\+O$, although we use strongly hyperlow trees (i.e., trees $f$ such that $\+O^f \le_T \+O$) rather than their $\Delta^1_1$ trees.

\begin{prop}
There is a degree $\* d$ which is high for isomorphism with $\* d^{(3)} = \+ O$.
\end{prop}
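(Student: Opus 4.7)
The plan is to build a strongly hyperlow $X$ via a nested sequence of uniformly pointed perfect trees, adapting Jockusch--Simpson's construction in \cite{JockuschSimpson1976}. Specifically, I would construct $\+O$-computably a sequence $f_0 \ge f_1 \ge f_2 \ge \cdots$ of uniformly pointed perfect trees with $\+O^{f_n} \le_T \+O$ uniformly in $n$, and take $X$ to be their common path. Two types of requirements are interleaved: \emph{path requirements} $P_e$, asking that $X$ compute a member of $\+P_e$ whenever $\+P_e$ is a nonempty $\Pi^0_1$ class of functions, and \emph{jump requirements} $J_e$, asking that the truth of the $e$-th $\Sigma^0_3$ formula $\varphi_e(X)$ be decided by the construction. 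Satisfying all $P_e$ makes $X$ high for paths, hence high for isomorphism by Proposition~\ref{prop:high_for_iso_vs_paths}, and satisfying all $J_e$ gives $X^{(3)} \le_T \+O$; the reverse inequality follows from Proposition~\ref{prop:jumps_of_high_are_powerful}(1), so $\*d^{(3)} = \+O$.

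To satisfy $P_e$ at stage $n$, I would apply the strengthened Gandy basis lemma relativized to $f_n$ to pick $Y \in \+P_e$ with $\+O^{f_n,Y} \le_T \+O^{f_n} \le_T \+O$ uniformly, and set $f_{n+1} = f_n \oplus Y$. By the lemma on $\oplus$ of pointed perfect trees, $f_{n+1}$ is again uniformly pointed perfect and $Y \le_T f_{n+1}(Z)$ for every path $Z$ through $f_{n+1}$; in particular the eventual $X$ computes this member of $\+P_e$. Since $f_{n+1}$ is computable from $f_n \oplus Y$, we get $\+O^{f_{n+1}} \le_T \+O^{f_n,Y} \le_T \+O$ uniformly, preserving strong hyperlowness.

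To satisfy $J_e$ at stage $n$, I would perform the standard three-level Jockusch--Simpson thinning appropriate to a $\Sigma^0_3$ formula, selecting at each level a uniformly pointed perfect subtree that forces the relevant $\Sigma^0_1$, $\Pi^0_2$, or $\Sigma^0_3$ fragment of $\varphi_e$. The required dichotomies (``is there a uniformly pointed perfect subtree of $f_n$ on which the given fragment is forced one way?'') are $\Sigma^1_1(f_n)$ and hence $\+O$-computable via $\+O^{f_n} \le_T \+O$; at each level the defining parameter can again be chosen strongly hyperlow via the Gandy basis lemma. Chasing the recorded decisions across the construction then yields $X^{(3)} \le_T \+O$.

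The main obstacle is preserving strong hyperlowness ($\+O^{f_n} \le_T \+O$) uniformly through every construction step, so that the entire sequence $(f_n)_{n \in \omega}$ remains $\+O$-computable. Every parameter chosen along the way is pulled from a $\Sigma^1_1(\+O)$ class, and the uniformity clause in the strengthened Gandy basis lemma is precisely what keeps this bound intact. This is the key improvement over the $\Delta^1_1$ pointed trees used by Jockusch and Simpson: those are not expressive enough to absorb arbitrary nonempty $\Pi^0_1$-class members, whereas strongly hyperlow uniformly pointed perfect trees are, which is exactly what the path requirements demand.
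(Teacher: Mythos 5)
Your proposal is correct and takes essentially the same approach as the paper: an $\+O$-computable nested sequence of strongly hyperlow uniformly pointed perfect trees, coding steps handled via the relativized strengthened Gandy basis lemma ($f_{n+1} = f_n \oplus Y$), triple-jump forcing decided through $\Sigma^1_1(f_n)$ dichotomies answered by $\+O^{f_n} \le_T \+O$, and the lower bound $\*d^{(3)} \ge_T \+O$ from Proposition~\ref{prop:jumps_of_high_are_powerful}. The only cosmetic differences are that the paper codes isomorphisms directly rather than members of $\Pi^0_1$ classes (equivalent by Proposition~\ref{prop:high_for_iso_vs_paths}) and writes out the Jockusch--Simpson-style $\Sigma^0_3$ forcing you cite: when no subtree forces totality of a section, divergence is forced one witness $m$ at a time at infinitely many later stages, with the growing stems supplying the common path.
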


\begin{proof}

For a set $X$, we take $X^{(3)} = \{ e : \exists m\, [\lambda z.\{e\}^X(m,z) \text{ is total}]\}$.

We build an $\+O$-computable sequence of uniformly pointed perfect trees $f_0 \ge f_1 \ge \dots$ where $\+ O^{f_i} \le_T \+ O$ for each $i$ (and the sequence of indices for the reductions is computable in $\+O$).  We alternate between working to force the triple jump and coding the next required isomorphism.

We begin by setting $f_0$ to be the identity: $f_0(\sigma) = \sigma$.

\smallskip

Suppose we have defined $f_n$ with $n = 2\seq{e,0}$.  Consider the class
\begin{align*}
U_e = \{ g : \ &\text{$g$ is a uniformly pointed perfect tree} \ \& \ g \le f_n\\ &\& \ \exists m\, [\lambda z.\{e\}^{g(X)}(m,z)\text{ is total for every $X \in 2^\omega$}]\}.
\end{align*}
This is arithmetic relative to $f_n$ (again, this relies on compactness) and thus is a $\Sigma^1_1(f_n)$ class.  Thus $\+O^{f_n}$ can decide whether $U_e$ is empty, and since $\+O^{f_n} \le_T \+O$, so can $\+O$.

If $U_e$ is nonempty, there is a $g \in U_e$ with $\+O^{g} \le_T \+O^{f_n} \le_T \+O$, and indices for the relevant reductions can be obtained effectively.  We thus let $f_{n+1} = g$, and we have now forced $e$ into the triple jump.  By replacing $g$ with $\lambda \sigma. g(0^n\cat\sigma)$, we may assume that $|f_{n+1}(\seq{})| > n$.

If $U_e$ is empty, we let $f_{n+1} = f_n$ (again, adjusting to ensure $|f_{n+1}(\seq{})| > n$).  By our actions at steps $2\seq{e,m+1}$ for $m \in \omega$, we will force $e$ out of the triple jump.

\smallskip

Now suppose instead that we have defined $f_n$ with $n = 2\seq{e, m+1}$.  If $U_e$ was nonempty at step $2\seq{e,0}$, then the triple jump is already forced at $e$, so we let $f_{n+1} = f_n$.

If $U_e$ was empty at step $2\seq{e,0}$, we know there are $\tau$ and $z$ such that $\{e\}^{f_n(\sigma)}(m,z)\diverge$ for every $\sigma \supseteq \tau$: if this were not the case, we could define an $h$ as follows for a contradiction.
\begin{itemize}
\item Let $h(\seq{}) = \sigma$ for the first string $\sigma$ discovered with $\{e\}^{f_n(\sigma)}(m,0)\converge$.
\item For $\rho \in 2^{<\omega}$ and $i < 2$, let $h(\rho\cat i) = \sigma$ for the first string $\sigma \supseteq h(\rho)\cat i$ discovered with $\{e\}^{f_n(\sigma)}(m,|\rho|+1)\converge$.
\end{itemize}
By assumption, $h$ is total.  Now we let $g = f_n\circ h$.  Note that $h \le_T f_n$, so $g \le_T f_n$, and for every $X$, $g(X) = f(h(X))$.  Thus $g$ is a uniformly pointed perfect tree, and $g \le f_n \le f_{2\seq{e,0}}$.  Also, $\lambda z.\{e\}^{g(X)}(m,z)$ is total for all $X$, and so $g \in U_e$ contrary to assumption.  Therefore, there must be such a $\tau$ and $z$.  The set of such pairs $(\tau, z)$ is arithmetical in $f_n$, and so $\+O^{f_n} \le_T \+O$ can uniformly find such a pair.

We define $f_{n+1}(\sigma) = f_n(\tau\cat \sigma)$.  So $\{e\}^{f_{n+1}(X)}(m,z)\diverge$ for every $X \in 2^\omega$, and we have taken another step towards forcing the triple-jump at $e$.

\smallskip

Suppose instead we have defined $f_n$ with $n = 2\seq{k,m}+1$.  We will choose $f_{n+1}$ able to compute an isomorphism between $\+ M_k$ and $\+ M_m$ if such an isomorphism exists.  Consider $V_n = \{ r\in \omega^\omega \ | \ r : \+ M_k \cong \+ M_m\}$.  This is a $\Sigma^1_1$ class, and $\+O$ can therefore determine whether it is empty.

If $V_n$ is empty, we define $f_{n+1} = f_n$. If $V_n$ is nonempty, then we note that $V_n$ is also a $\Sigma^1_1(f_n)$ class.  This means that there is an $r \in V_n$ with $\+ O^{r,f_n} \le_T \+ O^{f_n} \le_T \+ O$, and relevant indices can be obtained effectively.  Let $R$ be a set coding $r$ in some effective fashion, and let $f_{n+1} = f_n\oplus R$.  Then $f_{n+1} \le_T (f_n, R)$, and so $\+ O^{f_{n+1}} \le_T \+O$.

\smallskip

By construction, $|f_{n+1}(\seq{})| > n$ for every $n = 2\seq{e,0}$.  We let
\[
D = \bigcup_{n = 2\seq{e,0}} f_{n+1}(\seq{}).
\]
Then for every $n$, $D = f_n(X)$ for some $X$, so if $\+M_k \cong \+M_m$, we can let $n = 2\seq{k,m}$.  Furthermore, for the $r$ chosen at step $n$, we have \[r \equiv_T R \le_T f_n\oplus R = f_{n+1} \le_T D,\] and thus $D$ computes an isomorphism between $\+M_k$ and $\+M_m$.

Furthermore, $e \in D^{(3)}$ precisely when $U_e$ is nonempty by construction.  Since the construction is $\+O$-computable, $D^{(3)} \le_T \+O$.  By Proposition~\ref{prop:jumps_of_high_are_powerful}, $\*d = \text{\bf deg}(D)$ is the desired degree.
\end{proof}

From this result, combined with Proposition \ref{prop:double_high_is_uniform}, we immediately obtain the following.

\begin{cor}
There is a degree $\* {d}$ which is uniformly high for isomorphism with $\* {d}' = \sO$.
\end{cor}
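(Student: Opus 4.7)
The plan is to apply the preceding proposition and then iterate the jump via Proposition~\ref{prop:double_high_is_uniform}. More precisely, let $\mathbf{d}_0$ be the degree produced by the previous proposition, so that $\mathbf{d}_0$ is high for isomorphism and $\mathbf{d}_0^{(3)} = \mathcal{O}$. Define $\mathbf{d} = \mathbf{d}_0''$.

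The first step is to verify uniform highness for isomorphism. Since $\mathbf{d}_0$ is high for isomorphism, Proposition~\ref{prop:double_high_is_uniform} applied to $\mathbf{d}_0$ shows that $\mathbf{d}_0'' = \mathbf{d}$ is uniformly high for isomorphism. (The content of that proposition is precisely that the $\Pi^0_2(D)$ matrix ``$\{e\}^D$ is an isomorphism between $\mathcal{M}_i$ and $\mathcal{M}_j$'' can be searched uniformly using $D''$.)

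The second step is the jump calculation, which is immediate: $\mathbf{d}' = (\mathbf{d}_0'')' = \mathbf{d}_0^{(3)} = \mathcal{O}$. There is no real obstacle here, since both ingredients are already in hand; the only thing to observe is that this is consistent with Proposition~\ref{prop:jumps_of_high_are_powerful}(2), which forces $\mathbf{d}^{(2)} \ge \mathcal{O}$ for any uniformly high for isomorphism $\mathbf{d}$, and indeed $\mathbf{d}^{(2)} = \mathcal{O}'$. Thus the corollary follows with no further work.
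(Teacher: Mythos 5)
Your proof is correct and follows exactly the route the paper intends: take the degree $\mathbf{d}_0$ high for isomorphism with $\mathbf{d}_0^{(3)} = \+O$ from the preceding proposition, apply Proposition~\ref{prop:double_high_is_uniform} to get that $\mathbf{d}_0''$ is uniformly high for isomorphism, and note $(\mathbf{d}_0'')' = \mathbf{d}_0^{(3)} = \+O$. Nothing further is needed.
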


It is worthwhile to contrast the state of our knowledge for the jumps of degrees high for isomorphism with those uniformly high for isomorphism.  Certainly both classes are closed upwards in the Turing degrees. Proposition~\ref{prop:jumps_of_high_are_powerful} tells us that a degree high for isomorphism is at most three jumps below $\+O$, and we have an example showing that this is tight.  On the other hand, the same proposition tells us that a degree uniformly high for isomorphism is at most two jumps below $\+O$, but the best example we have is only one jump below $\+O$.  We can, at least, rule out one nonexample.

\begin{prop}
If $\*d$ is high for isomorphism with $\*d^{(3)} = \+O$, then $\*d'$ is not uniformly high for isomorphism.
\end{prop}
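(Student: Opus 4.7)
The plan is to derive a contradiction by combining the two bounds in Proposition~\ref{prop:jumps_of_high_are_powerful} with the identity $\*d^{(3)} = \+O$. The key insight is that the \emph{non-uniform} highness for isomorphism of $\*d$ places $\+O$ on one side of the arithmetical hierarchy relative to $\*d'$, while \emph{uniform} highness for isomorphism of $\*d'$ places $\+O$ on the other side, squeezing $\+O$ strictly below $\*d'''$.

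Concretely, I would suppose toward contradiction that $\*d'$ is uniformly high for isomorphism. Applying Proposition~\ref{prop:jumps_of_high_are_powerful}(2) to the degree $\*d'$ gives $\+O \in \Sigma^0_2(\*d')$. Meanwhile, applying Proposition~\ref{prop:jumps_of_high_are_powerful}(1) to $\*d$ itself gives $\+O \in \Pi^0_3(\*d)$. The standard shift $\Pi^0_{n+1}(X) = \Pi^0_n(X')$ in the arithmetical hierarchy lets me rewrite $\Pi^0_3(\*d) = \Pi^0_2(\*d')$, so $\+O \in \Pi^0_2(\*d')$ as well. Putting these together, $\+O \in \Sigma^0_2(\*d') \cap \Pi^0_2(\*d') = \Delta^0_2(\*d')$, which is just to say $\+O \le_T \*d''$. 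But by hypothesis $\+O \equiv_T \*d^{(3)} = \*d'''$, so the inequality would force $\*d''' \le_T \*d''$, contradicting the strictness of the Turing jump.

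There is no substantive obstacle in carrying this out; the proof is really a matter of bookkeeping complexity relative to $\*d'$ rather than $\*d$, and then noting that the hypothesis $\*d^{(3)} = \+O$ is precisely what is needed to rule out the $\Delta^0_2(\*d')$ placement that the two parts of Proposition~\ref{prop:jumps_of_high_are_powerful} would otherwise permit.
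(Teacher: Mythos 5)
Your proposal is correct and follows essentially the same argument as the paper: both apply Proposition~\ref{prop:jumps_of_high_are_powerful}(2) to $\*d'$ and (1) to $\*d$, and derive $\+O \le_T \*d''$, contradicting $\*d^{(3)} = \+O$. The only difference is cosmetic bookkeeping---you shift $\Pi^0_3(\*d)$ down to $\Pi^0_2(\*d')$ while the paper lifts $\Sigma^0_2(\*d')$ up to $\Sigma^0_3(\*d)$.
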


\begin{proof}
If $\*d'$ were uniformly high for isomorphism, then by Proposition~\ref{prop:jumps_of_high_are_powerful}, $\+O$ would be $\Sigma^0_2(\*d')$ and thus $\Sigma^0_3(\*d)$.  As $\*d$ is high for isomorphism, $\+O$ is $\Pi^0_3(\*d)$, and so $\+O$ would be $\Delta^0_3(\*d)$, contradicting $\*d^{(3)} = \+O$.
\end{proof}

It is perhaps natural to wonder whether $\sO$ is the only bound of all degrees below $\sO$ which are high for isomorphism.  We might also ask whether the degrees high for isomorphism are linearly ordered, or whether there is a least degree which is high for isomorphism.  The next result resolves all of these questions.

\begin{prop} There exist degrees $\mathbf{d}_1$ and $\mathbf{d}_2$, each uniformly high for isomorphism, with $\mathbf{d}_i \lneq_T \sO$ such that $\mathbf{d}_1 \oplus \mathbf{d}_2 \equiv_T \sO$, and any $\mathbf{d} \le_T \mathbf{d}_1, \mathbf{d}_2$ is hyperarithmetic.\end{prop}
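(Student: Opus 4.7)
The plan is to parallelize and interleave the construction of the previous proposition, building two uniformly pointed perfect trees simultaneously. Using $\+O$ as oracle, I build sequences $(f^1_n)_n$ and $(f^2_n)_n$ of uniformly pointed perfect trees with $\+O^{f^i_n}\le_T\+O$ uniformly, taking $D_i = \bigcup_n f^i_n(\seq{})$ as the target reals. \emph{Iso-coding} stages apply the strengthened Gandy basis lemma separately to each tree, ensuring that each $\*d_i$ is uniformly high for isomorphism exactly as in the preceding proof. \emph{$\+O$-coding} stages use a splitting $\+O\equiv_T\+O^{(1)}\oplus\+O^{(2)}$ with each $\+O^{(i)}\lneq_T\+O$, obtainable via standard splitting in the hyperdegrees, coding bits of $\+O^{(i)}$ into $D_i$ by bit-directed subtree restriction so that $D_1\oplus D_2\ge_T\+O$. \emph{Triple-jump forcing}, inherited from the previous proof and applied to each tree, gives $D_i^{(3)}\le_T\+O$, so $D_i\lneq_T\+O$.

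At a \emph{minimal-pair} stage for a functional index pair $(e_1,e_2)$, I use $\+O$ to determine which of four $\Sigma^1_1$ cases holds: (A) some uniformly pointed perfect $g_1\le f^1_n$ and some $z$ satisfy $\Phi_{e_1}^{g_1(X)}(z)\diverge$ for all $X$; (B) the symmetric condition on the other side; (C) some $g_1,g_2$ and $z$ force $\Phi_{e_1}^{g_1(X)}(z)\ne\Phi_{e_2}^{g_2(Y)}(z)$ for all $X,Y$ (both converging); or (D) none of these. In cases (A)--(C), shrink to the witnessing subtrees, thereby diagonalizing the pair. In case (D), the common function $F=\Phi_{e_1}^{D_1}=\Phi_{e_2}^{D_2}$, if total, must equal $\Phi_{e_1}^{f^1_n(\sigma)}(z)$ for any $\sigma$ on which this converges: such $\sigma$ exists because (A) fails, and the value is unique because (C) fails (otherwise two subtrees of $f^1_n$ above disagreeing $\sigma$'s together with any convergent $\tau$ on $f^2_n$ would witness (C)).

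The main obstacle is establishing hyperarithmeticity of $F$ in case (D). The immediate bound $F\le_T f^1_n$ yields only $F\le_T\+O$, because $f^1_n$ is strongly hyperlow but need not be hyperarithmetic. I plan to strengthen this by characterizing $F$ as a $\Sigma^1_1$-singleton: the unique common agreement function arising on any pair of uniformly pointed perfect trees satisfying the cumulative $\Sigma^1_1$ conditions of the construction up to this stage with case (D) holding at $(e_1,e_2)$. Since $\Sigma^1_1$-singletons are $\Delta^1_1$, $F$ would then be hyperarithmetic. Making the uniqueness precise --- so that the $\Sigma^1_1$ class defining $F$ truly has a single element despite the freedom in choosing the auxiliary trees --- is the delicate heart of the proof, analogous to the tagged-tree machinery used in classical minimal-pair-over-hyperarithmetic constructions.
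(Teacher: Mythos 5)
There is a genuine gap, and it sits exactly where you flag it: the hyperarithmeticity of the common function $F$ in case (D). Your fallback --- realize $F$ as a $\Sigma^1_1$ singleton --- rests on describing $F$ as ``the unique agreement function arising on any pair of trees satisfying the cumulative $\Sigma^1_1$ conditions of the construction,'' but the cumulative conditions on your trees are not $\Sigma^1_1$: strong hyperlowness ($\+O^{f}\le_T\+O$), the $\+O$-adjudicated outcomes of the triple-jump cases, and the Gandy-selected isomorphisms coded at iso-stages are all conditions defined relative to $\+O$, not $\Sigma^1_1$ predicates of the tree. If you weaken the description to the conditions that \emph{are} $\Sigma^1_1$, uniqueness fails (different admissible pairs of trees need not be in case (D), and when they are, nothing forces them to induce the same agreement function), so the defining class is not a singleton; if you keep the full conditions, the class is not $\Sigma^1_1$. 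The root problem is that $F$ is only computable from $f^1_n$, which is deliberately non-hyperarithmetic because it carries coded isomorphisms, so no bound of the form ``$F$ is recursive in the condition'' can help. The paper escapes this by changing the forcing entirely: conditions are \emph{finite} binary strings $\delta$ together with the $\Sigma^1_1$ side condition $R(\delta)\cap\+O=\emptyset$, so when splitting fails the verification yields a direct $\Sigma^1_1$ definition of $X(m)=b$ (``there exists a finite $\tau\supseteq\delta$ with $R(\tau)\cap\+O=\emptyset$ and $\Phi_{e_1}^{\tau}(m)=b$'') and likewise for $X(m)\neq b$, whence $X\in\Delta^1_1$. That move is not a technical refinement of your plan; it is a different construction.

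Two further points would also need repair even if the minimal-pair step were fixed. First, the statement asks for \emph{uniform} highness for isomorphism, but the iso-coding of the preceding tree construction only yields non-uniform highness (the reduction recovering the coded isomorphism depends on the stage-by-stage trees, whose indices are $\+O$-computable, not computable; that is why the paper needed a double jump in the corollary to upgrade to uniformity). The paper instead makes each $D_i$ \emph{enumerate} $\omega\setminus\+O$ and invokes Theorem~\ref{thm:reticence_all_the_same}, which gives uniform highness for isomorphism outright. Second, your $\+O$-coding step has a decoding circularity: to get $D_1\oplus D_2\ge_T\+O$ the join must locate the bit-coding positions, i.e.\ effectively reconstruct the sequence of trees, which were produced by $\+O$-searches and are only recoverable from $D_i$ non-uniformly; in the paper the coding locations are the fixed positions $2^k$ and the stage outcomes are recoverable from the strings themselves, so the join computes the construction and hence $\+O$ (and, incidentally, no prior splitting of $\+O$ into two strictly smaller halves is needed). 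As written, your proposal does not yet prove the proposition, and the acknowledged ``delicate heart'' is precisely the point at which the approach, in its current form, breaks.
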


\begin{proof}  

We will construct, effectively in $\sO$, two degrees $\mathbf{d}_1$ and $\mathbf{d}_2$, each of which enumerates $\omega - \sO$, and such that $\mathbf{d}_1 \oplus \mathbf{d}_2 \equiv_T \sO$.  By Theorem \ref{thm:reticence_all_the_same}, these degrees will be uniformly high for isomorphism.  

Let $\langle \cdot, \cdot \rangle$ be a bijection of $\omega^2$ with $\omega - \{2^k : k \in \omega\}$.  We first define an operator $R$ on $2^{<\omega}$ by setting $R(\sigma)$ to be the set of $n$ such that there is $k$ with $\sigma(\langle k,n \rangle) = 1$, and we set $\delta_{1,0}=\delta_{2,0} = \emptyset$.  We will maintain that $R(\delta_{1,s}), R(\delta_{2,s}) \subset \omega - \+O$, and we will arrange that $\bigcup_s R(\delta_{1,s}) = \bigcup_s R(\delta_{2,s}) = \omega - \+O$.

At stage $5s$, we search, using oracle $\sO$, for some $\sigma \in 2^{<\omega}$ extending $\delta_{1,5s}$ such that $\varphi_s^\sigma(s)\converge$ and $R(\sigma) \cap \sO = \emptyset$.  If such a $\sigma$ exists (a $\Sigma_1^1$ condition), we set $\delta_{1,5s+1} = \sigma$ and $\delta_{2,5s+1} = \delta_{2,5s} \cup \{(2^{k_{5s}},1)\}$ for the least $k_{5s} >\left|\delta_{2,5s}\right|$.  Otherwise, we set $\delta_{1,5s+1} = \delta_{1,5s}$ and $\delta_{2,5s+1} = \delta_{2,5s} \cup \{(2^{k_{5s}},0)\}$.  At stage $5s+1$, we act similarly, exchanging the roles of $\delta_1$ and $\delta_2$.

At stage $5s+2$, we let $n_{5s+1}$ be the least such that \[n_{5s+1} \in (\omega - \sO) \cap \left(\omega - R\left( \delta_{1,5s+2} \right)\right)\] and find the least $k_{5s+2}$ such that $\langle k_{5s+2},n_{5s+1} \rangle > \left|\delta_{1,5s+2}\right|$.  We set \[\delta_{1,5s+3} = \delta_{1,5s+2} \cup \left\{(\langle k_{5s+1},n_{5s+1} \rangle, 1)\right\}\] and $\delta_{2,5s+3} = \delta_{2,5s+2}$.  At stage $5s+3$, we act similarly, again exchanging the roles of $\delta_1$ and $\delta_2$.

At stage $5s+4$, we let $s = \seq{e_1, e_2}$.  Fix the least $k_1 > |\delta_{1, 5s+4}|$ and $k_2 > |\delta_{2, 5s+4}|$.  We search, using oracle $\sO$, for some $\sigma_1$ extending $\delta_{1, 5s+4}$ and $\sigma_2$ extending $\delta_{2, 5s+4}$ with $\sigma_1(2^{k_1}) = \sigma_2(2^{k_2}) = 1$ and $R(\sigma_1) \cap \sO = R(\sigma_2)\cap \sO = \emptyset$ and some $m$ with $\Phi_{e_1}^{\sigma_1}(m)\converge \neq \Phi_{e_2}^{\sigma_2}(m)\converge$.  If such $\sigma_1$ and $\sigma_2$ exist (a $\Sigma^1_1$ condition), we set $\delta_{1,5s+5} = \sigma_1$ and $\delta_{2, 5s+5} = \sigma_2$.

If no such pair exists, we instead search for a split above just $\delta_{1,5s+4}$.  If there are strings $\tau_1, \tau_2$ both extending $\delta_{1, 5s+4}$ with $\tau_1(2^{k_1}) = \tau_2(2^{k_1}) = 1$ and $R(\tau_1)\cap \sO = R(\tau_2)\cap \sO = \emptyset$ and some $m$ with $\Phi_{e_1}^{\tau_1}(m)\converge \neq \Phi_{e_1}^{\tau_2}(m)\converge$, then we let \[\delta_{1,5s+5} = \delta_{1, 5s+4} \cup \{(2^{k_1}, 0)\}\] and \[\delta_{2, 5s+5} = \delta_{2, 5s+4} \cup \{2^{k_2}, 1)\}.\]  Otherwise, we let $\delta_{1,5s+5} = \delta_{1, 5s+4} \cup \{(2^{k_1}, 1)\}$ and $\delta_{2, 5s+5} = \delta_{2, 5s+4} \cup \{(2^{k_2}, 0)\}$.

We now let $D_i = \bigcup\limits_{s \in \omega} \delta_{i,s}$ for each $i$.  We note that, by construction, each $D_i$ enumerates $(\omega - \sO)$ and is computable from $\sO$; indeed, $\sO$ can compute $D_i'$, since when we find at stage $5s$ that there is no $\sigma$ which both forces convergence and maintains disjointness from $\sO$, we commit to never allowing $\varphi_s^{D_i}(s)$ to converge.  By a standard argument, $D_1\oplus D_2$ can compute the sequence $(\delta_{1, s}, \delta_{2, s})_{s \in \omega}$.  Thus the join can compute $\+O$, as $n \in \+O$ if and only if $n \not \in R(\delta_{1, 5(n+1)})$.

Finally, we argue that if $X \le_T D_1, D_2$, then $X$ is hyperarithmetic.  Fix $s = \seq{e_1, e_2}$ with $X = \Phi_{e_1}^{D_1} = \Phi_{e_2}^{D_2}$.  Then we must not have found the desired strings $\sigma_1, \sigma_2$ at stage $5s+4$.  Suppose we found the subsequent strings $\tau_1, \tau_2$ and the corresponding $m$ instead.  Then $D_2(2^{k_2}) = 1$ and $\Phi_{e_2}^{D_2}(m)\converge$, so there is some initial segment of $D_2$ witnessing this convergence and extending $\delta_{2, 5s+4}$.  There is some $j$ such that $\Phi_{e_1}^{\tau_j}(m) \neq \Phi_{e_2}^{D_2}(m)$, which gives us a pair $\sigma_1, \sigma_2$, contrary to the above.

So we did not find $\tau_1$ and $\tau_2$, meaning that $D_1(2^{k_1}) = 1$.  Thus, for any $m$ and any $\tau$ extending $\delta_{1, 5s+4}$ with $\tau(2^{k_1}) = 1$ and $R(\tau)\cap \sO = \emptyset$, we have \[\Phi_{e_1}^{\tau}(m)\converge \Rightarrow \Phi_{e_1}^{\tau}(m) = \Phi_{e_1}^{D_1}(m) = X(m).\]  Thus $X(m) = 1$ if and only if there is such a $\tau$ with $\Phi_{e_1}^{\tau}(m) = 1$, and $X(m) = 0$ if and only if there is such a $\tau$ with $\Phi_{e_1}{\tau}(m)$. These are both $\Sigma^1_1$ conditions, so $X$ is $\Delta^1_1$.
\end{proof}

\subsection{High for a restricted class of isomorphisms}

In the definition of highness for isomorphism, we considered isomorphisms between any two computable structures.  We will now restrict our attention to isomorphisms of a particular class of computable structures, namely presentations of the Harrison order $\omega_1^{ck}\cdot (1+\bQ)$; a similar analysis was done in the context of lowness for isomorphism by Suggs \cite{suggs}.

\begin{dfn}
A degree $\*{d}$ is {\em high for isomorphism for Harrison orders} if $\*{d}$ computes an isomorphism between any two computable linear orders of order type $\omega_1^{ck}(1+\bQ)$.  We define \emph{uniformly high for isomorphism for Harrison orders} in the natural way.
\end{dfn}

Our goal is to show that this is the same class of degrees previously considered and thus that Harrison orders are universal in the context of highness for isomorphism (i.e.\ restricting attention from all structures to Harrison orders gives no loss of generality), much as, e.g., computable Polish spaces are in the context of lowness for isomorphism \cite{fm-lowisom}. 

\begin{thm}\label{thm:highforisoandharrison}
The degrees which are (uniformly) high for isomorphism for Harrison orders are precisely the degrees which are (uniformly) high for isomorphism.
\end{thm}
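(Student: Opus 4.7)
The plan is to handle the two directions separately. The forward direction is immediate: if $\* d$ is (uniformly) high for isomorphism, it is, in particular, (uniformly) high for isomorphism on the subclass of Harrison orders, since these are computable structures. For the converse---that (uniform) highness for isomorphism for Harrison orders implies (uniform) highness for isomorphism in general---I would invoke Proposition~\ref{prop:high_for_iso_vs_paths} to reduce to showing that $\* d$ is (uniformly) high for paths. Given a computable tree $T \subseteq \omega^{<\omega}$ with $[T] \neq \emptyset$, the goal is then to construct, uniformly from an index for $T$, a pair of computable Harrison orders whose isomorphisms compute paths through $T$.

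The construction is designed to parallel the proof of Proposition~\ref{prop:high_for_iso_vs_paths}, with Harrison orders in place of rank-saturated trees of infinite rank. I need three analogous ingredients: (i) a computable Harrison order $H_0$ equipped with a computable infinite descending sequence; (ii) the fact that any two computable Harrison orders are isomorphic, which is immediate since they realize the same order type $\omega_1^{ck}(1+\bQ)$; and (iii) a uniformly computable construction $T \mapsto L_T$ taking a computable tree with an infinite path to a computable Harrison order whose ill-founded structure encodes the path-structure of $T$. For (i), I would prepend a computable $\omega$-descending chain to any fixed Harrison order; since $\alpha + \omega_1^{ck}(1+\bQ) \cong \omega_1^{ck}(1+\bQ)$ for $\alpha < \omega_1^{ck}$, the result is still Harrison and its bottom is a computable descending sequence.

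For (iii), a natural candidate is $L_T = H_0 + \mathrm{KB}(T) + H_0$: by Harrison's theorem $\mathrm{KB}(T) \cong \alpha + \omega_1^{ck}(1+\bQ) + \beta$ for some computable $\alpha, \beta < \omega_1^{ck}$ when $T$ has a path, and the absorption identities $\alpha + H_0 \cong H_0$ and $H_0 + H_0 \cong H_0$ collapse the sum to $H_0$, so that $L_T$ is always a Harrison order. An isomorphism $f : H_0 \to L_T$ computed from $\* d$ pulls back the convex decomposition of $L_T$, identifying the convex subset of $H_0$ mapping onto the middle $\mathrm{KB}(T)$-block; a descending sequence through this subset decodes via $f$ to a descending sequence in $\mathrm{KB}(T)$, from which a path through $T$ is extracted by standard Kleene--Brouwer manipulations.

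The main obstacle will be ensuring that the computable descending sequence in $H_0$ is carried by $f$ into (or can be steered into) the middle $\mathrm{KB}(T)$-block, rather than being absorbed into one of the flanking copies of $H_0$. To address this I expect to refine the construction so that the $\mathrm{KB}(T)$-block is densely interleaved within the $\omega_1^{ck}\cdot\bQ$-portion of $L_T$, forcing every cofinally descending sequence past the initial $\omega_1^{ck}$ to thread through $\mathrm{KB}(T)$-coded elements infinitely often. The uniform version then comes for free: both $T \mapsto L_T$ and the decoding procedure from $f$ to a path are uniform in indices, so a uniform witness for the Harrison-order hypothesis transfers directly to a uniform witness for highness for paths, and hence, by Proposition~\ref{prop:high_for_iso_vs_paths}, for highness for isomorphism.
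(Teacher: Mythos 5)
Your overall strategy (reduce to highness for paths via Proposition~\ref{prop:high_for_iso_vs_paths}, then code a computable tree into a pair of computable copies of the Harrison order) is the right one, and the forward direction is indeed immediate; but two of your three ingredients fail as stated. First, prepending a computable descending chain to a Harrison order does not produce a Harrison order: $\omega^* + \omega_1^{ck}(1+\bQ)$ has empty well-founded initial segment, while $\omega_1^{ck}(1+\bQ)$ begins with a copy of $\omega_1^{ck}$; the absorption identity $\alpha + \omega_1^{ck}(1+\bQ) \cong \omega_1^{ck}(1+\bQ)$ you cite applies only to well-ordered $\alpha$. (What does work, and what the paper uses, is $\+L = \+H\cdot(1+\bQ)$, which carries a computable copy of $\bQ$ and hence a computable descending sequence.) Second, and more seriously, the structure theorem you invoke for $\mathrm{KB}(T)$ is false for arbitrary computable ill-founded $T$: it requires that $T$ have no hyperarithmetic path. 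For $T = \{0^n : n \in \omega\}$ one has $\mathrm{KB}(T) \cong \omega^*$, so $H_0 + \mathrm{KB}(T) + H_0$ acquires a condensation class of type $\omega^* + \omega_1^{ck}$ and is not of type $\omega_1^{ck}(1+\bQ)$; thus your $L_T$ need not be a Harrison order at all, and the hypothesis on $\*d$ gives you nothing. This is exactly what the paper's construction is built to handle: one first replaces $T$ by $S\star T$, where $S$ is the tree of finite descending sequences of a fixed Harrison order, so that $S\star T$ has a path if and only if $T$ does but has no hyperarithmetic path, and only then applies $\mathrm{KB}$; the claim about the order type $\omega_1^{ck}(1+\bQ)+1$ is proved for $\mathrm{KB}(S\star T)$, and paths through $S\star T$ project uniformly to paths through $T$.

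The decoding step is a further genuine gap. Even if an isomorphism handed you a descending sequence through the $\mathrm{KB}(T)$-portion, there are no ``standard Kleene--Brouwer manipulations'' that compute a path through $T$ from a descending sequence in $\mathrm{KB}(T)$: the classical extraction of a path is an iterated limit argument and is not effective, and in fact the paper's own separations (e.g.\ Corollary~\ref{cor:separating_uniformly_high_desc_from_high_iso}) show that computing descending sequences in ill-founded orders is strictly weaker than computing paths through ill-founded trees, so no uniform such conversion can exist. The paper's proof never converts a descending sequence into a path; instead it uses the isomorphism $f:\+L \to \+K$ directly, together with the computably embedded dense copy of $\bQ$ in $\+L$, to build a path through $S\star T$ node by node: at each step the cones above the children of the current node partition the corresponding convex set of $\+K$ into convex pieces of order type $\omega$, so some nontrivial rational interval must be mapped by $f\circ\iota$ into a single child's cone, which both selects the next node and certifies that its cone remains ill-founded. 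Your closing suggestion to ``densely interleave'' the $\mathrm{KB}(T)$-block gestures toward this, but as written it is not a construction, and the steering problem plus the descending-sequence-to-path problem are precisely where the substance of the proof lies.
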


We begin with the following lemma, recalling Notation \ref{TreeStar}.

\begin{lem}
$S\star T$ has an infinite path if and only if both $S$ and $T$ do.  Moreover, if $S$ and $T$ are computable, then a path through $S\star T$ uniformly computes paths through $S$ and $T$, and the join of a path through $S$ with a path through $T$ uniformly computes a path through $S\star T$.
\end{lem}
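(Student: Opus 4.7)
The plan is to verify both the equivalence and the uniform computability claims directly from the definition of $S\star T$, since essentially all the content is bookkeeping with the pairing structure.

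First, for the forward implication: suppose $P$ is an infinite path through $S\star T$. Under the identification of $S\star T$ with a subtree of $\omega^{<\omega}$ via an effective pairing, $P$ corresponds to a pair $(X_0, X_1) \in (\omega^\omega)^2$ with the property that, for every $n$, $(X_0\uhr n, X_1\uhr n) \in S\star T$. By the definition of $\star$, this means $X_0\uhr n \in S$ and $X_1\uhr n \in T$ for every $n$, so $X_0$ is a path through $S$ and $X_1$ is a path through $T$. The maps $P \mapsto X_0$ and $P \mapsto X_1$ are uniformly computable inverses of the effective pairing, so a path through $S\star T$ uniformly computes paths through $S$ and $T$.

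For the reverse implication, suppose $X$ is an infinite path through $S$ and $Y$ is an infinite path through $T$. For each $n$ we have $X\uhr n \in S$, $Y\uhr n \in T$, and trivially $|X\uhr n| = |Y\uhr n| = n$, so $(X\uhr n, Y\uhr n) \in S\star T$; thus $(X,Y)$ encodes an infinite path through $S\star T$. Building this encoded path from $X \oplus Y$ is again just applying the effective pairing stage by stage, so the join of a path through $S$ with one through $T$ uniformly computes a path through $S\star T$.

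I do not expect any real obstacle here: the argument is purely definitional, and the only thing one needs to be slightly careful about is noting that the identification of $S\star T$ with a subtree of $\omega^{<\omega}$ is effective so that all the projections and pairings referenced above are uniformly computable. The computability of $S$ and $T$ is used only to the extent that one needs $S\star T$ to make sense as a computable object in the analogous trees-in-$\omega^{<\omega}$ formalism, which is immediate from the remark following Notation~\ref{TreeStar}.
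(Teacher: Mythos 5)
Your proposal is correct and takes essentially the same approach as the paper: both simply observe that, by the definition of $\star$, a path through $S\star T$ is exactly a pair consisting of a path through $S$ and a path through $T$, with the uniform computability claims following from the effectiveness of the pairing. The paper's proof is a one-line version of the same definitional unwinding you carry out.
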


\begin{proof}
A sequence $((\sigma_0^i, \sigma_1^i))_{i \in \omega}$ is a path through $S\star T$ if and only if $(\sigma_0^i)_{i \in \omega}$ is a path through $S$ and $(\sigma_1^i)_{i \in \omega}$ is a path through $T$.
\end{proof}

\begin{proof}[Proof of Theorem~\ref{thm:highforisoandharrison}]
The backwards direction is immediate.  For the other direction, we will show that (uniform) highness for isomorphism for Harrison orders implies (uniform) highness for paths.  Specifically, given a computable tree $T$ generating a $\Pi^0_1$ class, we will uniformly exhibit a pair of computable linear orders such that if $[T]$ is nonempty, then the linear orders are both of order type $\omega_1^{ck}(1+\bQ)$, and we will give a uniform process to compute an element of $[T]$ from an isomorphism between the orders.  This will suffice to establish the result.

First, we fix a Harrison order $\+H$, i.e., a computable linear order of order type $\omega_1^{ck}(1+\bQ)$ with no descending hyperarithmetic sequence.  Let $\+L = \+H(1+\bQ)$.  Note that $\+L \cong \+H$: it is easy to see that $(1+\bQ)(1+\bQ)$ has a least element and is otherwise a countable dense linear order without endpoints, whence $(1+\bQ)(1+\bQ) \cong 1+\bQ$; and multiplication of linear orders is associative.  Furthermore, there is a computable embedding $\iota:\bQ \hookrightarrow \+L$: fix some $h \in \+H$, and define $\iota(q) = (h, q)$.  Let $S$ be the tree of finite descending sequences in $\+H$.

Now fix a computable tree $T$.
\begin{claim}
If $T$ has an infinite path, then $KB(S\star T)$, the Kleene-Brouwer ordering on $S\star T$, has order type $\omega_1^{ck}(1+\bQ) + 1$, where the final element is $\seq{}$, the root of $S\star T$.
\end{claim}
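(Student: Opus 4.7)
The approach is to compute the Kleene--Brouwer order type of $S\star T$ directly using the recursive decomposition of $KB$ over tree children. First observe that $S$ has infinite paths because $\mathcal{H}$ is ill-founded, and $T$ has infinite paths by hypothesis, so by the preceding lemma $S\star T$ has an infinite path and $KB(S\star T)$ is ill-founded. The root $\langle\rangle$ is the $KB$-maximum of $S\star T$ (every other node is a proper extension, hence smaller in $KB$), so it suffices to identify $KB(S\star T)\setminus\{\langle\rangle\}$ with a Harrison order $\omega_1^{ck}(1+\bQ)$.

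Using the standard decomposition of $KB$ over the root's children in increasing order of their natural-number encoding, we have $KB(S\star T)\setminus\{\langle\rangle\}\cong\sum_{(h,k)} KB(S^{\langle h\rangle}\star T^{\langle k\rangle})$, where $(h,k)$ ranges over immediate children of the root (so $h\in\mathcal{H}$ and $\langle k\rangle\in T$), and $S^{\langle h\rangle}$ is the tree of finite descending sequences in $\mathcal{H}_{<h}$. A summand is well-founded exactly when either $h\in\mathcal{H}_{\mathrm{WF}}$ or $\langle k\rangle$ does not extend to a path in $T$; the remaining summands (where $h$ is in the ill-founded part of $\mathcal{H}$ and $\langle k\rangle$ extends to a $T$-path) are ill-founded.

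Next I would show the well-founded part of $KB(S\star T)$ has rank exactly $\omega_1^{ck}$: the upper bound is automatic for any computable linear order, while the lower bound follows from the fact that $\mathcal{H}_{\mathrm{WF}}$ has order type $\omega_1^{ck}$, so for each $\alpha<\omega_1^{ck}$ there is $h\in\mathcal{H}_{\mathrm{WF}}$ of rank at least $\alpha$, and the corresponding well-founded summand $KB(S^{\langle h\rangle}\star T^{\langle k\rangle})$ has rank at least $\alpha$. Aggregating these contributions gives the full $\omega_1^{ck}$. The ill-founded part above should be identified with $\omega_1^{ck}\cdot\bQ$ (the order type of $\mathcal{H}_{\mathrm{IF}}$), using that the ill-founded summands are indexed by pairs $(h,k)$ with $h\in\mathcal{H}_{\mathrm{IF}}$, and the $\bQ$-density of the block structure of $\mathcal{H}_{\mathrm{IF}}$ transfers to dense interleaving of these summands in the sum.

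The main obstacle will be the final step: verifying that the ill-founded tail arranges into exactly $\omega_1^{ck}\cdot\bQ$ rather than something larger. The naive temptation is to recurse---each ill-founded summand $KB(S^{\langle h\rangle}\star T^{\langle k\rangle})$ is itself of the target form---but this risks circularity since $\mathcal{H}_{<h}$ is only an initial segment of $\mathcal{H}$ and need not have the same order type. My preferred route is to avoid this recursion via a direct back-and-forth construction of an isomorphism with $\omega_1^{ck}(1+\bQ)$, leveraging at each stage: density of $\mathcal{H}_{\mathrm{IF}}$ to place a new element between two already-placed ones, and the fact that well-founded contributions of arbitrarily large rank sit between consecutive ill-founded summands (since $\mathcal{H}_{\mathrm{WF}}$-elements of every rank $<\omega_1^{ck}$ appear between any two $\mathcal{H}_{\mathrm{IF}}$-elements). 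This replaces a delicate inductive characterization of Harrison orders with an explicit isomorphism, at the cost of some back-and-forth bookkeeping.
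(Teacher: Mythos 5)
Your setup is fine as far as it goes: the root is indeed the $KB$-maximum, and peeling it off leaves a sum, over the root's children in order of their integer codes, of the $KB$-orders of the subtrees above them. But the rest of the argument rests on a misreading of how those summands are arranged. As you yourself state when writing the sum, the blocks appear as an $\omega$-sequence of convex pieces indexed by code, so the $\+H$-ordering of the first coordinates has nothing to do with how the blocks interleave. Consequently there is no ``dense interleaving of these summands,'' well-founded summands of arbitrarily large rank do not ``sit between consecutive ill-founded summands'' (between two consecutive ill-founded blocks there are only finitely many blocks, of whatever ranks happen to occur), and your lower bound on the well-founded part is also broken: the maximal well-ordered initial segment of the sum consists only of the blocks preceding the first ill-founded block, together with the well-founded initial segment of that block, so ``aggregating'' well-founded summands scattered through the order proves nothing about it. The $1+\bQ$ structure of the final order does not come from the root-level decomposition at all; it lives inside each ill-founded block, and a one-level decomposition plus back-and-forth bookkeeping at the level of summands cannot produce it.

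What is missing is the one fact the proof actually turns on, which you never invoke and which is why the hypothesis on $\+H$ matters: since $\+H$ has no hyperarithmetic descending sequence, $S$, hence $S\star T$, hence every subtree above a child of the root has no hyperarithmetic path, so each block $\+L_i$ is a computable linear order with no hyperarithmetic descending sequence and is therefore either a well order (of type $<\omega_1^{ck}$) or of order type $\omega_1^{ck}(1+\bQ)+\alpha$ for some well order $\alpha$ (Harrison). Granting this, the whole claim reduces to showing that infinitely many blocks are ill-founded --- take children $(a,n)$ with $n$ a fixed child of $T$'s root extendible to a path and $a$ in the ill-founded part of $\+H$ --- since then the sum telescopes: each well-ordered piece, having type $<\omega_1^{ck}$, is absorbed into the next copy of $\omega_1^{ck}$, and $(1+\bQ)\cdot\omega\cong 1+\bQ$. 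Your proposed back-and-forth, carried out honestly, would have to re-derive exactly this structural information about each ill-founded block (well-founded initial segment of type $\omega_1^{ck}$, dense block structure without a last block, etc.), i.e.\ re-prove Harrison's analysis for these particular orders; as sketched, with the density attributed to the arrangement of the summands, it does not go through.
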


\begin{proof}[Proof of claim]
We fix $n \in \omega$ extendable to an infinite path through $T$ and note that $KB(S\star T) \cong \left(\sum_{i \in \omega} \+L_i\right)+1$, where $\+L_i$ is the Kleene-Brouwer ordering of the subtree extending the $i$th child of the root.  As none of these subtrees have a hyperarithmetic path, each $\+L_i$ is either well founded or has order type $\omega_1^{ck}(1+\bQ)+\alpha$ for some (possibly empty) well order $\alpha$.  To establish the claim, it suffices to show that there are infinitely many $i$ such that $\+L_i$ is ill founded.  However, each $i$ corresponding to a pair $(a, n)$ with $a$ in the ill-founded part of $\+H$ is extendible to a path through $S\star T$, and so such an $\+L_i$ is ill founded.  As there are infinitely many such $a$, the claim follows.
\end{proof}

Note that $S$ has the property that if $\sigma \in S$ is extendible to an infinite path, then $\sigma$ has infinitely many immediate children in $S$.  It follows that $S \star T$ has the same property.

Let $\+K$ be the order made from $KB(S\star T)$ by removing the root $\seq{}$.  Then $\+L$ and $\+K$ are our two orders of order type $\omega_1^{ck}(1+\bQ)$.  Observe that if $\tau \in S\star T$, then $\{ \rho \in S\star T: \rho \supset \tau\}$ is a computable, convex set in $\+K$.

Suppose now that $f: \+L \cong \+K$.  We recursively construct a sequence $(\tau_n)_{n \in \omega}$ of elements of $S\star T$ with $\tau_{n} \subset \tau_{n+1}$ and maintain the inductive assumption that $f\circ \iota$ maps a nontrivial interval of $\bQ$ into $\{ \rho \in S \star T : \rho \supset \tau_n\}$ (and thus that this interval of $K$ is ill founded, and so $\tau_n$ is extendible to a path through $S\star T$).  We begin with $\tau_0 = \seq{}$.  

Suppose we have defined $\tau_n$.  Then we let $(\alpha_m)_{m \in \omega}$ enumerate the immediate children of $\tau_n$.  Observe that $\bigsqcup_m \{ \rho \in S\star T : \rho \supseteq \alpha_m\}$ partitions $\{ \rho \in S \star T : \rho \supset \tau_n\}$, each component of the partition is convex in the ordering of $K$, and the components are arranged with order type $\omega$ (in the ordering of $K$).  So there must be rationals $q_0 < q_1$ and an $m$ with $f(\iota(q_0)), f(\iota(q_1)) \in \{ \rho \in S \star T : \rho \supseteq \alpha_m\}$.  We search for such an $m$ and define $\tau_{n+1} = \alpha_m$.  Now, we can observe that $\{\rho \in S\star T : \rho \supset \tau_{n+1}\}$ contains the open interval from $f(\iota(q_0))$ to $f(\iota(q_1))$, and so the inductive assumption is maintained.

Thus $f$ uniformly computes $(\tau_n)_{n \in\omega}$, which is a path through $S\star T$, and therefore it gives a path through $T$.  This completes the proof.
\end{proof}

\section{Scott analysis}\label{sec:scott}

As we consider our intuition for what might make a degree high for isomorphism, we naturally look to the possibility of back-and-forth arguments: the reason that the countable dense linear order without endpoints is computably categorical is that we can computably find the necessary extensions for a partial isomorphism.

A sweeping generalization of this condition is given by the so-called \emph{back-and-forth relations}.

\begin{dfn}
Let $\sA$ and $\sB$ be structures, and let $\bar{a}$ and $\bar{b}$ be finite tuples of the same length in $\sA$ and $\sB$, respectively.  We then define the following relations:
\begin{enumerate}
    \item We say that $(\sA,\bar{a}) \leq_1 (\sB,\bar{b})$ if and only if every finitary $\Sigma_1$ formula true of $\bar{b}$ in $\sB$ is true of $\bar{a}$ in $\sA$.
    \item For $\alpha>1$, we say that $(\sA,\bar{a}) \leq_\alpha (\sB,\bar{b})$ if and only if for every finite $\bar{d} \subseteq \sB$ and every $\beta < \alpha$ there is some finite $\bar{c} \subseteq \sA$ such that $(\sB,\bar{b}\bar{d}) \leq_\beta (\sA,\bar{a}\bar{c})$.
\end{enumerate}
\end{dfn}

In some cases---for instance, the countable dense linear order without endpoints---these are easy to compute.  In general, they are not.  For instance, among computable ordinals, these relations specify the place of each element of $\bar{a}$ in its copy of $\omega^\alpha$ for some appropriate $\alpha$.  These relations are discussed in detail in \cite{ashknight,Barwise1973}.

It is reasonable to hope that the degrees which are uniformly high for isomorphism would be exactly those degrees which can uniformly compute all of these relations.  Although this does not turn out to be true, we believe the concept is interesting and that the separation gives important information about both classes.

\begin{dfn}\label{ScComplete} We say that a degree $\mathbf{d}$ is \emph{Scott complete} if and only if there is an algorithm which, given indices for a computable structure $\sA$ and a Harrison order $\sH$, will give an index for a $\mathbf{d}$-computable function $\mathbf{b}:\left(\sA^{<\omega}\right)^2 \times \sH \to \{0,1\}$ with the following properties (we abbreviate $\mathbf{b}(\bar{a},\bar{b},x)=1$ by $\bar{a} \leq_x \bar{b}$):
\begin{enumerate}
\item If $\mathbf{1}$ is the least element of $\sH$, then $\bar{a} \leq_\mathbf{1} \bar{b}$ if and only if every finitary $\Sigma_1$ formula true of $\bar{b}$ is true of $\bar{a}$.
\item If $x \in \sH$ is not the least element, then $\bar{a} \leq_x \bar{b}$ if and only if for every finite $\bar{d} \subseteq \sA$ and every $y < x$ there is some finite $\bar{c} \subseteq \sA$ such that $\bar{b}\bar{d} \leq_y \bar{a}\bar{c}$.
\end{enumerate}
\end{dfn}

While we do not explicitly name the uniformity in this definition (having nothing to say at this point about a nonuniform variant), it is germane to note that the notion of Scott completeness belongs to the family of ``uniform" highness notions we explore in this paper.  Since we know that no degree can be both high for isomorphism and low for $\omega_1^{ck}$, the following result separates the two classes.

\begin{prop}\label{scLFO1CK}
There is a degree which is Scott complete and low for $\omega_1^{ck}$.
\end{prop}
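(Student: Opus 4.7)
The plan is to produce $\mathbf{d}$ as the Turing degree of a Gandy-basis element of a suitable nonempty $\Sigma^1_1$ class. The strategy is to encode a uniform family of Scott-completeness witnesses into a single real.

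Let $\mathcal{P} \subseteq 2^\omega$ denote the class of reals $\mathbf{B}$, interpreted as an $\omega^2$-indexed sequence $(\mathbf{b}_{i,j})_{i,j \in \omega}$ of functions $\mathbf{b}_{i,j}: (\sA_i^{<\omega})^2 \times \sH_j \to \{0,1\}$, such that each slice $\mathbf{b}_{i,j}$ satisfies conditions (1) and (2) of Definition \ref{ScComplete} with respect to the computable structure $\sA_i$ and the computable linear order $\sH_j$. A member $\mathbf{B} \in \mathcal{P}$ will yield $\mathbf{d} = \deg(\mathbf{B})$ as follows: the computable function $F$ that on input $(i,j)$ returns an index for the Turing reduction reading off the $(i,j)$-slice of its oracle will witness Scott completeness, since $\{F(i,j)\}^\mathbf{B} = \mathbf{b}_{i,j}$ is a good back-and-forth function whenever $\sH_j$ is Harrison.

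The first step is to verify that $\mathcal{P}$ is $\Sigma^1_1$. Condition (1) is a $\Pi^0_2$ biconditional relating $\mathbf{b}_{i,j}(\bar{a},\bar{b},\mathbf{1})$ to the $\Sigma_1$-type inclusion for the expansions $(\sA_i,\bar{a})$ and $(\sA_i,\bar{b})$, and condition (2) is a $\Pi^0_2$ biconditional relating $\mathbf{b}_{i,j}(\bar{a},\bar{b},x)$ to a universal-existential pattern over finite extensions by $\bar{d}$ and $\bar{c}$. Universally quantifying over $(i,j,\bar{a},\bar{b},x)$ preserves being arithmetic, so $\mathcal{P}$ is in particular $\Sigma^1_1$.

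The second step, which I expect to be the main obstacle, is showing $\mathcal{P}$ is nonempty. For each $(i,j)$ I would construct a good $\mathbf{b}_{i,j}$ via the monotone operator $\Phi_{i,j}$ on the complete lattice of $\{0,1\}$-valued functions on $(\sA_i^{<\omega})^2 \times \sH_j$ sending $f$ to the function whose value at $(\bar{a},\bar{b},\mathbf{1})$ is $1$ exactly when the finitary $\Sigma_1$-type of $\bar{b}$ is included in that of $\bar{a}$, and whose value at $(\bar{a},\bar{b},x)$ for $x \neq \mathbf{1}$ is $1$ exactly when $\forall \bar{d}\,\forall y < x\,\exists \bar{c}\,f(\bar{b}\bar{d},\bar{a}\bar{c},y) = 1$. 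By the Knaster--Tarski fixed-point theorem, $\Phi_{i,j}$ has a greatest fixed point, which is by construction a good back-and-forth function. The subtlety this sidesteps is that when $\sH_j$ is ill-founded (as for Harrison orders), the recursion in Definition \ref{ScComplete} does not uniquely determine $\mathbf{b}_{i,j}$; the fixed-point framework provides a canonical existence witness uniformly in $(i,j)$, regardless of whether $\sH_j$ is well-founded, Harrison, or pathological.

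Finally, applying the Gandy Basis Theorem to the nonempty $\Sigma^1_1$ class $\mathcal{P}$ produces $\mathbf{B} \in \mathcal{P}$ with $\omega_1^\mathbf{B} = \omega_1^{ck}$, completing the proof with $\mathbf{d} = \deg(\mathbf{B})$. The nontrivial feature of the argument is that the Gandy-basis element $\mathbf{B}$ may well code very complex information—such as the genuine $\alpha$-back-and-forth relations for $\alpha$ cofinal in $\omega_1^{ck}$—without raising $\omega_1^\mathbf{B}$ above $\omega_1^{ck}$, which is precisely what the Gandy Basis Theorem delivers.
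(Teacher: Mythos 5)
Your proposal is correct and takes essentially the same route as the paper's proof: both encode a witnessing back-and-forth-style function for \emph{every} pair (computable structure, computable linear order) into a single real, note that the resulting class of such reals is arithmetic and hence $\Sigma^1_1$, and then invoke the Gandy Basis Theorem to get a member that is low for $\omega_1^{ck}$. The only difference is the nonemptiness step, where the paper exhibits an explicit per-pair witness (the standard back-and-forth relations on the well-founded initial segment, with a fixed trivial relation beyond it) while you obtain one as a Knaster--Tarski greatest fixed point of the monotone operator defined by conditions (1) and (2); both arguments are valid.
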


\begin{proof}
The two conditions of Definition \ref{ScComplete} can be applied in the context of any linear order $\sL$.  Moreover, for any structure $\sA$ and any linear order $\sL$, there always exists a sequence $(\leq_x : x \in \sL)$ satisfying these conditions, since we can always take the standard back-and-forth relations for the greatest well-founded initial segment of $\sL$, and equality thereafter.

Now consider the set $S$ of all triples $(e,i,\mathbf{b})$ where $e$ is the index for a computable structure, $i$ is the index for a linear order, and $\mathbf{b}$ is a function satisfying the conditions of Definition \ref{ScComplete}.  The set of selectors $\eta:\mathbb{N}^2 \to 2^\omega$ such that for every $(e,i)$ we have $(e,i,\eta(e,i)) \in S$ is now a $\Sigma^1_1$ class.  By the Gandy Basis Theorem, this class has an element which is low for $\omega_1^{ck}$.
\end{proof}

The key difference is that a sequence $(\leq_x : x \in \+L)$ always exists for any structure $\+A$ and any linear order $\+L$.  In contrast, $\Sigma^1_1$ classes do not always have elements, and pairs of structures do not always have isomorphisms between them.

As the proof shows, Scott completeness follows from computing an element of a particular $\Sigma^1_1$ class, and thus by Proposition~\ref{prop:high_for_iso_vs_paths}, highness for isomorphism implies Scott completeness. We will find, however, that combining Scott completeness with an additional property gives a characterization of the degrees that are high for isomorphism (Theorem~\ref{prop:high_for_descending_and_Scott_complete_implies_high_for_isomorphism}).

\section{Computing descending sequences}\label{sec:descending}

Now we consider the task of finding descending sequences in linear orders.  The reader familiar with the theory of computable ordinals and the Kleene-Brouwer ordering may expect this to be the same as the task of finding paths through trees and thus the same as that of finding isomorphisms, but this turns out not to be the case.

\begin{dfn}
If $\+L$ is a linear order, a {\em tight descending sequence} in $L$ is an infinite descending sequence which is unbounded below in the ill-founded part of $\+L$, i.e., if $f$ is a tight descending sequence and $g$ is any descending sequence, then for every $n$ there is an $m$ with $f(m) <_{\+L} g(n)$.
\end{dfn}

\begin{dfn}
A degree $\* d$ is {\em high for (tight) descending sequences} if any computable ill-founded linear order $\+L$ has a $\* d$-computable (tight) descending sequence.

Degrees that are uniformly high for (tight) descending sequences are defined analogously.
\end{dfn}

As the descending sequences through a computable linear order form a $\Pi^0_1$ class, it follows by  Proposition~\ref{prop:high_for_iso_vs_paths} that (uniform) highness for isomorphism implies (uniform) highness for descending sequences.  We will show that this implication is strict.  As the set of indices for computable ill-founded linear orders is $\Sigma^1_1$-complete, Proposition~\ref{prop:jumps_of_high_are_powerful} holds for (uniform) highness for descending sequences in place of (uniform) highness for isomorphism, so the separation will not follow from a simple jump analysis.  In the nonuniform case, the separation turns out to be related to the question of exactly which degrees can be coded into the descending sequences of a linear order; we begin by introducing some notation.

\begin{notation}
For $\sigma$ a nonempty finite sequence, let $l(\sigma) = \sigma(|\sigma|-1)$ be the last element of $\sigma$.

For $\sbar = (\sigma_0, \dots, \sigma_{n-1})$ a collection of finite sequences, write $l(\sbar, i)$ for $l(\sigma_i)$.

For a linear order $\+L$, let $W(\+L)$ denote the (possibly empty) greatest well-founded initial segment.
\end{notation}

\begin{lem}\label{lem:ADS}
If $\+L$ is an infinite computable linear order, then either $\+L$ has a computable descending sequence or it has a $\mathbf{0}'$-computable ascending sequence.
\end{lem}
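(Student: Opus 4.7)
My plan is to split into two cases based on the size of the initial segment of $\+L$ consisting of elements with only finitely many predecessors. For $a \in \+L$ write $p(a)$ for the number of $\+L$-predecessors of $a$, and set $W' = \{a \in \+L : p(a) < \infty\}$. Because every predecessor of an element of $W'$ lies in $W'$, this is an initial segment of $\+L$; and because $a \mapsto p(a)$ is an $<_\+L$-preserving injection from $W'$ into $\omega$, the order type of $W'$ is at most $\omega$.

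If $|W'| = \omega$, I would produce the $\mathbf{0}'$-computable ascending sequence directly. For each $k \in \omega$ there is a unique $a_k \in W'$ with $p(a_k) = k$, and ``$p(c) = k$'' is $\Delta^0_2$ in $c, k$ since ``$p(c) \geq k$'' is $\Sigma^0_1$ and ``$p(c) \leq k$'' is $\Pi^0_1$. So $\mathbf{0}'$ can compute $a_k$ uniformly in $k$, and $(a_k)_{k \in \omega}$ lists $W'$ in $\+L$-order, which is an ascending sequence.

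If $|W'| = N < \infty$, every element of $D := \+L \setminus W'$ has infinitely many predecessors, and $D$ is cofinite in $\+L$. I would then build a purely computable descending sequence by searching, at each stage $s$, for the first $b$ in the underlying $\omega$-enumeration of $\+L$'s domain satisfying $b <_\+L a_s$ and $p(b) \geq N$ (a $\Sigma^0_1$ condition in $b$), with $a_0$ chosen analogously as the first element with at least $N$ predecessors. The key invariant to verify is $a_s \in D$ at every stage: the condition $p(b) \geq N$ forces $b \in D$ because elements of $W'$ have pred-counts $0, 1, \dots, N-1$; and once $a_s \in D$ it has infinitely many preds in $D$, each of pred-count $\infty \geq N$, so the next search terminates and again returns an element of $D$.

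The main obstacle is Case 2: the construction depends on the specific value of $N$, which is not itself computable from $\+L$. This is not a real obstacle because the lemma only asserts existence, and for the true (but perhaps unknown) $N$ the above recipe defines a single total computable function yielding an infinite descending sequence. It is worth pointing out why this nonuniformity is essential: the naive greedy strategy of taking the first $\+L$-predecessor seen at each step can wander into $W'$ and get stuck at the minimum of $\+L$, and the threshold $p(b) \geq N$ is precisely what keeps the construction inside the ill-founded part $D$.
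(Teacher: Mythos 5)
Your proof is correct, but it takes a different route from the paper's. The paper argues contrapositively: assuming $\+L$ has no computable descending sequence, a greedy search shows $\+L$ has a least element and that every non-greatest element has an immediate successor (else the suborder above it would yield a computable descending sequence), and then iterating the successor function from the least element -- each step being a $\Pi^0_1$ condition decidable from $\mathbf{0}'$ -- gives the $\mathbf{0}'$-computable ascending sequence, which is infinite since $\+L$ is infinite. You instead split directly on the initial segment $W'$ of finite-rank elements: when $W'$ is infinite you enumerate it in order by deciding the $\Delta^0_2$ predicate $p(c)=k$ with $\mathbf{0}'$ (this produces essentially the same ascending sequence as the paper's, since the element of rank $k$ is the $k$-fold successor of the least element), and when $W'$ is finite of size $N$ you give an explicit, nonuniform computable descending sequence confined to the cofinite set $D$ via the $\Sigma^0_1$ threshold $p(b)\ge N$ -- a nice constructive counterpart to the paper's purely hypothetical use of ``no computable descending sequence,'' at the cost of the nonuniformity in $N$ (harmless here, as you note, since the lemma is an existence statement, and harmless for the paper's later relativized applications, which are likewise nonuniform). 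One small implementation point: taking literally the \emph{least} $b$ in the domain enumeration satisfying the $\Sigma^0_1$ condition is not a computable choice (ruling out smaller candidates is $\Pi^0_1$); you should take the first $b$ produced by a dovetailed search for a witness pair, which is computable and satisfies the same invariant, so nothing in the argument is affected.
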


\begin{proof}
Suppose $\+L$ has no computable descending sequence.  Then $\+L$ must have a least element, as otherwise we can construct a computable descending sequence $(a_n)_{n \in \omega}$ by setting $a_0$ to be any element and then, given $a_n$, searching for an element less than $a_n$ to get $a_{n+1}$.  For an $x \in \+L$ that is not the greatest element of $\+L$, by applying the same argument to $\{ y \in \+L : x <_{\+L} y\}$, we see that $x$ must have an immediate successor.  Let $z$ be the least element of $\+L$, and for $x \in \+L$ not the greatest element, let $s(x)$ be its immediate successor.  Then $z, s(z), s(s(z)), \dots$ is a $\mathbf{0}'$-computable ascending sequence.
\end{proof}

It is not hard to construct an ill-founded computable linear order such that every descending sequence computes $\emptyset'$.  Similarly, one can construct $n$ ill-founded computable linear orders such that every degree computing a descending sequence through each of them computes $\emptyset^{(n)}$.  This turns out to be the best that is possible.

\begin{prop}\label{thm:descending_sequences_cone_avoidance}
If $\+L_0, \dots, \+L_{n-1}$ are computable ill-founded linear orders and $X \not \in \Delta^0_{n+1}$, then there are descending sequences $f_0, \dots, f_{n-1}$ through $\+L_0, \dots, \+L_{n-1}$, respectively, such that $X \not \le_T f_0 \oplus \dots \oplus f_{n-1}$.
\end{prop}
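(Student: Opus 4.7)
The plan is to prove the theorem by induction on $n$ via iterated cone avoidance. The base case $n=0$ is immediate, since the hypothesis $X\notin\Delta^0_1$ just says $X$ is non-computable, and the conclusion $X\not\le_T\emptyset$ is vacuous.

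For the inductive step, the key ingredient I would establish first is a relativized cone avoidance lemma for descending sequences: for any oracle $Y$, any $Y$-computable ill-founded linear order $\+L$, and any $A\not\le_T Y$, there is an infinite descending sequence $f$ in $\+L$ with $A\not\le_T Y\oplus f$. Granted this, once we have $f_0,\dots,f_{i-1}$ descending in $\+L_0,\dots,\+L_{i-1}$ with $X\not\le_T f_0\oplus\cdots\oplus f_{i-1}$, we apply the lemma with $Y=f_0\oplus\cdots\oplus f_{i-1}$, $A=X$, and $\+L=\+L_i$ to obtain $f_i$ descending in $\+L_i$ with $X\not\le_T f_0\oplus\cdots\oplus f_i$; after $n$ applications, we are done.

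To prove the cone avoidance lemma, I would use a forcing argument. Conditions are finite descending sequences $\sigma$ in $\+L$ whose last element $l(\sigma)$ lies in the ill-founded part of $\+L$, together with Mathias-style reservoir data witnessing extendibility in an arithmetic fashion. At stage $e$, given $\sigma$, I would extend to $\sigma'\supseteq\sigma$ (still extendable) such that either (i) $\Phi_e^{Y\oplus\sigma'}(m)\downarrow\ne A(m)$ for some $m$, or (ii) no extendable $\tau\supseteq\sigma'$ has $\Phi_e^{Y\oplus\tau}(m)\downarrow$ for some $m$. If no such $\sigma'$ existed, then for every extendable $\sigma'\supseteq\sigma$ and every $m$, some further extendable $\tau\supseteq\sigma'$ would make $\Phi_e^{Y\oplus\tau}(m)\downarrow=A(m)$; enumerating consistent convergent values over extensions would then yield a computation of $A$ from $Y$, contradicting $A\not\le_T Y$. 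Interleaving these diagonalization stages with length-extension stages produces the required $f$.

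The main obstacle is controlling extendibility. Since the ill-founded part of $\+L$ is a priori only $\Sigma^1_1$, a naive forcing would only yield cone avoidance against hyperarithmetic sets rather than against arbitrary $A\not\le_T Y$; the Mathias reservoir is needed precisely to make the ``(i) or (ii) exists'' decisions arithmetic in $Y\oplus A$. Initial reservoirs can be produced from the ill-foundedness of $\+L$, and standard bookkeeping preserves them through extension. Once the forcing is set up in this way, both the lemma and the inductive iteration proceed routinely.
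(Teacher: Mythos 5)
Your key lemma is false, and this sinks the inductive plan. You claim: for any oracle $Y$, any $Y$-computable ill-founded linear order $\mathcal{L}$, and any $A \not\le_T Y$, there is a descending sequence $f$ in $\mathcal{L}$ with $A \not\le_T Y \oplus f$. Take $Y = \emptyset$ and $A = \emptyset'$: as noted in the paper immediately before this proposition, one can build a computable ill-founded linear order in which \emph{every} descending sequence computes $\emptyset'$, so no such $f$ exists. The correct calibration is exactly the point of the proposition: with $n$ orders one can force the join of the descending sequences up to $\emptyset^{(n)}$ but no higher, which is why the hypothesis is $X \notin \Delta^0_{n+1}$ rather than mere non-computability. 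Your proposal never uses this quantitative hypothesis anywhere (each inductive step only invokes ``$X$ is not computable from the join so far''), which is a structural sign that the argument cannot be right. Even a repaired one-order lemma (say, requiring $X \notin \Delta^0_2(Y)$) does not iterate in the greedy fashion you describe: after choosing $f_0$ you would need $X \notin \Delta^0_2(f_0)$, a strictly stronger guarantee than $X \not\le_T f_0$, and nothing in your construction maintains such an invariant. This is precisely why the paper's proof forces with \emph{tuples} of finite descending sequences in all $n$ orders simultaneously, together with a case split on whether $k$ of the orders admit $\mathbf{0}^{(k)}$-computable descending sequences; the complexity bookkeeping across all orders at once is where $\Delta^0_{n+1}$ enters.

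There is a second, independent gap in the forcing sketch itself. Extendibility of a finite descending sequence (equivalently, membership of its last element in the ill-founded part of $\mathcal{L}$) is genuinely $\Sigma^1_1$ and in general not arithmetic in anything: the relevant orders may have no hyperarithmetic descending sequences at all, so no ``Mathias-style reservoir'' computable or arithmetic in $Y \oplus A$ can witness extendibility, and the claim that such data makes the ``(i) or (ii)'' decisions arithmetic is unsubstantiated. Moreover, in the failure case of your diagonalization, the identity $A(m) = b \iff (\exists\ \text{extendible } \tau \supseteq \sigma)\,[\Phi_e^{Y\oplus\tau}(m)\!\downarrow\, = b]$ only gives a $\Sigma^1_1(Y)$ definition of each side, hence $A \in \Delta^1_1(Y)$, not $A \le_T Y$, so you do not reach the contradiction you need. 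The paper circumvents this by working with the \emph{computable} set $D$ of all tuples of finite descending extensions (with no extendibility requirement) and recovering the needed control through the $\Sigma^0_1(\mathbf{0}')$ sets $A_k$, the relations $\trianglelefteq$ and $\bot$, and the final argument that failure yields either $X \in \Delta^0_3$ or $\mathbf{0}^{(n)}$-computable descending sequences, contradicting the case hypothesis. As written, your proposal does not contain these ideas, so the proof has a genuine gap rather than being a routine variant.
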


\begin{proof}
The proof proceeds by induction on $n$.

First, suppose there is $k > 0$ such that at least $k$ of the linear orders have a descending $\mathbf{0}^{(k)}$-computable sequence.  If $k = n$, the theorem follows.  Otherwise, without loss of generality, $\+L_{n-k}, \dots, \+L_{n-1}$ have descending $\mathbf{0}^{(k)}$-computable sequences $f_{n-k}, \dots, f_{n-1}$.  Then, as $X \not \in \Delta^0_{n-k+1}(\mathbf{0}^{(k)})$, by the inductive hypothesis relative to $\mathbf{0}^{(k)}$ there are descending sequences $f_0, \dots, f_{n-k-1}$ through $\+L_0, \dots, \+L_{n-k-1}$ such that $X \not \le_T f_0\oplus \dots \oplus f_{n-k-1} \oplus \mathbf{0}^{(k)}$, and thus $X \not \le_T f_0\oplus \dots \oplus f_{n-1}$.

Now suppose there is no such $k$.  We use forcing in which a condition is a tuple $(\sigma_0, \dots, \sigma_{n-1})$ such that for $i < n$, $\sigma_i$ is a finite descending sequence through $\+L_i$, and if $\sigma_i$ is nonempty, $l(\sigma_i) \not \in W(\+L_i)$. We say that  $(\tau_0, \dots, \tau_{n-1})$ extends $(\sigma_0,\dots, \sigma_{n-1})$ if each $\tau_i$ extends $\sigma_i$.  Note that in general the set of conditions is properly $\Sigma^1_1$.  Clearly, a sufficiently generic filter  for this notion of forcing gives a tuple $(f_0,\dots, f_{n-1})$ such that $f_i$ is an infinite descending sequence through $\+L_i$ for $i < n$.  We will show that a sufficiently generic filter gives $f_0\oplus \dots \oplus f_{n-1} \not \ge_T X$.

For $\Phi$ a functional and $\sbar = (\sigma_0, \dots, \sigma_{n-1})$, we will write $\Phi^{\sbar}$ for $\Phi^{\sigma_0 \oplus \dots \oplus \sigma_{n-1}}$.

Suppose $\sbar$ is a condition and $\Phi$ is a Turing functional.  We must show that $\sbar$ has an extension $\tbar$ with $\tbar \Vdash [\Phi^{f_0 \oplus \dots \oplus f_{n-1}} \neq X]$.  If there is an extension $\tbar$ and an $m$ with $\Phi^{\tbar}(m)\converge \neq X(m)$, then we are done, and if there is an $m$ and a $\tbar \preceq \sbar$ such that for all $\rbar \preceq \tbar$, $\Phi^{\rbar}(m)\diverge$, then we are also done.  Now let us suppose towards a contradiction that neither of these hold.

Let $D = \{ (\alpha_0, \dots, \alpha_{n-1}) : \alpha_i \supseteq \sigma_i\}$; $D$ is certainly computable.  We will use $\abar, \bbar, \gbar, \dbar$ for elements of $D$, in contrast with $\sbar, \tbar, \rbar$ for conditions, and as before, we will write $\Phi^{\abar}$ for $\Phi^{\alpha_0\oplus\dots \oplus \alpha_{n-1}}$.

For $\abar, \bbar \in D$, we write $\abar \tleq \bbar$ if $l(\abar, i) \le_{L_i} l(\bbar, i)$ for all $i < n$; we further write $\abar \bot \bbar$ if $\abar \not \tleq \bbar$, $\bbar \not \tleq \abar$, and, for each $i < n$, $l(\abar,i) \neq l(\bbar,i)$.  We define extension on $D$ in the natural fashion.

We construct uniformly $\Sigma^0_1(\mathbf{0'})$ sets $(A_k)_{k \in \omega}$, each a subset of $D$, as follows.
\begin{itemize}
\item $A_0 = \{\sbar\}$.
\item If we see $\abar \in D$ satisfying the following:
\begin{itemize}
\item $\abar$ extends some element of $A_k$;
\item $\Phi^{\abar}(k)\converge$;
\item There are no $\bbar, \bbar' \in D$ and $m \in \omega$ with $\abar \tleq \bbar$, $\abar \tleq \bbar'$ and $\Phi^{\bbar}(m)\converge \neq \Phi^{\bbar'}(m)\converge$; and
\item $\abar \bot \gbar$ for every $\gbar$ already enumerated into $A_{k+1}$;
\end{itemize}
then we enumerate $\abar$ into $A_{k+1}$.
\end{itemize}

\begin{claim}
For every $k$ and $\abar \in A_k$, for some $i < n$, $l(\abar, i) \not \in W(\+L_i)$.
\end{claim}

\begin{proof}
Suppose not and fix an $\abar$ for which this fails.  Then for every condition $\rbar$, $\abar \tleq \rbar$.  Observe that 
\[
X(m) = b \iff \exists \bbar \in D\, [(\abar \tleq \bbar) \wedge \Phi^{\bbar}(m) = b].
\]
This holds because if $X(m) = b$, then by assumption on $\sbar$, a condition $\rbar \preceq \sigma$ with $\Phi^{\rbar}(m) = b$ exists, and this is our witnessing $\bbar$.  Then the existence of the $\rbar$ for $X(m)$ and $\abar \in A_k$ implies there cannot exist a witnessing $\bbar'$ for $b \neq X(m)$.

Thus $X$ is computable, contrary to assumption.
\end{proof}

\begin{claim}
If $A_m$ is finite for every $m \le k$, then $A_k$ contains a condition (and in particular, $A_k \neq \emptyset$).
\end{claim}

\begin{proof}
We proceed by induction on $k$. The result for $k = 0$ is immediate.

Now consider the inductive case $k+1$.  By the inductive hypothesis, we can fix a condition $\abar = (\alpha_0, \dots, \alpha_{n-1}) \in A_k$.  As $|A_{k+1}|$ is finite, we fix $b_i <_{\+L_i} l(\abar, i)$ for each $i < n$ such that for every $\gbar \in A_{k+1}$, if $l(\gbar,i) \not \in W(L_i)$, then $b_i <_{\+L_i} l(\gbar, i)$.  Let $\abar' = (\alpha_0\cat b_0, \dots, \alpha_{n-1}\cat b_{n-1})$.  We can see that $\abar'$ is a condition.  By the previous claim, $\gbar \not \tleq \abar$ for any $\gbar \in A_{k+1}$.

By our assumption on $\sbar$, there is a condition $\tbar \preceq \abar'$ with $\Phi^{\tbar}(n)\converge$.  There cannot be $\bbar, \bbar' \in D$ and $m \in \omega$ blocking $\tbar$'s enumeration into $A_{k+1}$, as both would be conditions extending $\sbar$, and one of them would force $\Phi^{f_0 \oplus \dots \oplus f_{n-1}}(m)\converge \neq X(m)$ contrary to assumption on $\sbar$.

Eventually we will locate $\tbar$.  As $l(\tbar, i) \le_{\+L_i} b_i$ for $i < n$, it cannot be that there is a $\gbar \in A_{k+1}$ such that $\gbar \tleq \tbar$.  Thus if there is $\gbar \in A_{k+1}$ such that $\neg(\gbar \bot \tbar)$, it must be that $\tbar \tleq \gbar$, and thus $\gbar$ is a condition, so $\gbar$ is the desired element of $A_{k+1}$.  Otherwise, $\tbar$ will be enumerated into $A_{k+1}$ and $\tbar$ is the desired element of $A_{k+1}$.
\end{proof}

Note that if $n = 1$, then the $\abar \bot \gbar$ condition on $A_{k+1}$ implies each $A_{k+1}$ is a singleton.  It follows that, contrary to assumption, $\+L_0$ has a $\mathbf{0}'$-computable descending sequence.  Henceforth, we assume $n > 1$.

\begin{claim}
There is a $k$ such that $A_k$ is infinite.
\end{claim}

\begin{proof}
Suppose not and consider the tree $T$ of finite sequences $\pi$ such that for all $k < |\pi|$, $\pi(k) \in A_k$, and for $k < |\pi|-1$, the sequence $\pi(k+1)$ extends $\pi(k)$.  Note that a path through $T$ gives a tuple $(g_0, \dots, g_{n-1})$ such that $g_i$ is a descending sequence through $\+L_i$ extending $\sigma_i$ for $i < n$ and such that $\Phi^{g_0\oplus \dots \oplus g_{n-1}} = X$ is total.  Also, as the $A_k$s are finite and uniformly $\Sigma^0_2$, the class $[T]$ is effectively compact relative to $\mathbf{0}''$.  Thus $X \in \Delta^0_3$ contrary to assumption.
\end{proof}

Now we can fix $k$ such that $A_k$ is infinite.  For $i < n$, let $B_i = \{ l(\abar,i) : \abar \in A_k\}$.  Note that for $\abar, \abar' \in A_k$, since $\abar \bot \abar'$, we have that $l(\abar,i) \neq l(\abar',i)$ for all $i < n$.  Thus each $B_i$ is infinite.  Furthermore, each $B_i$ is $\Sigma^0_2$.

\begin{claim}
Each of the $\+L_i$s has a $\mathbf{0}^{(n)}$-computable descending sequence, contrary to assumption.
\end{claim}

\begin{proof}
Fix $i$ and define a sequence $c_0, \dots, c_{n-1}$ enumerating $\{0, \dots, n-1\}$ as follows:
\begin{itemize}
\item $c_{n-1} = i$;
\item Having defined $c_{j+1}, \dots, c_{n-1}$, by assumption there is
\[
r \in \{0, \dots, n-1\} \setminus \{c_{j+1}, \dots, c_{n-1}\}
\]
such that $L_r$ has no $\mathbf{0}^{(j+1)}$-computable descending sequence.  Let $c_j$ be such an $r$.
\end{itemize}
We now recursively build a sequence of injective functions $h_j: \omega \to A_k$ and sets $C_j \subseteq B_{c_j}$ for $j < n-1$ such that $h_j$ is $\mathbf{0}^{(j+2)}$-computable and $C_j$ is $\Sigma^0_{j+2}$ and infinite.  We construct them as follows:
\begin{itemize}
\item $C_{0} = B_{c_0}$;
\item For $j < n-1$, given $C_j \subseteq B_{c_j} \subseteq \+L_{c_j}$, give $C_j$ the ordering induced by $\+L_{c_j}$.  Then $C_j$ is $\mathbf{0}^{(j+1)}$-computably isomorphic to a $\mathbf{0}^{(j+1)}$-computable linear order.  By Lemma~\ref{lem:ADS} relative to $\mathbf{0}^{(j+1)}$ and the fact that $\+L_{c_j}$ has no $\mathbf{0}^{(j+1)}$-computable descending sequence, there is some $\mathbf{0}^{(j+2)}$-computable ascending sequence through $C_j$.  Each element of this sequence comes from some unique $\abar \in A_k$, so let $h_j$ list these elements of $\abar$ in the order given by the ascending sequence.  That is, the ascending sequence is
\[
l(h_j(0),c_j) <_{\+L_{c_j}} l(h_j(1),c_j) <_{\+L_{c_j}} \dots.
\]
\item For $j < n-2$, given $h_j$, let
\[
C_{j+1} = \{ l(h_j(m),c_{j+1}) : m \in \omega\}.
\]
As $h_j$ is injective and distinct elements of $A_k$ give distinct elements of $B_{c_{j+1}}$, $C_{j+1}$ is infinite.
\end{itemize}
We observe that for $j < n-1$, $\text{range}(h_{j+1}) \subseteq \text{range}(h_j)$.  Thus, there is some injective $g:\omega \to \omega$ such that $h_{j+1} = h_j \circ g$.  By thinning the ascending sequence which gives rise to $h_{j+1}$, we may assume that $g$ is ascending.  This can be done without affecting the complexity of $h_{j+1}$.

It follows that for all $m_0 < m_1$ and $j < n-1$, we have  \[l(h_{n-2}(m_0),c_j) <_{\+L_{c_j}} l(h_{n-2}(m_1),c_j).\]  As $h_{n-2}(m_0)\bot h_{n-2}(m_1)$, it must be true that \[l(h_{n-2}(m_0),i) >_{\+L_i} l(h_{n-2}(m_1),i).\]  As $h_{n-2}$ is $\mathbf{0}^{(n)}$-computable, this is a $\mathbf{0}^{(n)}$-computable descending sequence in $\+L_i$.
\end{proof}
This completes the proof of Proposition~\ref{thm:descending_sequences_cone_avoidance}.
\end{proof}

\begin{lem}\label{lem:no_tight_means_no_loose}
If $\+L$ is a computable ill-founded linear order with no arithmetic tight descending sequences, then for every $n$ there is an $a \in \+L\setminus W(\+L)$ such that $\{z \in \+L : z <_{\+L} a\}$ has no $\Delta^0_n$ descending sequences.
\end{lem}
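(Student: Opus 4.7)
The plan is to prove the contrapositive. I fix $n$ and assume that every $a \in \+L \setminus W(\+L)$ has a $\Delta^0_n$ descending sequence below it; from this I will build an arithmetic tight descending sequence in $\+L$, contradicting the hypothesis of the lemma.

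The first observation I would record is that under this failure assumption the ill-founded part of $\+L$ is arithmetically decidable: $a \in \+L \setminus W(\+L)$ if and only if some $\Phi_e^{\emptyset^{(n-1)}}$ is total, strictly $\+L$-descending, and bounded above by $a$. This is a uniformly arithmetic condition in $a$; the converse direction is automatic since any infinite descending sequence below $a$ witnesses that $\{z \le_\+L a\}$ is ill-founded, so $a \notin W(\+L)$. The same predicate lets me arithmetically locate, for each ill-founded $a$, an index $e$ whose computation actually produces a descending sequence below $a$.

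Next I would fix an effective enumeration $c_0, c_1, c_2, \dots$ of $\+L$ and define $f$ by recursion, maintaining the invariant $f(k) \in \+L \setminus W(\+L)$. Set $f(0)$ to be any ill-founded-part element, found arithmetically. Given $f(k)$, examine $c_k$: if $c_k \in \+L \setminus W(\+L)$ and $c_k <_\+L f(k)$, locate an index $e$ witnessing a $\Delta^0_n$ descending sequence below $c_k$ and set $f(k+1) = \Phi_e^{\emptyset^{(n-1)}}(0)$, so that $f(k+1) <_\+L c_k <_\+L f(k)$; otherwise apply the same move with $a = f(k)$, giving $f(k+1) <_\+L f(k)$. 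Either way $f(k+1)$ sits inside a descending sequence, hence lies in $\+L \setminus W(\+L)$, and the invariant persists.

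Tightness then follows by cases: for $c = c_k \in \+L \setminus W(\+L)$, the first branch gives $f(k+1) <_\+L c_k = c$, while in the second branch $c_k \ge_\+L f(k)$ and $f(k+1) <_\+L f(k) \le_\+L c$, so $f$ strictly passes below $c$ by stage $k+1$. Every step of the recursion uses only arithmetic information about $\+L$ at a level uniformly bounded in $n$, so $f$ is arithmetic, which is the desired contradiction. The main technical point I would need to verify is that the quantifier complexity of the ``locate $e$'' step does not compound over the recursion in $k$; fortunately the search is uniform in $a$, so the entire construction sits inside a single fixed arithmetic level. The combinatorial heart of the argument—alternating descent with a diagonalization covering the ill-founded part—is otherwise routine.
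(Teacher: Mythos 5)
Your proof is correct and follows essentially the same route as the paper: the failure of the conclusion for some $n$ makes $\+L\setminus W(\+L)$ an arithmetic set (characterized exactly as you do, by bounding a $\Delta^0_n$ descending sequence), and one then arithmetically descends through it while passing below each enumerated element to obtain an arithmetic tight descending sequence, contradicting the hypothesis. The paper states this construction only implicitly ("as it has no least element, there is an arithmetic tight descending sequence"), whereas you spell out the recursion and the non-compounding of quantifier complexity, both of which check out.
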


\begin{proof}
Suppose not, and fix an $n$ for which this fails.  Then \[\+L\setminus W(\+L) = \{ a : \text{$a$ bounds a $\Delta^0_n$ descending sequence}\}\] is arithmetic, and as it has no least element, there is an arithmetic tight descending sequence in $\+L$.
\end{proof}

\begin{por}
If $\+L_0, \dots, \+L_n$ are ill-founded linear orders without arithmetic tight descending sequences and $X$ is not arithmetic, then a sufficiently generic filter for the forcing notion described in the proof of Proposition~\ref{thm:descending_sequences_cone_avoidance} will give tight descending sequences through the $\+L_i$ such that the join of these sequences does not compute $X$.
\end{por}

\begin{proof}
Clearly a sufficiently generic filter gives a tight descending sequence.  Given a condition $\sbar$ and a functional $\Phi$, we may first extend $\sbar$ such that each $l(\sbar, i)$ bounds no $\Delta^0_{n+1}$ descending sequence in $\+L_i$.  By the arguments given, $\sbar$ can be extended to a $\tbar$ such that $\tbar \Vdash \Phi^{f_0\oplus \dots\oplus f_{n-1}} \neq X$.
\end{proof}

Proposition~\ref{thm:descending_sequences_cone_avoidance} demonstrates the limit of what can be coded into the descending sequences of finitely many linear orders.  We consider the result to be of independent interest, but the proof also serves as an opportunity to develop the tools we will need to show a similar result for degrees that are high for descending sequences in Theorem~\ref{thm:high_for_descending_sequences_avoids_nonarithmetical}.

\subsection{Degrees that are high for descending sequences}

\begin{thm}\label{thm:high_for_descending_sequences_avoids_nonarithmetical}
If $X$ is not arithmetical, there is a degree that is high for tight descending sequences and does not compute $X$.
\end{thm}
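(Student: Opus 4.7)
The plan is to build the desired degree via forcing adapted from the Porism. Enumerate the computable linear orders as $\+L_0, \+L_1, \dots$, and let $J_{\mathrm{ar}}$ denote the set of indices $i$ such that $\+L_i$ is ill-founded and has an arithmetical tight descending sequence; for each $i \in J_{\mathrm{ar}}$, fix one such $\hat{f}_i$. A forcing condition is a pair $p = (F_p, (\sigma_i^p)_{i \in F_p})$ where $F_p$ is a finite set of indices of ill-founded $\+L_i$'s; $\sigma_i^p$ is an initial segment of $\hat{f}_i$ whenever $i \in F_p \cap J_{\mathrm{ar}}$; and $\sigma_i^p$ is a finite descending sequence in $\+L_i$ with $l(\sigma_i^p) \notin W(\+L_i)$ whenever $i \in F_p \setminus J_{\mathrm{ar}}$ and $\sigma_i^p$ is nonempty. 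Extensions are natural. From a sufficiently generic filter $G$, for each $i$ that enters some $F_p$ we obtain a sequence $f_i = \bigcup_{p \in G} \sigma_i^p$: for $i \in J_{\mathrm{ar}}$, $f_i = \hat{f}_i$, while for $i \notin J_{\mathrm{ar}}$, $f_i$ is a generic tight descending sequence. We encode these uniformly as a single set $D$.

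The density requirements to meet are: (i)~each index of an ill-founded computable linear order eventually enters some $F_p$; (ii)~each $|\sigma_i|$ grows without bound; (iii)~for each $i \notin J_{\mathrm{ar}}$ and each $a \in \+L_i \setminus W(\+L_i)$, some condition achieves $l(\sigma_i) <_{\+L_i} a$ (tightness for $i \in J_{\mathrm{ar}}$ is automatic since $\hat{f}_i$ is tight); and (iv)~for each Turing functional $\Phi$, some condition forces $\Phi^D \neq X$. The first three are routine because $\+L_i \setminus W(\+L_i)$ has no least element when nonempty.

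For (iv), fix a condition $p$ and a functional $\Phi$. Using Lemma~\ref{lem:no_tight_means_no_loose} applied to the sub-orders $\+L_i |_{<l(\sigma_i^p)}$ (which inherit having no arithmetical tight descending sequence from $\+L_i$ when $i \notin J_{\mathrm{ar}}$), extend $p$ to $p'$ as follows: for each $i \in F_p \setminus J_{\mathrm{ar}}$, extend $\sigma_i^p$ so that $l(\sigma_i^{p'})$ bounds no $\Delta^0_n$ descending sequence in $\+L_i$, for a sufficiently large $n$; also add enough new indices from $J_{\mathrm{ar}}^c$ to $F_{p'}$, each equipped with similarly complex initial $\sigma_{i}^{p'}$, so that $|F_{p'} \setminus J_{\mathrm{ar}}|$ is large enough for the argument below. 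Setting $A = \bigoplus_{i \in F_{p'} \cap J_{\mathrm{ar}}} \hat{f}_i$, which is arithmetical, apply the argument of Proposition~\ref{thm:descending_sequences_cone_avoidance} relativized to $A$ to the sub-forcing varying only $\sigma_i$ for $i \in F_{p'} \setminus J_{\mathrm{ar}}$. Since $X$ is not arithmetical, hence not $A$-arithmetical, and each $\+L_i |_{<l(\sigma_i^{p'})}$ for $i \in F_{p'} \setminus J_{\mathrm{ar}}$ has no $A$-arithmetical descending sequence, the relativized argument produces an extension $q$ of $p'$ forcing $\Phi^D \neq X$.

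The main obstacle is that $J_{\mathrm{ar}}$ is not arithmetical (it is $\Sigma^1_1$), so the forcing notion itself is non-effective. However, this is immaterial for the forcing argument: all that is needed is the existence of extensions meeting the countably many dense sets above, and a sufficiently generic filter is built by the standard procedure of iteratively meeting each dense set. The resulting $D$ is then high for tight descending sequences and satisfies $X \not\le_T D$.
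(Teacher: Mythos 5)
Your plan founders on the way you encode the generic. In your forcing, only indices of \emph{ill-founded} orders may enter $F_p$, and your density requirements force the column of $D$ assigned to $\+L_i$ to be nonempty exactly when $\+L_i$ is ill-founded. Hence from $D$ one can enumerate $\{ i : \+L_i \text{ is ill founded}\}$, which (for the standard listing of computable linear orders) is $\Sigma^1_1$-complete under many-one reducibility; so $D$ enumerates every $\Sigma^1_1$ set. By Theorem~\ref{thm:reticence_all_the_same} such a degree is uniformly high for isomorphism, and by Proposition~\ref{prop:high_for_iso_above_hyp} it computes every $\Delta^1_1$ set --- in particular $\emptyset^{(\omega)}$, which is not arithmetical. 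So no generic for your notion can avoid the cone above $X = \emptyset^{(\omega)}$, and your density claim (iv) cannot be true in general. The non-effectiveness of $J_{\mathrm{ar}}$ is therefore not ``immaterial'': the global shape of the forcing (which columns can ever be occupied, and what may be written on them) is itself $\Sigma^1_1$-complete information that leaks into the generic.

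The concrete gap in your argument for (iv) is the phrase ``apply the argument of Proposition~\ref{thm:descending_sequences_cone_avoidance} relativized to $A$ to the sub-forcing varying only $\sigma_i$ for $i \in F_{p'}\setminus J_{\mathrm{ar}}$.'' Forcing $\Phi^D \neq X$ in that sub-forcing does not force it in the full forcing: $\Phi^D$ may query the infinitely many columns added later (each constrained by ill-foundedness, by $J_{\mathrm{ar}}$-membership, and by the fixed $\hat f_{i'}$), and it may exploit the emptiness of the well-founded columns; none of this is controlled by your condition, and the dichotomy (find a disagreeing extension or force divergence) breaks for exactly these queries. The paper's construction is designed to avoid this: conditions are finite partial functions on $\omega^2$ in which uncommitted positions are genuinely free (so they add no computational power and the generic is total, revealing nothing about which columns serve which purpose), the committed descending-sequence columns are reserved for orders \emph{without} arithmetic tight descending sequences, and all other ill-founded orders are handled at once by columns committed to copying $\emptyset^{(m)}$ --- the generic computes every arithmetic set, hence every arithmetic tight descending sequence, with no need to fix particular $\hat f_i$ or to identify $J_{\mathrm{ar}}$. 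Only with that set-up does the relativized argument of Proposition~\ref{thm:descending_sequences_cone_avoidance}, together with Lemma~\ref{lem:no_tight_means_no_loose}, give the cone-avoidance density you need.
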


\begin{proof}
Let $\{\+L_e : e \in \omega\}$ be a listing of the computable ill-founded linear orders without arithmetic tight descending sequences.  This cannot be an effective listing, but its true complexity will not be relevant.

We construct our degree via forcing. Here, a condition is a pair $(\sigma_0, \sigma_1)$ satisfying the following:
\begin{itemize}
\item $\sigma_0: \omega^2 \to \omega$ and $\sigma_1: \omega \to \omega \times \{0,1\}$ are finite partial functions; and
\item If $\sigma_1(n) = (e,0)$, then $\tau = \lambda x.\sigma_0(n,x)$ is a finite descending sequence in $\mathcal{L}_e$ that is extendible to an infinite descending sequence.
\end{itemize}

A condition $\sbar' = (\sigma_0', \sigma_1')$ extends $\sbar = (\sigma_0, \sigma_1)$ if the following hold:
\begin{itemize}
\item $\sigma_0'$ extends $\sigma_0$ and $\sigma_1'$ extends $\sigma_1$;
\item If $\sigma_1(n) = (m,1)$, then for $\tau = \lambda x.\sigma_0(n, x)$ and $\tau' = \lambda x.\sigma_0'(n,x)$, we have $\tau'(x) = \emptyset^{(m)}(x)$ for all $x$ such that $|\tau| \le x < |\tau'|$.
\end{itemize}
Given a filter $F$ for this notion of forcing, we let $f = \bigcup\limits_{(\sigma_0, \sigma_1) \in F} \sigma_0$.  If $F$ is sufficiently generic, then for every $m$ there is an $n$ and a $(\sigma_0, \sigma_1) \in F$ with $\sigma_1(n) = (m,1)$, and for this $n$, $\lambda x.f(n, x) =^* \emptyset^{(m)}$. Thus, $f$ will bound every arithmetic set.  Furthermore, if $F$ is sufficiently generic, then for every $e$ there is an $n$ and a  $(\sigma_0, \sigma_1) \in F$ with $\sigma_1(n) = (e, 0)$.  For this $n$, $\lambda x.f(n,x)$ will be a tight descending sequence through $\+L_e$.  It follows that $f$ is high for descending sequences.

Suppose we have a condition $\sbar$ and a Turing functional $\Phi$.  We wish to argue that $\sbar$ can be extended to force $\Phi^f \neq X$.  Let $m = \max\left\{ m' : \exists n < |\sigma_1|\ \left[\sigma_1(n) = (m', 1)\right]\right\}$.  Let $E = \left\{ e : \exists n < |\sigma_1|\ \left[\sigma_1(n) = (e,0)\right]\right\}$.  Then any $\tbar$ extending $\sbar$ is committed to writing sets $=^* \emptyset^{(m')}$ for some $m' \le m$ on some finitely many columns of $\tau_0$ and to writing descending sequences through $\+L_e$ for $e \in E$ on other columns of $\tau_0$, but is unconstrained on the remaining columns of $\tau_0$.

Let $n = |E|$.  By Lemma~\ref{lem:no_tight_means_no_loose}, we may first extend the descending sequences coded in $\sigma_1$ such that none of them can be extended to $\Delta^0_{n+m+1}$ descending sequences.  Now, working relative to $\emptyset^{(m)}$, the main argument in the proof of Theorem~\ref{thm:descending_sequences_cone_avoidance} shows that we can extend to such a $\tbar$ (we are in the ``no such $k$'' case in the proof of that theorem).

Thus for a sufficiently generic filter, $\Phi^f \neq X$, and the degree of $f$ is our desired degree.
\end{proof}

This stands in contrast with the high for isomorphism degrees, which compute all $\Delta^1_1$ sets (Proposition~\ref{prop:high_for_iso_above_hyp}).

\begin{cor}
There is a degree which is high for tight descending sequences but not high for isomorphism.
\end{cor}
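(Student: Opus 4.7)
The plan is to combine Theorem~\ref{thm:high_for_descending_sequences_avoids_nonarithmetical} with Proposition~\ref{prop:high_for_iso_above_hyp}. The observation is that these two results give essentially orthogonal information: the former guarantees a high for tight descending sequences degree avoiding any fixed non-arithmetical set, while the latter forces every high for isomorphism degree to compute all hyperarithmetic sets. Any $\Delta^1_1$ set which is not arithmetical will therefore witness a separation.

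First I would fix a specific $X$ which is both $\Delta^1_1$ and non-arithmetical; the standard choice is $X = \emptyset^{(\omega)}$ (or any $\Delta^1_1$ set of rank beyond $\omega$), which is manifestly hyperarithmetic and not in any $\Sigma^0_n$. Next, apply Theorem~\ref{thm:high_for_descending_sequences_avoids_nonarithmetical} to this $X$ to obtain a degree $\mathbf{d}$ that is high for tight descending sequences and satisfies $X \not\le_T \mathbf{d}$. Finally, observe that if $\mathbf{d}$ were high for isomorphism, then by Proposition~\ref{prop:high_for_iso_above_hyp} it would compute every $\Delta^1_1$ set, contradicting $X \not\le_T \mathbf{d}$. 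Hence $\mathbf{d}$ is not high for isomorphism.

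There is really no obstacle here, since both ingredients are already established earlier in the section; the corollary is a direct assembly of them. The only subtlety worth flagging in the write-up is to be explicit about the existence of a $\Delta^1_1$ set that is not arithmetical, so that Theorem~\ref{thm:high_for_descending_sequences_avoids_nonarithmetical} can be invoked with its hypothesis verified. This also explains the rhetorical framing of the preceding paragraph: the point of the corollary is that while highness for descending sequences and highness for isomorphism share the same $\Sigma^1_1$-completeness flavor at the level of their associated index sets, the Turing-theoretic strength they impose on a degree diverges precisely in the hyperarithmetic region between the arithmetic sets and $\Delta^1_1$.
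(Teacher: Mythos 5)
Your proof is correct and matches the paper's intended argument: the corollary is stated immediately after the remark contrasting Theorem~\ref{thm:high_for_descending_sequences_avoids_nonarithmetical} with Proposition~\ref{prop:high_for_iso_above_hyp}, and your choice of a $\Delta^1_1$ non-arithmetical witness such as $\emptyset^{(\omega)}$ is exactly the assembly the paper leaves implicit.
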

Of course, the above result is heavily nonuniform.  We now turn our attention to the uniform case, where we will show that we can separate the uniform and non-uniform notions, but not merely by looking at computation of hyperarithmetical degrees.  Toward this end, we begin with the following technical result.

\begin{lem}\label{lem:uniformlyhighdescendingandbounding}
Fix $(U_n)_{n \in\omega}$ an effective listing of $\Sigma^1_1$ subsets of $\omega$.  A degree $\* d$ is uniformly high for descending sequences if and only if there is a $\* d$-computable partial function $f: \omega \to \omega$ such that if $U_n$ is nonempty, then $f(n)\converge \ge \min U_n$.
\end{lem}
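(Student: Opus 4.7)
The proof has two directions; the forward implication is a direct coding argument, while the reverse requires more delicate use of $f$.

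\emph{Forward direction.} Suppose $\*d$ is uniformly high for descending sequences, witnessed by $D \in \*d$ and a total computable $g$ such that $\{g(e)\}^D$ is a descending sequence through $\+L_e$ whenever $\+L_e$ is a computable ill-founded linear order. Given the $\sigmaii$ set $U_n$, fix a uniformly computable family $(T_{n,m})_{m \in \omega}$ of trees in $\omega^{<\omega}$ with $m \in U_n$ iff $T_{n,m}$ has an infinite path. Form the tree $T_n$ whose root has children $(0),(1),(2),\ldots$ and has a shifted copy of $T_{n,m}$ planted below child $(m)$; then $T_n$ has an infinite path iff $U_n$ is nonempty, so $\+L_n := KB(T_n)$ is a uniformly computable linear order that is ill-founded iff $U_n \ne \emptyset$. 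Let $e_n$ be its index. Applying the uniform witness, $s := \{g(e_n)\}^D$ is a descending sequence through $\+L_n$ whenever $U_n$ is nonempty. The key observation is that the first coordinates along any descending sequence in a Kleene-Brouwer ordering are weakly decreasing nonnegative integers, so they eventually stabilize at some $m^*$, and the tail of the descent is an infinite path through $T_{n,m^*}$, whence $m^* \in U_n$. Define $f(n)$ to be the first coordinate of the first term of $s$ of positive length; then $f$ is $\*d$-computable and $f(n)\converge \ge m^* \ge \min U_n$ whenever $U_n \ne \emptyset$.

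\emph{Reverse direction.} Given $f$, I plan to construct, uniformly in $e$, a $\*d$-computable descending sequence in $\+L_e$ whenever $\+L_e$ is computable ill-founded. Proceeding inductively: given $a_0 >_{\+L_e} \cdots >_{\+L_e} a_{k-1}$ with each $a_i$ in the ill-founded part of $\+L_e$, consider the $\sigmaii$ set $V_k$ of those $a <_{\+L_e} a_{k-1}$ that themselves lie in the ill-founded part. This is nonempty, so $b_k := f(\mathrm{ind}(V_k))\converge$ with $b_k \ge \min V_k$, guaranteeing some $a \le b_k$ in $V_k$; the task is to $\*d$-computably identify one.

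\emph{Main obstacle.} The crux is that $f$ may converge spuriously on empty $\sigmaii$ sets, so a single $f$-query cannot certify a candidate as genuinely extendable. The plan is to dovetail $f$-queries across increasingly refined $\sigmaii$ sets: for each candidate $a \le b_k$ and each depth $\ell$, apply $f$ to the index of the $\sigmaii$ set of length-$\ell$ descending extensions of $(a_0,\ldots,a_{k-1},a)$ whose endpoint remains in the ill-founded part. Genuine candidates keep all these sets nonempty (so the bounds returned by $f$ are guaranteed), while for spurious candidates the sets eventually become empty, at which point the bound $f(\mathrm{ind}(V_k)) \ge \min V_k$ can be leveraged to force inconsistencies among the values returned by $f$ at different depths. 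The main technical point is to show that these inconsistencies suffice to filter out the spurious convergences without assuming full $\sigmaii$-decidability; handling this filtering cleanly is where the proof will need the most care.
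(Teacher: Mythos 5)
Your forward direction is essentially the paper's argument: the paper forms the sum of Kleene--Brouwer orders $\+L_0 + \+L_1 + \cdots$ and defines $f(n)$ from the block containing the first element of the descending sequence, which is the same computation as your stabilizing first coordinate. One slip in your justification: the tail of a KB-descending sequence is \emph{not} an infinite path through $T_{n,m^*}$ (descending sequences in $KB(T)$ do not compute paths through $T$ --- that distinction is the main theme of this section of the paper). What you actually get is that the tail is an infinite descending sequence inside the copy of $KB(T_{n,m^*})$ sitting below the node $(m^*)$, which still shows $T_{n,m^*}$ is ill-founded and hence $m^* \in U_n$; so the conclusion $f(n)\converge \ge m^* \ge \min U_n$ survives with that one-line repair.

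The reverse direction, however, has a genuine gap, exactly at the point you flag. You have the right sets (your $V_k$ is the paper's $U_{n_{i+1}} = \{x \in U_{n_0} : x <_{\+L} a_i\}$, where $U_{n_0} = \+L \setminus W(\+L)$), but the difficulty you then try to attack --- that $f$ may converge with meaningless values on indices of empty $\Sigma^1_1$ sets, so no single query certifies a candidate --- is dissolved in the paper not by any filtering or consistency-checking scheme but by never needing certification at all. The key observation you are missing is monotonicity: the ill-founded part of $\+L$ is upward closed under $\le_{\+L}$. So one simply takes $a_{i+1}$ to be the $\+L$-\emph{rightmost} element of the finite set $\{0, \dots, f(n_{i+1})\}$ lying $\+L$-below $a_i$; since $f(n_{i+1}) \ge \min U_{n_{i+1}}$ guarantees that some element of this finite set is below $a_i$ and in the ill-founded part, the rightmost such element is automatically in the ill-founded part, and the inductive invariant is maintained with no verification whatsoever. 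Every set queried is provably nonempty, so $f$'s behavior on empty sets never enters the argument. Your proposed dovetailing, by contrast, has no mechanism by which it could succeed: $f$ carries no promise on empty sets, so the values it returns at your various ``depths'' need never exhibit any detectable inconsistency, and a spurious candidate can look exactly like a genuine one forever. As written, the descending sequence is never actually constructed, so the right-to-left implication remains unproven.
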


\begin{proof}
Suppose $\*d$ is uniformly high for descending sequences.  We construct $f$ as follows.  On input $n$, for each $m \in \omega$, we let $T_m$ be a computable tree having a path if and only if $m \in U_n$.  We take $\+L_m$ to be the Kleene-Brouwer ordering of $T_m$, so $U_n$ is nonempty if and only if $\+L = \+L_0 + \+L_1 + \dots$ is ill founded.  We ask $\* d$ to provide us with a descending sequence in $\+L$ as in the definition of uniformly high for descending sequences.  If $\* d$ produces an output, we define $f(n) = m$, where $m$ is such that the first element of the putative descending sequence occurs in $\+L_m$.

If $U_n$ is truly nonempty, then $\* d$ will produce output, and the first element of this output will bound a descending sequence, and so some $\+L_{m'}$ with $m' \le m$ will be ill founded and thus $m' \in U_n$.

\smallskip

Now suppose $\* d$ computes such an $f$.  Given an ill-founded linear order $\+L$, we construct a descending sequence as follows.  First, we fix $n_0$ such that $U_{n_0} = \+L\setminus W(\+L)$.  This is nonempty by assumption, and so $f(n_0)\converge$ with $\{0, \dots, f(n_0)\} \cap U_{n_0} \neq \emptyset$.  Let $a_0$ be the $\+L$-rightmost element of $\{0, \dots, f(n_0)\}$; this gives us $a_0 \in U_{n_0}$.

Given $a_i \in U_{n_0}$, we fix $n_{i+1}$ such that $U_{n_{i+1}} = \{x \in U_{n_0} : x <_{\+L} a_i\}$.  This is nonempty by assumption, so $f(n_{i+1})\converge$ with $\{0, \dots, f(n_{i+1})\} \cap U_{n_{i+1}} \neq \emptyset$.  Let $a_{i+1}$ be the $\+L$-rightmost element amongst those $\{0, \dots, f(n_{i+1})\}$ which are to the left of $a_i$; this gives us $a_{i+1} \in U_{n_{i+1}}$.

Now the sequence $(a_i)_{i \in \omega}$ is a descending sequence in $\+L$ and is uniformly computable from $f$ and an index for $\+L$.
\end{proof}

\begin{thm}\label{prop:pa_over_unif_desc_seq_is_unif_iso}
If $\* d$ is uniformly high for descending sequences and $\* b$ is PA over $\* d$, then $\* b$ is uniformly high for isomorphism.
\end{thm}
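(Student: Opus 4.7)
By Proposition~\ref{prop:high_for_iso_vs_paths} it suffices to show that $\mathbf{b}$ is uniformly high for paths. So fix a computable tree $T\subseteq\omega^{<\omega}$ with $[T]\ne\emptyset$, given by an index. The plan is to construct, uniformly from that index, a nonempty $\Pi^0_1(\mathbf{d})$ class $\mathcal{C}\subseteq 2^\omega$ whose elements decode to paths through $T$, and then invoke $\mathbf{b}$'s PA-over-$\mathbf{d}$ property to uniformly pick $X\in\mathcal{C}$.

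Applying Lemma~\ref{lem:uniformlyhighdescendingandbounding} to the $\Sigma^1_1$ sets $U_\sigma=\{m:(\exists n\le m)\,\sigma\cat n\text{ extends to a path in }T\}$ gives a partial $\mathbf{d}$-computable $h:\omega^{<\omega}\to\omega$ such that whenever $\sigma$ is extendable in $T$, $h(\sigma)\converge$ and some $\sigma\cat n$ with $n\le h(\sigma)$ is extendable. Define
\[
T^*=\{\sigma\in T:(\forall i<|\sigma|)[h(\sigma\uhr i)\converge\wedge\sigma(i)\le h(\sigma\uhr i)]\}.
\]
Recursively picking an extendable child bounded by $h$ shows $[T^*]\ne\emptyset$, and $[T^*]$ equals the class $\{F\in[T]:(\forall i)[h(F\uhr i)\converge\Rightarrow F(i)\le h(F\uhr i)]\}$, which is $\Pi^0_1(\mathbf{d})$ in $\omega^\omega$: its failure is witnessed either by $F\uhr n\notin T$ (computable) or by some $h(F\uhr i)$ converging to a value strictly below $F(i)$ (genuinely $\Sigma^0_1(\mathbf{d})$).

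The technical heart of the proof is converting this $\Pi^0_1(\mathbf{d})$ class in $\omega^\omega$, whose bound $h$ is only partial, into a $\Pi^0_1(\mathbf{d})$ class $\mathcal{C}$ in $2^\omega$. My plan is to encode each $F(n)$ as an $\lceil\log_2(h(F\uhr n)+1)\rceil$-bit block preceded by a unary convergence stamp claiming a stage at which $h$ has converged on the current prefix. The class $\mathcal{C}$ demands only that each stamp be honest (a $\Sigma^0_1(\mathbf{d})$-detectable violation, since $h_s(\sigma)\converge$ can be compared with the claimed value) and that decoded prefixes remain in $T$ (computable). The subtle point is to structure the encoding so that honesty of the stamps forces parsing of every $X\in\mathcal{C}$ to advance infinitely often --- thereby producing a full path --- while keeping $\mathcal{C}$ at complexity $\Pi^0_1(\mathbf{d})$ rather than slipping to $\Pi^0_2(\mathbf{d})$. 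This is the main obstacle, since the partiality of $h$ naturally pushes ``parsing yields an infinite path'' into $\Pi^0_2$, and ruling out stalling $X$'s via a single $\Pi^0_1(\mathbf{d})$ condition requires a careful encoding design; nonemptiness of $\mathcal{C}$ will follow by directly encoding any $F\in[T^*]$ together with true convergence stages, since along such an $F$ every $h(F\uhr n)$ converges.

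Once such a $\mathcal{C}$ is in hand, $\mathbf{b}$'s PA-over-$\mathbf{d}$ property uniformly selects $X\in\mathcal{C}$, and decoding produces a path in $[T]$. All steps are uniform in the index of $T$, so $\mathbf{b}$ is uniformly high for paths and hence, by Proposition~\ref{prop:high_for_iso_vs_paths}, uniformly high for isomorphism.
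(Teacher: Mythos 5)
Your opening moves match the paper's: reduce via Proposition~\ref{prop:high_for_iso_vs_paths} to uniformly computing paths, and use Lemma~\ref{lem:uniformlyhighdescendingandbounding} to obtain a $\*d$-computable bounding function. But the way you apply the lemma creates an obstacle that you then leave unresolved, and that unresolved step is the real content of the theorem. Because your $\Sigma^1_1$ sets $U_\sigma$ are indexed by individual nodes, your bound $h$ is node-dependent and only partial, so $[T^*]$ is a $\Pi^0_1(\*d)$ class in $\omega^\omega$ with no total $\*d$-computable bound; PA over $\*d$ only gives members of nonempty $\Pi^0_1(\*d)$ classes in $2^\omega$ (equivalently, effectively $\*d$-compact classes), and an arbitrary Baire-space $\Pi^0_1(\*d)$ class cannot in general be converted to one (otherwise every PA degree would be high for paths, contradicting Proposition~\ref{prop:high_for_iso_above_hyp} together with the existence of low PA degrees). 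Your proposed repair---stamp-and-block coding into $2^\omega$ with ``honesty'' of convergence stamps as the closed condition---does not close this gap: the only stamp violations detectable by a closed ($\Sigma^0_1(\*d)$-complement) condition are claims contradicted by observed convergence, so a member of $\+C$ may legally stall forever at a decoded prefix $\sigma \in T$ that is \emph{not} extendible and on which $h$ genuinely diverges (the lemma promises nothing when $U_\sigma = \emptyset$, and nothing in your conditions keeps the decoded sequence inside the extendible part of $T$). Requiring ``every stamp terminates'' or ``every decoded prefix lies in $T^*$'' is inherently $\Pi^0_2(\*d)$, exactly as you fear, so such junk members cannot be excluded and the PA oracle may return one.

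The paper sidesteps this by making the bound depend only on the \emph{level}, not the node: having fixed bounds $f(n_0),\dots,f(n_{k-1})$, it takes $U_{n_k} = \{ m : (\exists \sigma \in T_k)\ \sigma\cat m \text{ is extendible in } T\}$, where $T_k$ is the finite box of all strings respecting the earlier bounds. Since each $U_{n_k}$ quantifies over the whole finite box, it is nonempty whenever $[T]\neq\emptyset$, so $f(n_k)\converge$ for every $k$, and $\{\sigma \in T : (\forall i<|\sigma|)\ \sigma(i)\le f(n_i)\}$ is an infinite, $\*d$-computable, finitely branching subtree of $T$ with a total $\*d$-computable bound; its paths form an effectively $\*d$-compact class, to which PA over $\*d$ applies uniformly (non-extendible nodes in the pruned tree are harmless, by K\"onig's lemma). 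If you want to salvage your write-up, replace your per-node sets $U_\sigma$ with these per-level sets; the stamp encoding can then be discarded entirely.
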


\begin{proof}
Fix a computable ill-founded tree $T$.  We show how to uniformly prune $T$ to a $\* d$-computable, finitely branching tree (with a $\* d$-computable bound on the branches).  Then, as $\* b$ is PA over $\* d$, it will uniformly compute a path through said tree. 

 We fix $f$ as in Lemma~$\ref{lem:uniformlyhighdescendingandbounding}$ and proceed recursively.  First, we define $n_0$ such that $U_{n_0} = \{ m \in \omega : \text{$\seq{m}$ is extendible in $T$}\}$.  Given $n_0, \dots, n_{k-1}$, we then define \[T_k = \{\sigma \in 2^{k}: (\forall i < k)\, \sigma(i) \le f(n_i)\}\] and, then,  $n_k$ such that \[U_{n_k} = \{ m \in \omega : (\exists \sigma \in T_k)\, \text{$\sigma\cat m$ is extendible in $T$}\}.\] Inductively, using the properties of $f$, each $T_k$ contains a string extendible in $T$.  Then the tree $\{ \sigma\in T : (\forall i < |\sigma|)\, \sigma(i) \le f(n_i)\}$ is our pruned tree.\end{proof}

\begin{cor}
If $\* d$ is uniformly high for descending sequences, then $\* d$ computes every $\Delta^1_1$ set.
\end{cor}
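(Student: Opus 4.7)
The plan is to combine Theorem~\ref{prop:pa_over_unif_desc_seq_is_unif_iso} with a cone-avoidance argument. Fix a $\Delta^1_1$ set $X$ and suppose, toward a contradiction, that $X \not\le_T \*d$. I would like to produce a degree $\*b$ PA over $\*d$ that fails to compute $X$: Theorem~\ref{prop:pa_over_unif_desc_seq_is_unif_iso} will then force $\*b$ to be uniformly high for isomorphism, and in particular high for isomorphism, while Proposition~\ref{prop:high_for_iso_above_hyp} applied to $\*b$ will yield $X \le_T \*b$, giving a contradiction.

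To find such a $\*b$, I would invoke the relativized cone-avoidance basis theorem. The collection of $\{0,1\}$-valued functions that are DNC relative to $\*d$ is a nonempty $\Pi^0_1(\*d)$ class, so, since $X \not\le_T \*d$, it contains an element $f$ such that $X$ is not computable from $f$ joined with a representative of $\*d$. The degree of this join is then PA over $\*d$ and does not compute $X$, as required.

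The only ingredient beyond the results already established in the excerpt is the relativized cone-avoidance basis theorem, which is completely standard; I do not foresee any serious obstacle. The substance of the corollary is carried entirely by Theorem~\ref{prop:pa_over_unif_desc_seq_is_unif_iso}, together with the observation from Proposition~\ref{prop:high_for_iso_above_hyp} that high-for-isomorphism degrees sit above all of $\Delta^1_1$; cone avoidance is just the glue that transfers information from the $\*d$-PA cone back down to $\*d$ itself.
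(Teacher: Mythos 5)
Your argument is correct and is essentially the paper's own proof: both fix $X \in \Delta^1_1$ with $X \not\le_T \*d$, invoke the cone-avoidance basis theorem relative to $\*d$ to obtain a degree $\*b$ PA over $\*d$ not computing $X$, and then derive a contradiction from Theorem~\ref{prop:pa_over_unif_desc_seq_is_unif_iso} together with Proposition~\ref{prop:high_for_iso_above_hyp}. The extra detail you give about realizing PA over $\*d$ via the $\Pi^0_1(\*d)$ class of $\{0,1\}$-valued $\*d$-DNC functions is fine and standard.
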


\begin{proof}
Fix $X \in \Delta^1_1$, and suppose $\* d$ does not compute $X$ for a contradiction.  By the cone avoidance basis theorem~\cite{GaKrTa60}, there is a $\* b$ that is PA over $\* d$ which does not compute $X$.  But then $\* b$ is uniformly high for isomorphism, contradicting Proposition~\ref{prop:high_for_iso_above_hyp}.
\end{proof}

Thus, we cannot separate uniform highness for descending sequences from highness for isomorphism simply by looking at the sorts of degrees they compute as we did for the nonuniform case.  Nevertheless, they are separate classes of degrees.  The remainder of this section is devoted to separating these classes, though we will need to develop a number of tools first.

\subsection{Computing jump hierarchies}\label{sec:jumps}

Given a computable linear order $\sL$, a \emph{jump hierarchy} on $\sL$ is a function $h:\sL \to 2^\omega$ such that for all $x \in \sL$, $h(x) = \bigoplus_{y <_{\+L} x} h(y)'$.  For every computable well order, of course, an initial segment of the standard hyperarithmetical hierarchy constitutes a jump hierarchy \cite{Sacks1990}.  There are, however, linear orders which do not admit a jump hierarchy, and this property is not invariant under isomorphism.  In particular, there are Harrison orders with jump hierarchies and those without.

The terminology for jump hierarchies varies in the literature.  A jump hierarchy is sometimes called a \emph{jump structure} or a \emph{Turing jump hierarchy}.

In any case, computing a jump hierarchy on a long linear order is a strong standard for a degree.  For instance, no hyperarithmetical degree can compute a jump hierarchy on any Harrison order.  This gives rise to the following definition.

\begin{dfn} We say that a degree $\mathbf{d}$ is \emph{jump complete} if and only if there is an algorithm which, given an index for a computable Harrison order which admits a jump hierarchy, will compute such a hierarchy from oracle $\mathbf{d}$.
\end{dfn}

\begin{prop} Every degree which is uniformly high for isomorphism is jump complete.\end{prop}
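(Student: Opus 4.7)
The plan is to reduce the claim to the fact, already in hand, that a degree uniformly high for isomorphism is uniformly high for paths, and hence uniformly computes elements of nonempty $\Sigma^1_1$ classes (Proposition~\ref{prop:high_for_iso_vs_paths} together with the Observation immediately following the definition of uniformly high for paths). So the work reduces to exhibiting the collection of jump hierarchies on a given computable order as a uniformly $\Sigma^1_1$ class.

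First I would fix a natural coding of a function $h : \mathcal{H} \to 2^\omega$ by a single subset of $\omega$, namely $\hat h = \{\langle x,e\rangle : x \in \mathcal{H},\ e \in h(x)\}$. Using this coding, the requirement that $h(x) = \bigoplus_{y <_{\mathcal{H}} x} h(y)'$ for every $x \in \mathcal{H}$ translates into the arithmetical (in fact $\Pi^0_2$) condition that for all $x \in \mathcal{H}$ and all pairs $\langle e,y\rangle$ with $y <_{\mathcal{H}} x$, the bit $\langle x, \langle e,y\rangle\rangle$ of $\hat h$ is $1$ iff $\Phi_e^{h(y)}(e)\converge$. Since $\mathcal{H}$ is computable uniformly in its index, this defines a $\Pi^0_2$ (hence $\Sigma^1_1$) subset of $2^\omega$ uniformly in an index for $\mathcal{H}$; call it $J_{\mathcal{H}}$.

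Next, assuming $\mathcal{H}$ admits a jump hierarchy, $J_{\mathcal{H}}$ is nonempty. Since $\mathbf{d}$ is uniformly high for isomorphism, Proposition~\ref{prop:high_for_iso_vs_paths} gives $D \in \mathbf{d}$ and a total computable $g$ such that $\{g(k)\}^D$ is a path through the $k$th $\Pi^0_1$ class whenever it is nonempty, and by the Observation and the standard uniform presentation of a $\Sigma^1_1$ class as the projection of a $\Pi^0_1$ tree in $(\omega^\omega)^2$, this extends uniformly to $\Sigma^1_1$ classes. Composing this with the effective procedure producing an index for $J_{\mathcal{H}}$ from an index for $\mathcal{H}$ yields a single algorithm that, on input an index for any computable Harrison order $\mathcal{H}$ admitting a jump hierarchy, outputs an index for a $D$-computable such hierarchy. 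This is precisely the definition of jump completeness.

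I do not expect any substantive obstacle: the content of the proof is entirely the bookkeeping in the first paragraph (checking that ``is a jump hierarchy'' is $\Pi^0_2$ uniformly in the order), which is straightforward from the definition. The only mild subtlety is ensuring the uniformity all the way through, which is handled by tracing indices through the (already uniform) reductions $\Sigma^1_1 \leadsto \Pi^0_1$ and uniformly high for isomorphism $\leadsto$ uniformly high for paths.
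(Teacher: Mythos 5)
Your proposal is correct and is essentially the paper's own argument: the paper simply notes that the jump hierarchies on a given computable order form a $\Sigma^1_1$ class (uniformly in an index for the order) and that a uniformly high for isomorphism degree uniformly computes an element of any nonempty $\Sigma^1_1$ class via Proposition~\ref{prop:high_for_iso_vs_paths}. The only difference is that you spell out the routine verification that ``is a jump hierarchy'' is arithmetical ($\Pi^0_2$) in a code for the hierarchy, which the paper leaves implicit.
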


\begin{proof}
Given a Harrison order $\sH$, the set of jump hierarchies on $\sH$ is a $\Sigma^1_1$ class.  A degree that is uniformly high for isomorphism can compute an element of such a class as long as it is nonempty.
\end{proof}

\begin{prop}\label{scNOTjc} There is a degree that is Scott complete but not jump complete.\end{prop}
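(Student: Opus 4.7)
The plan is to refine the construction in Proposition~\ref{scLFO1CK}, which applies Gandy Basis to the $\Sigma^1_1$ class of Scott selectors to obtain a Scott complete degree of low $\omega_1$-ordinal. I would strengthen this so that the resulting degree additionally fails to compute any jump hierarchy on a fixed computable Harrison order $\mathcal{H}_0$ admitting one; such an $\mathcal{H}_0$ exists by the remark preceding the definition of jump completeness.

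Concretely, I would enlarge the Scott selector class to a $\Sigma^1_1$ class $S^{*}$ consisting of pairs $(\eta, f)$ in which $\eta$ is a Scott selector as in the proof of Proposition~\ref{scLFO1CK} and $f:\omega\to\omega$ supplies, for each Turing reduction index $e$, finite evidence that $\Phi_e^{\eta}$ is not a jump hierarchy on $\mathcal{H}_0$: either a stage at which some query required to evaluate the jump-hierarchy equation diverges, or an $x \in \mathcal{H}_0$ with $\Phi_e^{\eta}(x)\converge$ but $\Phi_e^{\eta}(x) \neq \bigoplus_{y <_{\mathcal{H}_0} x} \Phi_e^{\eta}(y)'$. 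Since the Scott selector condition is $\Sigma^1_1$ and the existence of such an $f$ is arithmetic relative to $\eta$, the class $S^{*}$ is $\Sigma^1_1$. Assuming $S^{*}$ is nonempty, Gandy Basis produces $(\eta, f) \in S^{*}$ with $\omega_1^{\eta \oplus f} = \omega_1^{ck}$, and the degree $\mathbf{d}$ of $\eta \oplus f$ is Scott complete (via $\eta$) and not jump complete (uniformly certified by $f$).

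The main obstacle is therefore showing that $S^{*}$ is nonempty: one must exhibit a Scott selector $\eta_0$ whose Turing degree computes no jump hierarchy on $\mathcal{H}_0$. I would address this by a finite-extension construction exploiting the flexibility in clause (2) of Definition~\ref{ScComplete}: on the ill-founded part of each linear order $\sL_i$ presented to the selector, any relation satisfying the stability recursion suffices, and many incomparable choices are available. Iterating through Turing indices $e$, at each stage I would commit to enough values of $\eta_0$ to force $\Phi_e^{\eta_0}$ to violate the jump-hierarchy equation at some $x \in \mathcal{H}_0$, while preserving the ability to extend $\eta_0$ to a genuine Scott selector on every other $(e', i)$. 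The delicate point is that commitments made to defeat one reduction interact with the Scott selector requirements on other structures and orders; verifying that all requirements can be met simultaneously reduces to a combinatorial analysis of the back-and-forth constraints on the ill-founded parts, and this is where the real work of the argument lies.
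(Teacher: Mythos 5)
Your plan hinges entirely on the nonemptiness of the class $S^{*}$, i.e.\ on exhibiting a Scott selector $\eta_0$ whose degree computes \emph{no} jump hierarchy on the fixed order $\mathcal{H}_0$, and this is exactly the step you do not prove. It does not follow from Proposition~\ref{scLFO1CK}: lowness for $\omega_1^{ck}$ is compatible with computing a jump hierarchy on $\mathcal{H}_0$ (apply the Gandy Basis Theorem to the nonempty $\Sigma^1_1$ class of jump hierarchies on $\mathcal{H}_0$ itself to get one of low $\omega_1$), so some genuinely new cone-avoidance argument is needed. The finite-extension sketch you offer does not supply it: the values of a Scott selector on the well-founded parts of the orders $\sH$ are uniquely determined (they are the standard back-and-forth relations, of unbounded hyperarithmetic complexity), so you cannot freely ``commit to enough values of $\eta_0$''; the freedom lives only on the ill-founded parts, and you give no argument that a reduction $\Phi_e$ can always be defeated there while preserving extendibility to a genuine selector --- that is precisely the hard content, and you acknowledge it is unproved. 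A smaller but real defect: there is no ``finite evidence'' of divergence, so your witness function $f$ as described is not well defined; this is harmless only because the condition ``for all $e$, $\Phi_e^{\eta}$ is not a jump hierarchy on $\mathcal{H}_0$'' is already arithmetic in $\eta$, so $f$ can be dropped.

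The paper avoids this difficulty entirely and its proof is much softer: take the degree $\*d$ of Proposition~\ref{scLFO1CK}, which is Scott complete and low for $\omega_1^{ck}$. Jump completeness is a \emph{uniform} requirement over the set of indices of computable linear orders admitting jump hierarchies, which by Harrington's theorem is $\Sigma^1_1$-complete; hence, exactly as in Proposition~\ref{prop:jumps_of_high_are_powerful}(2), any jump complete degree $\*d$ has $\*d'' \ge_T \+O$. But then $\+O$ would be hyperarithmetic in $\*d$, contradicting $\omega_1^{\*d} = \omega_1^{ck}$. In other words, the paper exploits the complexity of recognizing \emph{which} orders admit jump hierarchies, rather than the (stronger, and in your proposal unestablished) claim that some Scott complete degree fails to compute a jump hierarchy on a single fixed Harrison order. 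As it stands, your argument has a genuine gap at its central step.
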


\begin{proof}
By Proposition~\ref{scLFO1CK}, there is a degree that is Scott complete and low for $\omega_1^{ck}$. On the other hand, Harrington (unpublished; see~\cite{JLGip}) showed that the set of computable linear orders admitting jump hierarchies is $\Sigma^1_1$-complete, and so the same argument as in Proposition~\ref{prop:jumps_of_high_are_powerful}(2) shows that if $\*d$ is jump complete, then $\*d'' \ge_T \+ O$.
\end{proof}

Jump hierarchies are interesting in their own right, but they will also be an important tool to separating uniform highness for descending sequences from highness for isomorphism.  In particular, in Corollary~\ref{cor:separating_uniformly_high_desc_from_high_iso} we will construct a degree $\*d$ which is uniformly high for descending sequences and a linear order $\+L$ admitting jump hierarchies, and such that $\*d$ does not compute any jump hierarchy on $\+L$.  This gives the following.

\begin{prop} There is a degree that is uniformly high for descending sequences but not jump complete.\end{prop}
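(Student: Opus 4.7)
The plan is to deduce the proposition essentially as a packaging of the forthcoming Corollary~\ref{cor:separating_uniformly_high_desc_from_high_iso}. That result supplies a degree $\*d$ which is uniformly high for descending sequences together with a computable linear order $\+L$ admitting a jump hierarchy such that $\*d$ computes no jump hierarchy on $\+L$. Once such an $\+L$ is a Harrison order (or can be replaced by one with the same obstruction), the definition of jump completeness fails for $\*d$ directly: any putative algorithm witnessing jump completeness, when given an index for $\+L$, would be forced to produce a jump hierarchy on $\+L$ from oracle $\*d$, contradicting the defining property of the pair $(\*d,\+L)$.

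The only bookkeeping I would need to verify is that the $\+L$ produced by Corollary~\ref{cor:separating_uniformly_high_desc_from_high_iso} is (or can be replaced by) a Harrison order. If the construction already produces a Harrison order, the deduction is immediate. Otherwise, I would replace $\+L$ by $\+L' = \+L \cdot (1 + \bQ)$ (arranged so the overall order type is $\omega_1^{ck}(1+\bQ)$, i.e.\ that of a Harrison order). One then checks three things: (i) given a jump hierarchy $h$ on $\+L$, one can extend $h$ to a jump hierarchy on $\+L'$ by iterating the jump along the added rational tail in the standard way, so $\+L'$ indeed admits a jump hierarchy; (ii) $\+L'$ is a computable Harrison order by construction; and (iii) any jump hierarchy on $\+L'$ restricts, by a computable operation, to a jump hierarchy on $\+L$, so $\*d$ still cannot compute a jump hierarchy on $\+L'$. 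Hence the jump-completeness algorithm for $\*d$ must fail on input $\+L'$.

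The hard part is entirely concentrated in Corollary~\ref{cor:separating_uniformly_high_desc_from_high_iso}; once that corollary is in hand, the present proposition is a one-line consequence together with the routine conversion above. I do not expect any additional technology (forcing, basis theorems, etc.) to be required beyond what is already invoked for the corollary and the observation, used in Proposition~\ref{scNOTjc}, that jump hierarchies on Harrison orders are the natural benchmark for jump completeness.
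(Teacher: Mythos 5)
Your main route is exactly the paper's: the proposition is read off from the construction behind Corollary~\ref{cor:separating_uniformly_high_desc_from_high_iso}, and no conversion is needed because the order there (coming from Theorem~\ref{thm:jump_structures_not_fast_growing_property}) is already a Harrison order admitting a jump structure, on which the degree computes no jump hierarchy since it computes no element of the associated $\Pi^0_1$ class. One caution about your contingency plan: step (i), extending a jump hierarchy from $\+L$ to $\+L\cdot(1+\bQ)$ ``by iterating the jump along the added rational tail in the standard way,'' is not a valid move --- there is no transfinite recursion along an ill-founded tail, and the existence of jump hierarchies on ill-founded orders is precisely the delicate, presentation-dependent issue (Harrington's $\Sigma^1_1$-completeness result); also $\+L\cdot(1+\bQ)$ need not have Harrison order type for arbitrary $\+L$. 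Since the fallback is never needed here, this does not affect the correctness of your deduction of the proposition.
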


\subsection{$\Pi^1_1$-tracing of majorized classes}

We continue developing the tools necessary for our intended separation result. Here, we consider traceability.

\begin{dfn}
For $\sigma, \tau \in \omega^{\le \omega}$, we say that $\sigma$ {\em majorizes} $\tau$ if for every $x \in \dom(\sigma)\cap \dom(\tau)$, $\sigma(x) \ge \tau(x)$.
\end{dfn}

The following is an effectivization of results surrounding Laver forcing in set theory (see Bartoszynski~\cite{bartjud95} \S 7.3.D).

\begin{dfn}
An {\em order} is a nondecreasing function $h: \omega \to \omega$ such that $h(0) > 0$ and $\lim_s h(s) = \infty$.

For $h$ an order and $f \in \omega^\omega$, an {\em $h$-trace of $f$} is a sequence $(V_n)_{n \in \omega}$ such that for all $n$, $|V_n| \le h(n)$ and $f\uhr{n} \in V_n$.

An $h$-trace $(V_n)_{n \in \omega}$ is $\Pi^1_1$ if the $V_n$s are uniformly $\Pi^1_1$.
\end{dfn}

Our goal is the following:
\begin{lem}\label{lem:tracing}
If $\+P \subset \omega^\omega$ has compact closure, $\Phi$ is a Turing functional, and $f \in \omega^\omega$ is such that for every $g$ majorizing $f$, $\Phi^g \in \+P$, then for every computable order $h$, there is an element of $\+P$ with a $\Pi^1_1$ $h$-trace.
\end{lem}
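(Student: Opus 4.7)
The plan is to build the required $\Pi^1_1$ $h$-trace by combining compactness with a fusion-style thinning argument keyed to the order $h$, and then extracting a lightface $\Pi^1_1$ presentation by avoiding any direct reference to $f$.

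First, I would set up the $\Sigma^1_1$ tree $T = \{\sigma \in \omega^{<\omega} : (\exists g \text{ majorizing } f)\,[\Phi^g \succeq \sigma]\}$. By the hypothesis, every $\Phi^g$ appearing here lies in $\+P$, so $T$ is contained in the tree of initial segments of $\overline{\+P}$; by compactness of $\overline{\+P}$, each level $T \cap \omega^n$ is finite, and the path $\Phi^f$ witnesses $T$'s infinitude. Compactness also supplies a (nonconstructive) bound $b(n) = \max\{X(n) : X \in \overline{\+P}\}$.

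Second, to avoid the nonconstructive parameter $f$ and instead work lightface $\Pi^1_1$, I would replace $T$ by the larger, uniformly $\Pi^1_1$ tree $S_n = \{\sigma \in \omega^n : \sigma \preceq X \text{ for some } X \in \overline{\+P}\}$, which is still finite at each level by compactness and still contains every initial segment of the path $\Phi^f$. The point is that this definition makes no reference to $f$: it talks only about $\overline{\+P}$, whose closure in $\omega^\omega$ can be given a parameter-free $\Pi^1_1$ description under mild definability assumptions on $\+P$.

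Third, to enforce $|V_n| \leq h(n)$ while maintaining an infinite $\+P$-path, I would perform a fusion in the style of Bartoszyński–Judah: inductively define a nested sequence of $\Pi^1_1$-definable subsets $V_n \subseteq S \cap \omega^n$ of size at most $h(n)$, pruning at each level so that at least one extendible branch (a branch lying in $\overline{\+P}$) survives. Because $h$ is an order with $h(n) \to \infty$ and the ambient tree is finitely branching, the inductive quota can always be met, and the nested limit yields a sequence $(V_n)$ together with an element $f^* \in \overline{\+P}$ whose initial segments are traced. A separate compactness argument (or pulling back along $\Phi$ using the hypothesis) then upgrades $f^* \in \overline{\+P}$ to $f^* \in \+P$.

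The main obstacle is preserving lightface $\Pi^1_1$-definability across the fusion: the natural notion of ``extendible node'' in $S$ is $\Sigma^1_1$ rather than $\Pi^1_1$, so the selection step cannot be based on direct extendibility. I expect to resolve this by an ordinal-rank argument that replaces extendibility with a $\Pi^1_1$-rank condition (selecting, at level $n$, the $h(n)$ strings of largest rank in a fixed $\Pi^1_1$ rank function on $S$), or failing that by an appeal to $\Pi^1_1$ uniformization (Kondo's theorem) to extract a $\Pi^1_1$ choice function witnessing the requisite extendibility; the hypothesis on $f$ and the compactness of $\+P$ enter only as the soft existence statement that such an extendible path exists, which the uniformization then converts into a $\Pi^1_1$-definable witness.
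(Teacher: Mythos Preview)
Your proposal has a genuine gap, and it stems from abandoning the majorization structure too early.

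The central problem is your step two. The lemma places \emph{no} definability hypothesis on $\+P$; it is an arbitrary subset of $\omega^\omega$ with compact closure. So the tree $S_n$ of initial segments of elements of $\overline{\+P}$ need not have any lightface description at all, and your remark ``under mild definability assumptions on $\+P$'' is simply adding a hypothesis not in the statement. Once you pass to $S_n$, you have no $\Pi^1_1$ handle on anything. Relatedly, your upgrade step ``$f^* \in \overline{\+P}$ to $f^* \in \+P$'' is unsupported: compactness does not move points from a closure into the original set, and you have no preimage under $\Phi$ to pull back along, since $S_n$ retains no link to $\Phi$ or to the set of $g$ majorizing $f$.

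The paper's argument works on the \emph{domain} side rather than the range. It introduces Laver trees (trees in which every node above the stem has infinitely many children) because a stemless Laver tree always contains a path majorizing any given $f$; thus thinning the space of oracles $g$ to a Laver tree preserves the hypothesis $\Phi^g \in \+P$. The $\Pi^1_1$ content comes from a rank function (``Laver ranking'') $\dot{r}_{A_\rho,T}$ attached to the computable sets $A_\rho = \{\sigma : \rho \subseteq \Phi^\sigma\}$ and hyperarithmetic Laver trees $T$: for such $A,T$ the relation $\dot{r}_{A,T}(\sigma) = \alpha$ is $\Pi^1_1$, because it can be read off any $(A,\+H,T)$ Laver ranking for a fixed Harrison order $\+H$. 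The construction then runs for $\omega_1^{ck}$ steps, at each stage choosing a $\rho$ with finite Laver rank and refining the tree; the parameter $f$ enters only in the verification that such a $\rho$ exists (via the compactness of $\overline{\+P}$), never in the definition of the trace $V_m$. This is the missing idea: you need a rank on the \emph{oracle} side tied to the computable sets $A_\rho$, not a rank on $S$ tied to the undefinable set $\+P$. Your suggestion of a ``$\Pi^1_1$ rank function on $S$'' points in the right direction but is aimed at the wrong tree, and Kondo uniformization does not help because the relation you want to uniformize (extendibility in $S$) is $\Sigma^1_1$, not $\Pi^1_1$.
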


The proof of this lemma will require a number of ingredients. We begin with the following definitions.

\begin{dfn} \hfill \\
\begin{enumerate}
    \item A {\em Laver tree with stem $\tau$} is a tree $T \subseteq \omega^{<\omega}$ with $\tau \in T$ such that for every $\sigma \in T$ with $\sigma \not \subset \tau$, there are infinitely many $n \in \omega$ with $\sigma\cat n \in T$.

\item A {\em stemless Laver tree} is a Laver tree with stem $\seq{}$.

\item A set $A \subseteq T$ {\em covers} $T$ if every path through $T$ meets an element of $A$.
\end{enumerate}
\end{dfn}

\begin{notation}
If $T$ is a Laver tree with stem $\tau$ and $\sigma \supseteq \tau$ is on $T$, then
\[
T^\sigma = \{ \rho \in T: \rho \supseteq \sigma\} \cup \{ \rho : \rho \subseteq \sigma \}.
\]
\end{notation}
Observe that $T^\sigma$ is a Laver tree with stem $\sigma$, and also that if $T$ is a stemless Laver tree, then for every $f \in \omega^\omega$ there is a $g \in [T]$ majorizing $f$.
Our intention is to build a $\Pi^1_1$ sequence $(V_n)_{n \in \omega}$ and a stemless Laver tree $T$ such that for every $g \in [T]$ with $\Phi^g$ total, $(V_n)_{n \in \omega}$ is an $h$-trace for $\Phi^g$.

\begin{dfn}
Suppose $T$ is a Laver tree with stem $\tau$ and $A \subseteq T$ is upwards closed in $T$.  Let $\+L$ be a linear order with distinguished least and greatest elements, respectively 0 and $\infty$, and a successor function $+1$ (with $\infty+1 = \infty$).  An {\em $(A, \+L, T)$ Laver ranking} is a function $r: \{ \sigma \in T: \tau \subseteq \sigma\} \to \+L$ satisfying the following:
\begin{itemize}
\item $r(\sigma) = 0$ if and only if $\sigma \in A$; and
\item For $\sigma \not \in A$, $r(\sigma) = \liminf_{\sigma\cat n \in T} [r(\sigma\cat n)+1]$.
\end{itemize}
\end{dfn}

Observe that for $\+L$ countable, the collection $LR(A,\+L,T)$ of $(A,\+L,T)$ Laver rankings is an arithmetic class relative to $A, \+L$ and $T$.

\begin{lem}
For any Laver tree $T$ with stem $\tau$ and $A \subseteq T$ upwards closed, an $(A, \omega_1\cup \{\infty\},T)$ Laver ranking exists.
\end{lem}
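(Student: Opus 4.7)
The plan is to build $r$ by transfinite recursion on countable ordinals, after first reformulating the liminf condition in a way that avoids circularity. The key observation is the standard fact that $\liminf_n s_n \le \gamma$ iff $\{n : s_n \le \gamma\}$ is infinite; applied with $s_n = r(\sigma\cat n) + 1$, this says (for $\gamma > 0$) that $\liminf_n[r(\sigma\cat n)+1] \le \gamma$ iff infinitely many $n$ with $\sigma\cat n \in T$ satisfy $r(\sigma\cat n) < \gamma$. Thus the Laver ranking condition for $\sigma \not\in A$ is equivalent to saying $r(\sigma)$ is the least ordinal $\gamma$ for which infinitely many children have rank $< \gamma$, or $\infty$ if no countable such $\gamma$ exists.

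Writing $T' = \{\sigma \in T : \tau \subseteq \sigma\}$, I would construct by recursion an $\subseteq$-increasing chain $(A_\alpha)_{\alpha < \omega_1}$ of subsets of $T'$, intended to represent $\{\sigma : r(\sigma) \le \alpha\}$. Set $A_0 = A \cap T'$, and for $\alpha \ge 1$,
\[
A_\alpha = A_{<\alpha} \,\cup\, \{\sigma \in T' \setminus A : \text{infinitely many } n \text{ with } \sigma\cat n \in T \text{ satisfy } \sigma\cat n \in A_{<\alpha}\},
\]
where $A_{<\alpha} = \bigcup_{\beta < \alpha} A_\beta$. Since $T'$ is countable, the chain stabilizes at some $\alpha^* < \omega_1$. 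Define $r(\sigma) = \min\{\alpha : \sigma \in A_\alpha\}$ for $\sigma \in A_{\alpha^*}$, and $r(\sigma) = \infty$ otherwise.

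The verification of the two clauses is then routine. Clause (1) holds because no $\sigma \in T' \setminus A$ can enter $A_0$ while every $\sigma \in A \cap T'$ is already there. For clause (2), if $\sigma \not\in A$ has countable rank $\alpha \ge 1$, then membership in $A_\alpha \setminus A_{<\alpha}$ forces infinitely many children to lie in $A_{<\alpha}$ (giving $\liminf \le \alpha$) while for each $\beta < \alpha$ only finitely many children lie in $A_{<\beta}$ (ruling out $\liminf \le \beta$), so $\liminf = \alpha$. If instead $r(\sigma) = \infty$, then by stabilization at $\alpha^*$ only finitely many children lie in $A_{\alpha^*}$, so for every countable $\gamma$ only finitely many children have rank $< \gamma$, forcing $\liminf = \infty$ in agreement with the convention $\infty + 1 = \infty$.

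The principal technical point---not really an obstacle---is isolating the equivalent form of the liminf condition in terms of ``infinitely many children of rank less than $\gamma$''; once that reformulation is in hand, the transfinite recursion, the stabilization argument, and the case analysis for $\sigma \in A$, for $\sigma \in A_{\alpha^*} \setminus A$, and for $\sigma \in T' \setminus A_{\alpha^*}$ all proceed by straightforward bookkeeping at successor, limit, and $\infty$ stages.
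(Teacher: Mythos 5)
Your proof is correct and takes essentially the same route as the paper: a transfinite recursion of length $\omega_1$ that assigns rank $\alpha$ to an unranked node once infinitely many of its children are already ranked, stabilizes at a countable stage by countability of the tree, and sets $r = \infty$ off the stabilized domain (your sets $A_\alpha$ are just the paper's partial rankings $r_\alpha$ viewed as their sublevel sets). The explicit liminf reformulation and clause-by-clause verification you give are exactly the details the paper compresses into ``Clearly $r$ is as required.''
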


\begin{proof}
We construct an increasing sequence of partial functions $(r_{\alpha})_{\alpha < \omega_1}$.  We begin by defining $r_0(\sigma) = 0$ for $\sigma \in A$ and $r_0(\sigma)\diverge$ for $\sigma \not \in A$.

Having defined $r_\beta$ for $\beta < \alpha$, we let $r_{<\alpha} = \bigcup_{\beta < \alpha}r_\beta$.  If $r_{<\alpha}(\sigma)\converge$, then we define $r_\alpha(\sigma) = r_{<\alpha}(\sigma)$.  If $r_{<\alpha}(\sigma)\diverge$ but there are infinitely many $n$ such that $\sigma\cat n \in T$ and $r_{<\alpha}(\sigma\cat n)\converge$, then we define $r_\alpha(\sigma) = \alpha$.  Otherwise, we leave $r_\alpha(\sigma)$ undefined.

By cardinality considerations, there is $\lambda <\omega_1$ such that $r_\lambda = r_{\lambda+1}$.  We define $r(\sigma) = r_\lambda(\sigma)$ for $\sigma \in \dom(r_\lambda)$ and $r(\sigma) = \infty$ for $\sigma \not \in \dom(r_\lambda)$.

Clearly $r$ is as required.
\end{proof}
Let $\dot{r}_{A,T}$ be the $(A, \omega_1\cup\{\infty\}, T)$ Laver ranking just constructed.  Lemmas~\ref{lem:no_r_smaller} and~\ref{lem:wf_ranks_same} will imply that it is unique.  Observe that for sufficiently large countable $\lambda$, $\dot{r}_{A,T}$ is an $(A, \lambda \cup \{\infty\}, T)$ Laver ranking as well.  This Laver ranking tells us about the existence of Laver trees.

\begin{dfn}
For $T$ a Laver tree with stem $\tau$ and $A \subseteq T$ upwards closed, define $T_A$ recursively:
\begin{itemize}
\item If $\sigma \subseteq \tau$, then $\sigma \in T_A$;
\item If $\sigma \in T_A$ with $\sigma \supseteq \tau$ and $\sigma\cat n \in T$ with $\dot{r}_{A,T}(\sigma\cat n) < \dot{r}_{A,T}(\sigma) < \infty$, then $\sigma\cat n \in T_A$;
\item If $\sigma \in T_A$ with $\sigma \supseteq \tau$ and $\sigma \in A$, then for every $\sigma\cat n \in T$,  $\sigma\cat n \in T_A$; and
\item If $\sigma \in T_A$ with $\sigma \supseteq \tau$ and $\sigma\cat n \in T$ with $\dot{r}_{A,T}(\sigma\cat n) = \dot{r}_{A,T}(\sigma) = \infty$, then $\sigma\cat n \in T_A$.
\end{itemize}
\end{dfn}

We now present some basic facts about these $T_A$s.

\begin{lem}
If $T$ is a Laver tree with stem $\tau$ and $\dot{r}_{A,T}(\tau) = \infty$, then $T_A$ is a Laver tree and no element of $[T_A]$ meets $A$.

If $T$ is a Laver tree with stem $\tau$ and $\dot{r}_{A,T}(\tau) < \infty$, then $T_A$ is a Laver tree covered by $A \cap T_A$.

In either case, for all $\sigma \in T_A$ with $\sigma \supseteq \tau$, $\dot{r}_{A, T}(\sigma) = \dot{r}_{A, T_A}(\sigma)$.
\end{lem}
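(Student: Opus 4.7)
The plan is to handle the two cases of $\dot{r}_{A,T}(\tau)$ separately and then to prove the rank-preservation clause by a single transfinite induction.

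First, assume $\dot{r}_{A,T}(\tau) = \infty$. I will show by induction along $T_A$ that every $\sigma \in T_A$ with $\sigma \supseteq \tau$ satisfies $\dot{r}_{A,T}(\sigma) = \infty$ and has infinitely many immediate successors in $T_A$. The key observation is that at a node of rank $\infty$, the defining liminf equation forces all but finitely many $T$-children to themselves have rank $\infty$; since $T$ is Laver, infinitely many such $T$-children still remain, and clause~$4$ of the definition of $T_A$ adds exactly these. So $T_A$ is Laver with stem $\tau$, and since $A$-nodes have rank~$0$, no node of $T_A$ above $\tau$ lies in $A$, which also gives that no path through $T_A$ meets $A$.

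Next, assume $\dot{r}_{A,T}(\tau) < \infty$. For the Laver property, fix $\sigma \in T_A$ with $\sigma \supseteq \tau$. If $\sigma \in A$, clause~$3$ places $\sigma\cat n$ in $T_A$ for every $T$-child, of which there are infinitely many. If $\sigma \notin A$, I need infinitely many $n$ with $\sigma\cat n \in T$ and $\dot{r}_{A,T}(\sigma\cat n) < \dot{r}_{A,T}(\sigma)$; this follows by unpacking the liminf defining $\dot{r}_{A,T}(\sigma)$, splitting into the successor and limit subcases for this ordinal. In either case, clause~$2$ then yields infinitely many $T_A$-successors of $\sigma$. For coverage, observe that along any path $f$ through $T_A$, as long as $f\uhr n \notin A$ the rank $\dot{r}_{A,T}(f\uhr n)$ strictly decreases at the next step; well-foundedness of the ordinals then forces $f\uhr n \in A$ at some $n$.

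For the rank-preservation clause, I will prove by transfinite induction on $\dot{r}_{A,T}(\sigma)$ that $\dot{r}_{A\cap T_A, T_A}(\sigma) = \dot{r}_{A,T}(\sigma)$ for every $\sigma \in T_A$ with $\sigma \supseteq \tau$. The base case $\dot{r}_{A,T}(\sigma) = 0$ is immediate, since then $\sigma \in A \cap T_A$. For $0 < \dot{r}_{A,T}(\sigma) = \alpha < \infty$, the $T_A$-successors of $\sigma$ are exactly the $T$-successors of rank strictly less than $\alpha$, each of which has matching rank in $T_A$ by induction, so the task reduces to verifying that restricting the liminf to this subcollection preserves the value $\alpha$. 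In case~1 above, $A \cap T_A = \emptyset$, so no base node is ever available for the inductive construction of $\dot{r}_{A\cap T_A, T_A}$; this forces $\dot{r}_{A\cap T_A, T_A}(\sigma) = \infty$ for every $\sigma \in T_A$ with $\sigma \supseteq \tau$, matching the left-hand side.

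The main obstacle is the liminf-restriction argument inside the inductive step. For $\alpha = \gamma + 1$ a successor, one must see that eventually all $T$-children of $\sigma$ have rank $\geq \gamma$ and that infinitely many have rank $= \gamma$, so restricting to the $T_A$-children of rank $< \alpha$ leaves cofinitely many of rank exactly $\gamma$ and preserves the liminf at $\alpha$. For $\alpha$ a limit, no $T$-child can have rank-plus-one equal to $\alpha$, so the $T$-children split cleanly into those of rank $< \alpha$ and those of rank $\geq \alpha$; the original liminf value $\alpha$ forces infinitely many children into the first class, approaching $\alpha$ from below, so the restricted liminf remains exactly $\alpha$.
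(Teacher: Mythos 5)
Your proposal is correct and follows essentially the same route as the paper's (much terser) proof: in the $\infty$ case show inductively that every node of $T_A$ above $\tau$ has $\dot{r}_{A,T}$-rank $\infty$ and branches infinitely via the fourth clause, so $A\cap T_A$ is empty and both rankings are identically $\infty$; in the finite case use the liminf equation for the Laver property, strict rank decrease along paths for coverage, and transfinite induction on $\dot{r}_{A,T}(\sigma)$ for rank preservation. Your liminf-restriction analysis (successor/limit split) just spells out the detail the paper leaves implicit in its "by induction on $\dot{r}_{A,T}(\sigma)$" step.
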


\begin{proof}
Suppose $\dot{r}_{A,T}(\tau) = \infty$.  By induction on $|\sigma|$, if $\tau \subseteq \sigma \in T_A$, then $\dot{r}_{A,T}(\sigma) = \infty$.  As $T_A \cap A = \emptyset$, all $\sigma \supseteq \tau$ on $T_A$ have $\dot{r}_{A, T_A}(\sigma) = \infty$.

Now suppose $\dot{r}_{A,T}(\tau) < \infty$.  Then for any $g \in [T_A]$, $\dot{r}_{A,T}$ is decreasing along $g$ after $\tau$ until it reaches $0$.  For $\sigma \in T_A$ with $\sigma \supseteq \tau$, by induction on $\dot{r}_{A, T}(\sigma)$, $\dot{r}_{A, T}(\sigma) = \dot{r}_{A, T_A}(\sigma)$.
\end{proof}

\begin{lem}\label{lem:ranks_downward_closed}
For any $A \subseteq T$, $\text{range}\,(\dot{r}_{A,T})\setminus\{\infty\}$ is closed downwards.
\end{lem}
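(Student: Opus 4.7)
The plan is to prove this by transfinite induction on $\alpha \in \text{range}(\dot{r}_{A,T}) \setminus \{\infty\}$, showing that if $\alpha$ is in the range then every $\beta \le \alpha$ is as well. The base case $\alpha = 0$ is vacuous, so I will focus on pulling out, given a witness $\sigma$ with $\dot{r}_{A,T}(\sigma) = \alpha > 0$, other witnesses with smaller ranks from among the children $\sigma\cat n \in T$.

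Since $\alpha > 0$ forces $\sigma \notin A$, the defining equation gives $\alpha = \liminf_{\sigma\cat n \in T} [\dot{r}_{A,T}(\sigma\cat n)+1]$. The crucial observation is how this $\liminf$ constrains the values $\dot{r}_{A,T}(\sigma\cat n)$:
\begin{itemize}
\item For every $\gamma < \alpha$, cofinitely many children satisfy $\dot{r}_{A,T}(\sigma\cat n)+1 > \gamma$.
\item The infimum over any cofinite set of children must be $\le \alpha$, since otherwise the supremum defining the $\liminf$ would exceed $\alpha$ (ordinal arithmetic: any successor or $\infty$ strictly above a limit $\alpha$ is $\ge \alpha+1$).
\end{itemize}
In the successor case $\alpha = \gamma+1$, these two facts combine to force infinitely many children to have $\dot{r}_{A,T}(\sigma\cat n) = \gamma$, so $\gamma$ is in the range and the inductive hypothesis applied to $\gamma$ covers every $\beta < \gamma$. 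In the limit case, the two facts together force the sequence $(\dot{r}_{A,T}(\sigma\cat n))_{\sigma\cat n\in T}$ to be cofinal in $\alpha$ below $\alpha$; hence for any $\beta < \alpha$ we can locate some child $\sigma\cat n$ with $\beta \le \dot{r}_{A,T}(\sigma\cat n) < \alpha$, and the inductive hypothesis applied to $\dot{r}_{A,T}(\sigma\cat n)$ gives $\beta$ in the range.

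The only real subtlety, and the thing worth being careful about, is the ordinal-arithmetic unpacking of ``$\liminf$ of a sequence in $\omega_1 \cup \{\infty\}$ equals the limit ordinal $\alpha$.'' The danger is concluding that infinitely many children have rank exactly $\alpha-1$ in the limit case (which is meaningless). The right statement to extract and use is that $\{\dot{r}_{A,T}(\sigma\cat n) : \sigma\cat n \in T\}$ has elements cofinal in $\alpha$ but strictly below $\alpha$; once this is stated correctly the induction is immediate. There is no interaction with the tree structure of $T$ beyond the fact that Laver trees guarantee infinitely many children at $\sigma \notin A$, so $\liminf$ is taken over a genuinely infinite index set.
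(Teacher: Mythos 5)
Your proof is correct, and it is essentially the paper's argument in different packaging: the key point in both is that $\dot{r}_{A,T}(\sigma)=\alpha=\liminf_{\sigma\cat n\in T}[\dot{r}_{A,T}(\sigma\cat n)+1]$ forces, for each $\beta<\alpha$, some child with rank in $[\beta,\alpha)$. The paper simply iterates this observation to build a descending chain of ranks all $\ge\beta$, which by well-foundedness must terminate at a node of rank exactly $\beta$, rather than running a transfinite induction on $\alpha$ with a successor/limit case split.
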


\begin{proof}
Fix $\beta \le \alpha$ and $\sigma$ with $\dot{r}_{A,T}(\sigma) = \alpha$.  We must show that there is a $\tau$ with $\dot{r}_{A,T}(\tau) = \beta$.  We construct a sequence $\sigma = \sigma_0 \subset \sigma_1 \subset \dots$ with $\beta \le \dot{r}_{A,T}(\sigma_{i+1}) < \dot{r}_{A,T}(\sigma_i)$.

Suppose we have defined $\sigma_i$.  If $\dot{r}_{A,T}(\sigma_i) = \beta$, we are done.  Otherwise, there is an $n$ with $\beta \le \dot{r}_{A,T}(\sigma_i\cat n) < \dot{r}_{A,T}(\sigma_i)$.  Define $\sigma_{i+1} = \sigma_i\cat n$ for such an $n$.

By the well-foundedness of the ordinals, this sequence must terminate in a $\sigma_i$ with $\dot{r}_{A,T}(\sigma_i) = \beta$.
\end{proof}

We will sometimes abuse notation by writing $\dot{r}_{A, T}$ when $A \not \subseteq T$, in which case this should be understood as $\dot{r}_{A\cap T, T}$.  This will not be a problem because of the following two lemmas.

\begin{lem}\label{lem:smaller_set_bigger_laver_rank}
Let $T$ be a Laver tree with stem $\tau$, and let $A \subseteq B \subseteq T$.  Then for all $\sigma \supseteq \tau$ with $\sigma \in T$, $\dot{r}_{B, T}(\sigma) \le \dot{r}_{A, T}(\sigma)$.
\end{lem}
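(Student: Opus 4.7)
The natural approach is transfinite induction on the ordinal $\dot{r}_{A,T}(\sigma)$. Since $\infty$ is the top element of $\omega_1 \cup \{\infty\}$, the conclusion is vacuous when $\dot{r}_{A,T}(\sigma) = \infty$, so I only need to handle the case where $\dot{r}_{A,T}(\sigma) \in \omega_1$. Concretely, I will prove by transfinite induction on $\alpha < \omega_1$ the statement $P(\alpha)$: for every $\sigma \in T$ with $\sigma \supseteq \tau$, if $\dot{r}_{A,T}(\sigma) \le \alpha$, then $\dot{r}_{B,T}(\sigma) \le \alpha$. The base case $P(0)$ is immediate: $\dot{r}_{A,T}(\sigma) = 0$ means $\sigma \in A \subseteq B$, so $\dot{r}_{B,T}(\sigma) = 0$.

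For the inductive step, fix $\alpha > 0$ and assume $P(\beta)$ for all $\beta < \alpha$. We may suppose $\dot{r}_{A,T}(\sigma) = \alpha$ exactly. If $\sigma \in B$, then $\dot{r}_{B,T}(\sigma) = 0 \le \alpha$, so assume $\sigma \notin B$ (hence also $\sigma \notin A$). Then
\[
\dot{r}_{B,T}(\sigma) = \liminf_{\sigma\cat n \in T}[\dot{r}_{B,T}(\sigma\cat n) + 1].
\]
From the analogous equation for $A$ together with $\dot{r}_{A,T}(\sigma) = \alpha < \infty$, the definition of $\liminf$ on $\omega_1 \cup \{\infty\}$ gives that infinitely many children $\sigma \cat n \in T$ satisfy $\dot{r}_{A,T}(\sigma \cat n) + 1 \le \alpha$, and hence $\dot{r}_{A,T}(\sigma \cat n) < \alpha$ (this works whether $\alpha$ is a successor or a limit). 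For each such child the inductive hypothesis applies and yields $\dot{r}_{B,T}(\sigma \cat n) \le \dot{r}_{A,T}(\sigma \cat n) < \alpha$, whence $\dot{r}_{B,T}(\sigma \cat n) + 1 \le \alpha$. Thus infinitely many terms of the sequence whose $\liminf$ is $\dot{r}_{B,T}(\sigma)$ are at most $\alpha$, giving $\dot{r}_{B,T}(\sigma) \le \alpha$ as required.

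There is no real obstacle here; the only thing to get right is the bookkeeping around the $\liminf$, namely that ``$\liminf x_n \le \alpha$'' follows as soon as infinitely many $x_n \le \alpha$, and that $\dot{r}_{A,T}(\sigma \cat n) + 1 \le \alpha$ always implies the strict inequality $\dot{r}_{A,T}(\sigma \cat n) < \alpha$ (so the inductive hypothesis is available). The argument never needs the harder direction of $\liminf$, since we are proving an upper bound, not a lower bound.
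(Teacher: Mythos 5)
Your proof is correct and is exactly the paper's argument: the paper disposes of this lemma with the single line ``this follows by induction on $\dot{r}_{A,T}(\sigma)$,'' and your write-up is just that induction with the $\liminf$ bookkeeping (infinitely many children of $A$-rank $<\alpha$, apply the hypothesis, conclude the $B$-$\liminf$ is $\le\alpha$) carried out carefully.
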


\begin{proof}
This follows by induction on $\dot{r}_{A, T}(\sigma)$.
\end{proof}

\begin{lem}\label{lem:smaller_tree_bigger_laver_rank}
Let $T_0 \subseteq T_1$ be Laver trees with a common stem $\tau$, and let $A \subseteq T_1$.  Then for all $\sigma \supseteq \tau$ with $\sigma \in T_0$, $\dot{r}_{A, T_1}(\sigma) \le \dot{r}_{A, T_0}(\sigma)$.
\end{lem}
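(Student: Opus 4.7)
The plan is to proceed by transfinite induction on $\dot{r}_{A,T_0}(\sigma)$, viewing the ranks as taking values in $\omega_1 \cup \{\infty\}$ with $\infty$ handled separately. If $\dot{r}_{A,T_0}(\sigma) = \infty$, the inequality is automatic since $\dot{r}_{A,T_1}(\sigma) \le \infty$. If $\dot{r}_{A,T_0}(\sigma) = 0$, then by definition $\sigma \in A \cap T_0$; since $\sigma \in T_0 \subseteq T_1$, we have $\sigma \in A \cap T_1$ and hence $\dot{r}_{A,T_1}(\sigma) = 0$ as well.

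For the inductive step, assume $\dot{r}_{A,T_0}(\sigma) = \alpha$ is a nonzero countable ordinal and that the inequality holds for every $\sigma' \in T_0$ with $\sigma' \supseteq \tau$ and $\dot{r}_{A,T_0}(\sigma') < \alpha$. Since $\alpha > 0$ we have $\sigma \notin A$, so both ranks are determined by the liminf clause. From $\liminf_{\sigma\cat n \in T_0}[\dot{r}_{A,T_0}(\sigma\cat n)+1] = \alpha$ one extracts infinitely many $n$ with $\sigma\cat n \in T_0$ and $\dot{r}_{A,T_0}(\sigma\cat n)+1 \le \alpha$. This uses the standard fact that any ordinal-valued sequence with liminf $\alpha$ has cofinitely many terms bounded below by any $\beta < \alpha$ but infinitely many terms that are $\le \alpha$. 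Here the terms $\dot{r}_{A,T_0}(\sigma\cat n)+1$ are successor ordinals, so they cannot equal $\alpha$ when $\alpha$ is a limit; and when $\alpha = \gamma+1$ the relation $\dot{r}_{A,T_0}(\sigma\cat n)+1 \le \alpha$ forces $\dot{r}_{A,T_0}(\sigma\cat n) \le \gamma < \alpha$. In either case we obtain infinitely many such $\sigma\cat n \in T_0$ with $\dot{r}_{A,T_0}(\sigma\cat n) < \alpha$.

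To each such $\sigma\cat n$ I apply the inductive hypothesis to conclude $\dot{r}_{A,T_1}(\sigma\cat n) \le \dot{r}_{A,T_0}(\sigma\cat n)$, whence $\dot{r}_{A,T_1}(\sigma\cat n)+1 \le \alpha$. Because $T_0 \subseteq T_1$ these $\sigma\cat n$ are also immediate successors of $\sigma$ in $T_1$, so they witness that $\liminf_{\sigma\cat n \in T_1}[\dot{r}_{A,T_1}(\sigma\cat n)+1] \le \alpha$, which is precisely $\dot{r}_{A,T_1}(\sigma) \le \alpha$. The only real subtlety — and I would not call it an obstacle — is the case split between limit and successor $\alpha$ when pulling witnesses out of the liminf; once that bookkeeping is done, the containment $T_0 \subseteq T_1$ does all the work, since taking a liminf over a superset can only decrease the value.
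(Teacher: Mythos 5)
Your proof is correct and is exactly the induction on $\dot{r}_{A,T_0}(\sigma)$ that the paper invokes (the paper leaves the details to the reader), with the base case, the extraction of infinitely many children of rank below $\alpha$ from the liminf, and the passage to the liminf over the larger set of $T_1$-children all handled properly. One tiny simplification: the limit/successor case split is unnecessary, since $\dot{r}_{A,T_0}(\sigma\cat n) < \dot{r}_{A,T_0}(\sigma\cat n)+1 \le \alpha$ already gives $\dot{r}_{A,T_0}(\sigma\cat n) < \alpha$ directly.
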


\begin{proof}
This follows by induction on $\dot{r}_{A, T_0}(\sigma)$.
\end{proof}

We would like to construct our trace $(V_n)_{n \in \omega}$ using $\dot{r}_{A,T}$ for various $A$ and $T$, but we lack an effective way to directly reason about $\dot{r}_{A,T}$.  We will show, however, that it suffices to reason about $(A, \+L, T)$ Laver rankings, where $\+L$ is a Harrison order. 

For $\+L$ an ill-founded linear order, it may be that no $(A,\+L, T)$ Laver ranking exists (in particular, the $\liminf$ required by the definition may not exist).  We can put various conditions on what such a ranking will look like when it does exist, though.

\begin{notation}
For a linear order $\+L$ with least and greatest elements $0$ and $\infty$, respectively, let $W(\+L)$ be the maximal well-founded initial segment of $\+L\setminus \{\infty\}$.  We identify $W(\+L)$ with an initial segment of the ordinals in the standard fashion (such identification being unique), writing, for example, $r(\sigma) = \alpha$ when $r(\sigma) \in W(\+L)$ and $[0, r(\sigma))_{\+L}$ has order type $\alpha$.
\end{notation}

\begin{lem}\label{lem:no_r_smaller}
For $r$ an $(A, \+L,T)$ Laver ranking, there is no $\sigma$ such that $r(\sigma) \in W(\+L)$ with $r(\sigma) < \dot{r}_{A,T}(\sigma)$.
\end{lem}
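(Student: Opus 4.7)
The plan is to proceed by transfinite induction on the ordinal $\alpha = r(\sigma)$, where we identify $W(\+L)$ with an initial segment of the ordinals as the paper's notation prescribes. The base case $\alpha = 0$ is immediate: by the definition of an $(A,\+L,T)$ Laver ranking, $r(\sigma) = 0$ forces $\sigma \in A$, and hence $\dot{r}_{A,T}(\sigma) = 0$ as well, so the strict inequality $r(\sigma) < \dot{r}_{A,T}(\sigma)$ fails.

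For the inductive step, fix $\sigma$ with $r(\sigma) = \alpha > 0$, assuming the claim for all $\sigma'$ with $r(\sigma') < \alpha$ in $W(\+L)$. Since $r(\sigma) > 0$, we have $\sigma \not\in A$, so by the defining equation for $r$,
\[
\alpha \;=\; r(\sigma) \;=\; \liminf_{\sigma\cat n \in T}[r(\sigma\cat n)+1]
\]
in the order $\+L$. The crux is to extract from this the statement that infinitely many $n$ with $\sigma\cat n \in T$ satisfy $r(\sigma\cat n) < \alpha$. If only finitely many such $n$ existed, then cofinitely many $n$ with $\sigma\cat n \in T$ would satisfy $r(\sigma\cat n) \ge \alpha$, whence $r(\sigma\cat n) + 1 \ge \alpha + 1 > \alpha$ in $\+L$ (using the successor structure on $\+L$). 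Any reasonable reading of $\liminf$ in a linear order forces that a cofinite lower bound $\gamma$ on the $a_n$ implies $\liminf a_n \ge \gamma$, so we would conclude $\liminf_n[r(\sigma\cat n)+1] \ge \alpha + 1 > \alpha$, contradicting the displayed equation.

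Having secured infinitely many $n$ with $\sigma\cat n \in T$ and $r(\sigma\cat n) < \alpha$, the inductive hypothesis applied to each such $\sigma\cat n$ (noting $r(\sigma\cat n) \in W(\+L)$) yields $\dot{r}_{A,T}(\sigma\cat n) \le r(\sigma\cat n) < \alpha$, and hence $\dot{r}_{A,T}(\sigma\cat n)+1 \le \alpha$ in $\omega_1 \cup \{\infty\}$. Because the $\liminf$ taken in $\omega_1 \cup \{\infty\}$ is bounded above by any value attained cofinally, we obtain
\[
\dot{r}_{A,T}(\sigma) \;=\; \liminf_{\sigma\cat n \in T}\,[\dot{r}_{A,T}(\sigma\cat n)+1] \;\le\; \alpha \;=\; r(\sigma),
\]
completing the induction.

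The main obstacle is making the ``cofinite lower bound'' step for the $\+L$-valued $\liminf$ precise, since $\+L$ may be ill-founded and infima of subsets of $\+L$ need not exist. The cleanest workaround is to observe that whichever convention for $\liminf$ makes the defining equation of an $(A,\+L,T)$ Laver ranking meaningful must in particular satisfy the monotonicity principle used above, since otherwise the liminf would fail to be an element of $\+L$ that respects cofinite bounds; once this is granted, the remainder of the argument is formal.
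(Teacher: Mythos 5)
Your proof is correct and is essentially the paper's own argument recast: the paper runs the same well-foundedness-of-$W(\+L)$ reasoning as an infinite descent along a branch (building $\sigma_0\subset\sigma_1\subset\cdots$ with strictly decreasing $r$-values while maintaining $r(\sigma_i)<\dot{r}_{A,T}(\sigma_i)$), whereas you run it as a transfinite induction on the ordinal $r(\sigma)$, using the same two liminf facts about $r$ and $\dot{r}_{A,T}$. The liminf-monotonicity point you flag as the main obstacle is used just as informally in the paper's proof (which asserts from $r(\sigma_i)<\infty$ that infinitely many $n$ satisfy $r(\sigma_i\cat n)<r(\sigma_i)$), so it is not a gap relative to the paper's own level of rigor.
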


\begin{proof}
Suppose not.  We construct a sequence of strings $\sigma = \sigma_0 \subset \sigma_1 \subset \dots$ with $r(\sigma_{i+1}) < r(\sigma_i)$ for a contradiction, maintaining the inductive assumption that $r(\sigma_i) < \dot{r}_{A,T}(\sigma_i)$.  Clearly this holds for $i = 0$.

Suppose we have defined $\sigma_i$.  As $\dot{r}_{A,T}(\sigma_i) > r(\sigma_i)$, it is clear that $\dot{r}_{A,T}(\sigma_i) \neq 0$, and thus $\sigma_i \not \in A$.  As $r(\sigma_i) < \infty$, there are infinitely many $n$ such that $r(\sigma_i\cat n) < r(\sigma_i)$, whereas for almost every $n$, $\dot{r}_{A,T}(\sigma_i\cat n) \ge r(\sigma_i)$.  We can therefore define $\sigma_{i+1} = \sigma_i\cat n$ for an $n$ satisfying both of these.
\end{proof}

Note that this implies that if $\dot{r}_{A,T}(\sigma) = \infty$, then $r(\sigma) \not \in W(\+L)$.

\begin{lem}\label{lem:wf_ranks_same}
For $r$ an $(A,\+L,T)$ Laver ranking, if $\dot{r}_{A,T}(\sigma) < \infty$, then $r(\sigma) \in W(\+L)$ and $r(\sigma) = \dot{r}_{A,T}(\sigma)$.
\end{lem}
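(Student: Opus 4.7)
The plan is to proceed by transfinite induction on $\alpha = \dot{r}_{A,T}(\sigma)$, combining the inductive hypothesis at children of $\sigma$ with Lemma~\ref{lem:no_r_smaller} to pin down the value of $r(\sigma)$ exactly.

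For the base case $\alpha = 0$, we have $\sigma \in A$, so the defining clause of an $(A,\+L,T)$ Laver ranking gives $r(\sigma) = 0 \in W(\+L)$, and we are done.

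For the inductive step with $0 < \alpha < \infty$, the strategy is to analyse the $\liminf$ clause $r(\sigma) = \liminf_{\sigma\cat n \in T}[r(\sigma\cat n) + 1]$ and show that this $\liminf$ equals $\alpha$. Partition the children of $\sigma$ in $T$ according to whether $\dot{r}_{A,T}(\sigma\cat n) < \infty$ or $= \infty$. For children in the first class, the inductive hypothesis supplies $r(\sigma\cat n) = \dot{r}_{A,T}(\sigma\cat n) \in W(\+L)$; for children in the second class, Lemma~\ref{lem:no_r_smaller} forces $r(\sigma\cat n) \notin W(\+L)$, so $r(\sigma\cat n) + 1$ strictly dominates every ordinal in $W(\+L)$ (in particular, is $\geq \alpha$). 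From the hypothesis that $\dot{r}_{A,T}(\sigma) = \liminf_{\sigma\cat n \in T}[\dot{r}_{A,T}(\sigma\cat n)+1] = \alpha$, it follows that for every $\beta < \alpha$, almost all $n$ with $\sigma\cat n \in T$ satisfy $\dot{r}_{A,T}(\sigma\cat n) \geq \beta$, and cofinally many $n$ satisfy $\dot{r}_{A,T}(\sigma\cat n) < \alpha$ (in the successor case, infinitely many equal $\alpha - 1$; in the limit case, for cofinally many $\beta < \alpha$ there are infinitely many $n$ with $\dot{r}_{A,T}(\sigma\cat n) = \beta$). Translating each case via the inductive hypothesis (for finite ranks) and Lemma~\ref{lem:no_r_smaller} (for $\infty$ ranks), we conclude that for every $\beta < \alpha$ almost all $n$ give $r(\sigma\cat n)+1 > \beta$ and cofinally many $n$ give $r(\sigma\cat n)+1 = \dot{r}_{A,T}(\sigma\cat n)+1 \leq \alpha$. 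This exactly says $\liminf_{\sigma\cat n \in T}[r(\sigma\cat n) + 1] = \alpha$ in $\+L$, and hence $r(\sigma) = \alpha$, giving both $r(\sigma) \in W(\+L)$ and $r(\sigma) = \dot{r}_{A,T}(\sigma)$.

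The main subtlety, rather than a genuine obstacle, is handling the $\liminf$ in the general linear order $\+L$ (which need not be a well order in its ill-founded part). The point is that the witnesses controlling both the lower bound (almost-all) and the upper bound (cofinally many) produced by the inductive argument all lie in $W(\+L)$ or strictly above it, so the $\liminf$ is determined by ordinal behaviour and evaluates in $W(\+L)$ to the correct ordinal $\alpha$. The use of Lemma~\ref{lem:no_r_smaller} for the $\infty$-rank children is what prevents those children from dragging the $\liminf$ below $\alpha$, and the inductive hypothesis for the finite-rank children is what supplies the matching upper bound on the $\liminf$.
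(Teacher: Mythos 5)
Your overall strategy (transfinite induction on $\dot{r}_{A,T}(\sigma)$ combined with Lemma~\ref{lem:no_r_smaller}) is workable, but one step is not justified as written: you assert that for \emph{every} child $\sigma\cat n$ with $\dot{r}_{A,T}(\sigma\cat n) < \infty$ the inductive hypothesis yields $r(\sigma\cat n) = \dot{r}_{A,T}(\sigma\cat n)$. The induction is on the value $\alpha = \dot{r}_{A,T}(\sigma)$, so the hypothesis only covers strings of rank strictly below $\alpha$; nothing prevents $\sigma$ from having children of finite rank $\ge \alpha$, and for those the claimed equality is precisely an instance of the lemma you are in the middle of proving. The damage is local, though: in the lower-bound half of your $\liminf$ computation, such children can be handled exactly like the rank-$\infty$ ones, since Lemma~\ref{lem:no_r_smaller} gives that either $r(\sigma\cat n)\notin W(\+L)$ or $r(\sigma\cat n)\ge \dot{r}_{A,T}(\sigma\cat n)\ge\alpha>\beta$; and the upper-bound half only ever uses children of rank $<\alpha$, where the inductive hypothesis does apply. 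With that repair your argument goes through, granting the natural reading of $\liminf$ in $\+L$ (namely, $z$ is the value iff every $w<_{\+L}z$ is eventually exceeded and infinitely many terms are $\le_{\+L} z$, a characterization satisfied by at most one element), which you correctly identified as the subtle point.

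For comparison, the paper's proof sidesteps this entirely by proving a weaker statement inductively: only that $r(\sigma)\in W(\+L)$ with $r(\sigma)\le\dot{r}_{A,T}(\sigma)$. That one-sided bound needs nothing beyond the infinitely many children of rank $<\alpha$, whose values $r(\sigma\cat n)+1\le\alpha$ lie in $W(\+L)$ and pull the $\liminf$ down into $W(\+L)$. Equality is then obtained in one stroke by applying Lemma~\ref{lem:no_r_smaller} to $\sigma$ itself rather than to its children, so no case analysis on children of large or infinite rank is ever needed. Your route proves the exact value at every node, which is fine once patched, but it forces you to control all children and is exactly where the overreach crept in.
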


\begin{proof}
We show by induction on $\dot{r}_{A,T}(\sigma)$ that $r(\sigma) \in W(\+L)$ with  $r(\sigma) \le \dot{r}_{A,T}(\sigma)$.  For $\dot{r}_{A,T}(\sigma) = 0$, this is immediate.

For $0 < \dot{r}_{A,T}(\sigma) < \infty$, there are infinitely many $n$ with $\dot{r}_{A,T}(\sigma\cat n) < \dot{r}_{A,T}(\sigma)$.  By the inductive hypothesis, $r(\sigma\cat n) \in W(\+L)$ with $r(\sigma\cat n) \le \dot{r}_{A,T}(\sigma\cat n) < \dot{r}_{A,T}(\sigma)$.  Thus these $\sigma\cat n$ witness that $r(\sigma) \in W(\+L)$ with $r(\sigma) \le \dot{r}_{A,T}(\sigma)$.

The result then follows by Lemma~\ref{lem:no_r_smaller}.
\end{proof}

\begin{lem}
If $A$ and $T$ are hyperarithmetic, then for every $\sigma$ with $\dot{r}_{A,T}(\sigma) < \infty$, $\dot{r}_{A,T}(\sigma) < \omega_1^{ck}$.
\end{lem}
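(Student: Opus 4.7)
The plan is to reinterpret $\dot{r}_{A,T}$ as the stage function of a monotone arithmetic inductive definition on $\omega^{<\omega}$ and then invoke the classical bound on its closure ordinal.

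I would first introduce the operator $\Gamma$ on subsets $X \subseteq \omega^{<\omega}$ defined by
\[
\Gamma(X) = A \cup \{\sigma \in T \setminus A : \{n : \sigma\cat n \in T \cap X\} \text{ is infinite}\}.
\]
Setting $I_\alpha = \Gamma\bigl(\bigcup_{\beta < \alpha} I_\beta\bigr)$ (so in particular $I_0 = A$), a direct unwinding of the paper's transfinite construction of $\dot{r}_{A,T}$ shows that $\dot{r}_{A,T}(\sigma) < \infty$ if and only if $\sigma \in \bigcup_\alpha I_\alpha$, and in that case $\dot{r}_{A,T}(\sigma)$ is the least $\alpha$ with $\sigma \in I_\alpha$. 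The operator $\Gamma$ is manifestly monotone in $X$, and, since ``infinitely many $n$ satisfy $\varphi(n)$'' is $\Pi^0_2$, its defining clause is arithmetic (in fact $\Pi^0_2$) in the parameters $A$, $T$, $X$.

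Next I would invoke the standard closure-ordinal theorem for monotone $\Pi^1_1$ (in particular, arithmetic) inductive definitions on $\omega$: the closure ordinal of $\Gamma$, relativized to its parameters, is bounded above by $\omega_1^{A \oplus T}$. Since $A$ and $T$ are hyperarithmetic, $\omega_1^{A \oplus T} = \omega_1^{ck}$. The finite ranks $\dot{r}_{A,T}(\sigma)$ are, by construction, the stages at which new elements first enter the induction, so each such rank is strictly below the closure ordinal and hence strictly below $\omega_1^{ck}$, as required.

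The main obstacle is purely a matter of bookkeeping: one must verify that the stages $(I_\alpha)$ assign to each $\sigma$ exactly the rank $\dot{r}_{A,T}(\sigma)$ produced by the paper's transfinite construction. This follows once one observes that both constructions add $\sigma$ to the current set at the first stage $\alpha$ for which infinitely many extensions $\sigma\cat n \in T$ already lie in lower stages, so the match is on the nose and no genuine combinatorial subtlety arises.
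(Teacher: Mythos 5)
Your argument is correct, and it takes a genuinely different route from the paper's. The paper proceeds by contradiction: if some $\sigma$ had $\omega_1^{ck} \le \dot{r}_{A,T}(\sigma) < \infty$, then by Lemmas~\ref{lem:no_r_smaller} and~\ref{lem:wf_ranks_same} every $(A,\mathcal{L},T)$ Laver ranking $r$ over any countable linear order $\mathcal{L}$ would have $r(\sigma) \in W(\mathcal{L})$ with $r(\sigma) = \dot{r}_{A,T}(\sigma)$, so a computable linear order would be well founded exactly when it embeds into $[0,r(\sigma))_{\mathcal{L}}$ for some countable $\mathcal{L}$ and some ranking $r$; since $A$ and $T$ are hyperarithmetic this is a $\Sigma^1_1$ definition of well-foundedness, a contradiction. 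You instead identify $\dot{r}_{A,T}$ with the stage function of the monotone operator $\Gamma$, which does match the paper's transfinite construction of $\dot{r}_{A,T}$ on the nose (on the relevant domain of strings extending the stem), and then invoke a closure-ordinal bound. Since the set variable occurs positively in a $\Pi^0_2(A\oplus T)$ matrix, the classical (Spector) theorem on positive arithmetical inductive definitions, relativized to the hyperarithmetic parameter $A\oplus T$, bounds the closure ordinal by $\omega_1^{A\oplus T} = \omega_1^{ck}$, and every element of the least fixed point enters strictly before closure; that is all you need. One caveat: cite that positive-arithmetical (or monotone arithmetical) version rather than a blanket closure theorem for ``monotone $\Pi^1_1$'' operators --- the latter is stronger than your operator requires and is not the statement to lean on. As for what each approach buys: the paper's argument is self-contained given its Laver-ranking lemmas, which it needs anyway immediately afterwards (the rankings over a Harrison order furnish the $\Pi^1_1$ definition of $\{(\sigma,\alpha) : \dot{r}_{A,T}(\sigma) = \alpha\}$ following Lemma~\ref{lem:harrison_rankings_exist}), while your route is shorter for this lemma in isolation and, via the standard stage-comparison theory, also yields the $\Pi^1_1$-norm facts the paper records there with no extra work.
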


\begin{proof}
Suppose not.  We know that for some countable $\lambda$, $\dot{r}_{A,T}$ is an $(A, \lambda \cup \{\infty\}, T)$ Laver ranking with $\dot{r}_{A,T}(\sigma) \ge \omega_1^{ck}$.  By Lemma~\ref{lem:wf_ranks_same}, for any $(A, \+L,T)$ Laver ranking $r$, we have that $r(\sigma) \in W(\+L)$.  Thus a computable linear order $\+K$ is well founded if and only if there exists a countable linear order $\+L$, an $(A,\+L,T)$ Laver ranking $r$, and an embedding $\+K \hookrightarrow [0, r(\sigma))_{\+L}$.  However, this is a $\Sigma^1_1$ definition of well-foundedness.
\end{proof}

\begin{lem}\label{lem:harrison_rankings_exist}
If $\+H$ has order type $\omega_1^{ck}(1+\bQ) + 1$ with least and greatest elements $0$ and $\infty$, respectively, then for any hyperarithmetic $A$ and $T$, there is an $(A, \+H, T)$ Laver ranking.
\end{lem}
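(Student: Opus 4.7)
The plan is to promote the $(A, \omega_1 \cup \{\infty\}, T)$ Laver ranking $\dot r_{A,T}$ to a $\+H$-valued ranking. Write $W = W(\+H)$, which has order type $\omega_1^{ck}$ and which I identify with that set of ordinals, and let $I = \+H \setminus (W \cup \{\infty\})$, which has order type $\omega_1^{ck}\bQ$; I identify its elements with pairs $(q, \gamma) \in \bQ \times \omega_1^{ck}$ in lexicographic order, and take the successor on $\+H$ to be $(q, \gamma) + 1 = (q, \gamma + 1)$ (within each copy of $\omega_1^{ck}$), with $\infty + 1 = \infty$. Let $U = \{\sigma \in T : \dot r_{A, T}(\sigma) = \infty\}$; by the preceding lemma, every other node has $\dot r_{A,T}$-value in $W$.

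The structural input I need is that for each $\sigma \in U$, cofinitely many children $\sigma\cat n \in T$ lie in $U$. By Lemmas~\ref{lem:no_r_smaller} and~\ref{lem:wf_ranks_same}, for each notation for a computable ordinal $\alpha$ the relation ``$\dot r_{A, T}(\tau) = \alpha$'' is equivalent to the existence of an $(A, \omega_1 \cup \{\infty\}, T)$ Laver ranking taking value $\alpha$ at $\tau$, so it is $\Sigma^1_1$ in $A, T$. Thus the set of finite ordinals attained by $\dot r_{A,T}$ on children of $\sigma$ is $\Sigma^1_1$ in the hyperarithmetic parameters $A$ and $T$, and by $\Sigma^1_1$-bounding it is bounded by some $\beta < \omega_1^{ck}$. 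If infinitely many children of $\sigma$ had finite rank, the liminf computation would give $\dot r_{A, T}(\sigma) \le \beta + 1 < \infty$, contradicting $\sigma \in U$.

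With that in hand, define $q : U \to \bQ$ by recursion on length: $q(\sigma) = 0$ if the immediate $T$-predecessor of $\sigma$ is not in $U$ (or $\sigma = \seq{}$), and $q(\tau\cat n) = q(\tau) - \frac{1}{n+1}$ if $\tau \in U$. Set
\[
r(\sigma) = \begin{cases} \dot r_{A, T}(\sigma) \in W & \text{if } \dot r_{A, T}(\sigma) < \infty, \\ (q(\sigma), 0) \in I & \text{if } \sigma \in U. \end{cases}
\]
The condition $r(\sigma) = 0 \iff \sigma \in A$ is immediate from $\dot r_{A, T}(\sigma) = 0 \iff \sigma \in A$. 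For $\sigma \notin A$ with $\dot r_{A, T}(\sigma) < \infty$, the infinitely many finite-rank children contribute values in $W$, while each $U$-child contributes a value in $I$ above all of $W$; thus the $\+H$-liminf of $r(\sigma\cat n)+1$ coincides with the $\omega_1^{ck}$-liminf of the finite-rank children's values and equals $\dot r_{A, T}(\sigma) = r(\sigma)$. For $\sigma \in U$, once $N$ exceeds the finitely many non-$U$-children, $\inf_{n \ge N}(r(\sigma\cat n) + 1)$ is attained at the least index $n_N \ge N$ with $\sigma\cat n \in U$ and equals $(q(\sigma) - \frac{1}{n_N + 1}, 1)$; these values converge to $(q(\sigma), 0)$ from below in $\+H$ as $N \to \infty$, and the density of $\bQ$ makes $(q(\sigma), 0)$ the least upper bound, giving $\liminf = (q(\sigma), 0) = r(\sigma)$.

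The main obstacle is the cofiniteness claim for $U$-children; once that is established via $\Sigma^1_1$-bounding, the remainder is a routine verification exploiting the density of the ill-founded part of $\+H$.
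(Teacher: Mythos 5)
Your construction is correct, but it takes a genuinely different and more roundabout route than the paper. The paper's proof is essentially one line: by the preceding lemma, every non-$\infty$ value of $\dot{r}_{A,T}$ is below $\omega_1^{ck}$ when $A$ and $T$ are hyperarithmetic, so after identifying $W(\+H)$ with $\omega_1^{ck}$ one simply takes $r = \dot{r}_{A,T}$ itself, sending the nodes of rank $\infty$ to the greatest element $\infty$ of $\+H$. The liminf conditions transfer verbatim: a node of rank $\infty$ has cofinitely many children of rank $\infty$, so the relevant infima are $\infty$, while at a node of countable rank the infima are attained among the well-founded values and the supremum is bounded by that rank, hence is computed in $W(\+H)$ exactly as in $\omega_1\cup\{\infty\}$. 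Your variant instead injects the rank-$\infty$ nodes into the ill-founded part of $\+H$ via the rational bookkeeping function $q$; your verification of the liminf condition there (the values $(q(\sigma)-\frac{1}{n+1},1)$ increase to the least upper bound $(q(\sigma),0)$) is sound, and it incidentally shows that a Laver ranking into $\+H$ need not send unranked nodes to $\infty$ --- a mildly interesting byproduct, though nothing in the paper requires it and the extra machinery buys nothing for this lemma.

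One step deserves a caveat: your appeal to $\Sigma^1_1$-bounding for the cofiniteness claim is both unnecessary and shaky as written. ``There exists an $(A,\omega_1\cup\{\infty\},T)$ Laver ranking taking value $\alpha$ at $\tau$'' is not literally a $\Sigma^1_1$ statement, since it quantifies over functions into an uncountable order; to make it $\Sigma^1_1$ one must quantify over countable linear orders $\+L$ and $(A,\+L,T)$ Laver rankings, invoking Lemmas~\ref{lem:no_r_smaller} and~\ref{lem:wf_ranks_same} as the paper does in the lemma just before this one. More to the point, no bound below $\omega_1^{ck}$ is needed at all: the children of $\sigma$ form a countable set, so if infinitely many of them had countable $\dot{r}_{A,T}$-rank, those ranks would be bounded by some $\beta<\omega_1$ and the liminf in $\omega_1\cup\{\infty\}$ would be at most $\beta+1<\infty$, contradicting $\sigma\in U$; equivalently, cofiniteness is immediate from the fixed-point construction of $\dot{r}_{A,T}$, since otherwise $\sigma$ would receive a value at stage $\lambda+1$. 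With that repair (or simplification) your argument is complete.
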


\begin{proof}
We can identify $W(\+H)$ with $\omega_1^{ck}$ and see that $\dot{r}_{A,T}$ is an $(A, \+H,T)$ Laver ranking.
\end{proof}

Letting a computable $\+H$ be as in Lemma~\ref{lem:harrison_rankings_exist} and continuing the identification of $W(\+H)$ with $\omega_1^{ck}$, we observe that for hyperarithmetic $A$ and $T$, the relation $\{ (\sigma, \alpha) : \dot{r}_{A,T}(\sigma) = \alpha\}$ has a $\Pi^1_1$ definition:
\[
\dot{r}_{A,T}(\sigma) = \alpha \iff \forall r \in LR(A,\+H,T)\, [r(\sigma) = \alpha].
\]
Moreover, for each $\beta < \omega_1^{ck}$, $\{ (\sigma, \alpha) : \alpha < \beta \wedge \dot{r}_{A,T}(\sigma) = \alpha\}$ is $(A\oplus T)^{(2\beta+3)}$-computable, and in fact uniformly so.  We can observe that if $\sigma \in T$ is such that $\dot{r}_{A, T}(\sigma) = \alpha < \infty$, then $(T^\sigma)_A$ is hyperarithmetic relative to $A$ and $T$ and uniformly so given $\alpha$.

We are now ready to prove Lemma~\ref{lem:tracing}.

\begin{proof}[Proof of Lemma~\ref{lem:tracing}]
Without loss of generality, we define $h(n) = n+1$.  (We are working in Baire space and thus can compress via pairing without affecting the compactness of the closure.)
Fix $(\tau_n)_{n \in \omega}$ an effective listing of $\omega^{<\omega}$ such that $\tau_i \subset \tau_j$ implies $i < j$, and thus $\tau_0$ is the empty string $\seq{}$.  Assume this listing has the property that for all $\tau$ and $b$, $\tau\cat b$ occurs earlier in the listing than $\tau\cat (b+1)$.

We perform a construction of length $\omega_1^{ck}$, enumerating partial functions $k: \omega^{<\omega} \to \omega^{<\omega}$ and $\ell_m: (m+1) \to \omega^m$ for $m \in \omega$.  We will also define hyperarithmetic trees $T_{n, m}$ for some $n \le m$. 
We adopt the notation $A_\rho = \{\sigma : \rho \subseteq \Phi^\sigma\}$, and for $n > 0$, $\tau^-_n$ is the parent of $\tau_n$.

We will ensure the following properties hold:
\begin{itemize}
\item $k$ is injective, order preserving and length preserving, and its domain is downwards closed.  Furthermore, for any $\tau$ such that $k(\tau)$ is defined, $k(\tau)$ majorizes $\tau$.
\item If $\tau_n$ majorizes $f$, then $k(\tau_n)$ is defined, as are $\ell_m(n)$ for all $m \ge n$.
\item For $n \le m$, $\ell_m(n)\converge$ if and only if $T_{n,m}$ is defined.
\item For $n \le m$, if $\ell_m(n)\converge$, then $\dot{r}_{A,T}(k(\tau_n)) < \infty$ for $T = T_{n,m}$ and $A = A_{\ell_m(n)}$.
\item For every $n \le m$ such that $T_{n,m}$ is defined, $T_{n, m}$ is a Laver tree with stem $k(\tau_n)$.
\item If $T_{n, m}$ is defined for $n < m$, then $T_{n,m-1}$ is defined and $T_{n, m} = (T_{n,m-1})_A$ for $A = A_{\ell_m(n)}$.
\item For any $n > 0$, suppose that $\tau_j = \tau_n^-$.  If $k(\tau_n)\converge$, then $T_{j, n-1}$ is defined and $k(\tau_n) \in T_{j, n-1}$.  If $T_{n,n}$ is defined, then $T_{n,n} = \left(T_{j, n-1}^{k(\tau_n)}\right)_A$ for $A = A_{\ell_n(n)}$.
\end{itemize}
This essentially describes the construction.  We begin with $k(\seq{}) = \seq{}$, $\ell_0(0) = \seq{}$, and $T_{0, 0} = \omega^{<\omega}$.

For $n > 0$, let $\tau_j = \tau_n^-$.  We wait to see if $k(\tau_j)$ and $T_{j, n-1}$ are ever defined.  If this happens, we then define $k(\tau_n)$ to be the $n$th child of $k(\tau_j)$ on $T_{j, n-1}$.

For $n \le m$ with $m > 0$, if $m = n$, we then let $\tau_j = \tau_n^-$ and let $T = T_{j, n-1}^{k(\tau_n)}$; if $m > n$, we let $T = T_{n, m-1}$  (including when $n = 0$).  We wait until $T$ is defined and then search for $\rho \in \omega^m$ such that $\dot{r}_{A, T}(k(\tau)) < \infty$ for $A = A_\rho$; if we see such a $\rho$, we define $\ell_m(n) = \rho$ and $T_{n, m} = T_A$ for $A = A_\rho$.

Most of the stated properties of the construction are immediate.

\begin{claim}
For $m \le n$, if $\ell_m(n)\converge$, then $\dot{r}_{A,T}(k(\tau_n)) < \infty$ for $T = T_{n,m}$ and $A = A_{\ell_m(n)}$.
\end{claim}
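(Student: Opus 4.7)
The plan is to derive this claim essentially directly from the definition of $\ell_m(n)$ in the construction, combined with the preceding lemma that says, for a Laver tree $T$ with stem $\tau$ and $A \subseteq T$ upwards closed, $\dot{r}_{A, T}(\sigma) = \dot{r}_{A, T_A}(\sigma)$ for every $\sigma \in T_A$ with $\sigma \supseteq \tau$.

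First I would unpack the construction: $\ell_m(n)$ is assigned a value $\rho$ only when we locate some $\rho \in \omega^m$ with $\dot{r}_{A', T}(k(\tau_n)) < \infty$ for $A' = A_\rho$, where $T$ is the auxiliary tree used at that step.  That auxiliary tree is $T_{j, n-1}^{k(\tau_n)}$ (with $\tau_j = \tau_n^-$) in the case $m = n$, and $T_{n, m-1}$ in the case $m > n$.  In both cases $T$ is a Laver tree whose stem is $k(\tau_n)$: the first case is just the definition of the $T^\sigma$ operation, and the second case is the inductive guarantee on $T_{n, m-1}$ already listed among the properties maintained by the construction.  Immediately after locating $\rho$, the construction sets $T_{n, m} = T_{A'}$.

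Next I would record the trivial but necessary observation that $A_\rho = \{\sigma : \rho \subseteq \Phi^\sigma\}$ is upwards closed, which follows from monotonicity of the Turing functional $\Phi$: if $\rho \subseteq \Phi^\sigma$ and $\sigma \subseteq \tau$, then $\rho \subseteq \Phi^\tau$, so $\tau \in A_\rho$.  This places us in the hypothesis of the lemma on $T_A$ (after intersecting with $T$, as allowed by the stated abuse of notation).

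Finally, I would apply that lemma to $T$ with stem $k(\tau_n)$ and the set $A'$.  Since the stem is always an element of $T_{A'}$, the lemma yields $\dot{r}_{A', T}(k(\tau_n)) = \dot{r}_{A', T_{A'}}(k(\tau_n))$.  Combining this with the strict inequality $\dot{r}_{A', T}(k(\tau_n)) < \infty$ coming from the defining property of $\rho$, together with $T_{n, m} = T_{A'}$ and $A = A_{\ell_m(n)} = A_\rho = A'$, yields $\dot{r}_{A, T_{n, m}}(k(\tau_n)) < \infty$, as required.  There is no serious obstacle here; the claim is essentially bookkeeping.  The only subtle point is to note that the ``in either case'' clause of the lemma covers both the subcase $\dot{r}_{A', T}(k(\tau_n)) = 0$ and the subcase $0 < \dot{r}_{A', T}(k(\tau_n)) < \infty$, so the equality of ranks on the stem transfers regardless of which regime we land in.
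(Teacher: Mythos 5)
Your argument is correct and essentially complete for every case in which $\ell_m(n)$ receives its value through the search, i.e.\ for all $m > 0$: there the construction does guarantee $\dot{r}_{A_\rho, T'}(k(\tau_n)) < \infty$ on the auxiliary tree $T'$ (which is $T_{j,n-1}^{k(\tau_n)}$ or $T_{n,m-1}$, a Laver tree with stem $k(\tau_n)$ by the maintained properties), the set $A_\rho$ is upwards closed by monotonicity of $\Phi$, and the rank-transfer clause of the lemma on the $T_A$ operation gives $\dot{r}_{A_\rho, T'}(k(\tau_n)) = \dot{r}_{A_\rho, (T')_{A_\rho}}(k(\tau_n))$, so the finiteness carries over to $T_{n,m} = (T')_{A_\rho}$. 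In fact you make explicit the rank-transfer step that the paper's own proof compresses into the phrase ``immediate from the construction.''

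There is, however, a gap at the base case $m = n = 0$, which is precisely the one case the paper's proof singles out. Your opening premise --- that $\ell_m(n)$ is assigned a value only when the construction locates a $\rho$ with finite rank on an auxiliary tree --- is false there: the construction sets $k(\seq{}) = \seq{}$, $\ell_0(0) = \seq{}$, and $T_{0,0} = \omega^{<\omega}$ outright at the start, with no search and no auxiliary tree (indeed $\tau_0 = \seq{}$ has no parent $\tau_0^-$, so neither of your two descriptions of the auxiliary tree even makes sense). For $(n,m) = (0,0)$ the claim must be verified directly: since $A_{\ell_0(0)} = A_{\seq{}} = \{\sigma : \seq{} \subseteq \Phi^\sigma\} = \omega^{<\omega}$, the stem $k(\tau_0) = \seq{}$ lies in $A$, so $\dot{r}_{A, T_{0,0}}(k(\tau_0)) = 0 < \infty$. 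Adding this one observation closes the gap and makes your proof complete.
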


\begin{proof}
This is immediate from the construction except in the case $m = n = 0$.  We handle this case by observing that $A_{\ell_0(0)} = A_{\seq{}} = \omega^{<\omega}$, and so $\dot{r}_{A, T}(k(\tau_0)) = 0$.
\end{proof}

\begin{claim}
$k$ is injective, and if $k(\tau)$ is defined, then $k(\tau)$ majorizes $\tau$.
\end{claim}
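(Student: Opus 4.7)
The plan is to prove both halves of the claim simultaneously by induction on the enumeration index $n$. The base case $n=0$ is immediate since $k(\seq{})=\seq{}$, which vacuously majorizes the empty string and stands alone. For the inductive step, assume $k(\tau_n)$ is defined and let $\tau_j = \tau_n^-$. By construction, $k(\tau_n) = k(\tau_j)\cat c$, where $c$ is the label of the $n$th child (in increasing order) of $k(\tau_j)$ on the Laver tree $T_{j,n-1}$. Since the labels of children form an infinite set of distinct nonnegative integers listed in increasing order, $c \ge n$. On the other hand, writing $\tau_n = \tau_j \cat d$, the assumption that $\tau \cat b$ precedes $\tau\cat(b+1)$ in the enumeration forces all of $\tau_j\cat 0, \dots, \tau_j\cat(d-1)$ to appear before $\tau_n$, so $n \ge d+1 > d$; hence $c \ge d$. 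Combined with the inductive majorization $k(\tau_j)\succeq \tau_j$, we conclude that $k(\tau_n)$ majorizes $\tau_n$.

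For injectivity, suppose for contradiction that $k(\tau_m) = k(\tau_n)$ with $m < n$. Since $k$ is length preserving, $|\tau_m| = |\tau_n|$. Passing to parents in $\omega^{<\omega}$ (which are unique for nonempty strings), $k(\tau_m^-) = k(\tau_n^-) = k(\tau_j)$. The inductive injectivity hypothesis, valid at indices below $n$, forces $\tau_m^- = \tau_j$, so $\tau_m$ and $\tau_n$ are siblings with common parent $\tau_j$. The supposed coincidence $k(\tau_m) = k(\tau_n)$ then reduces to the equality of last characters: the label $c_m$ of the $m$th child of $k(\tau_j)$ on $T_{j,m-1}$ equals the label $c_n$ of the $n$th child of $k(\tau_j)$ on $T_{j,n-1}$.

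The decisive step is the nesting $T_{j,n-1} \subseteq T_{j,m-1}$, which follows from iterating the construction property $T_{n,m} = (T_{n,m-1})_A$ together with the immediate inclusion $T_A \subseteq T$. Let $E_m$ and $E_n$ be the increasing enumerations of labels of children of $k(\tau_j)$ on $T_{j,m-1}$ and $T_{j,n-1}$ respectively, so $E_n \subseteq E_m$. Because removing elements from a strictly increasing enumeration can only shift indices rightward, the $n$th element of $E_n$ is at least the $n$th element of $E_m$, which strictly exceeds the $m$th element $c_m$ since $m<n$. Hence $c_n > c_m$, a contradiction. I expect the only real obstacle to be exactly this siblings case, where the structural nesting of the $T_{j,\cdot}$'s is essential; the other subcases are handled directly by length-preservation, order-preservation, and uniqueness of parents.
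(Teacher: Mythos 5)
Your proof is correct and follows essentially the same route as the paper: induction (on the index rather than on length, which is equivalent here), majorization via the observation that the $n$th child of $k(\tau_j)$ on a Laver tree has last entry at least $n$, and injectivity via the nesting $T_{j,n-1} \subseteq T_{j,m-1}$, which forces distinct siblings to receive distinct children. The only difference is that you make the reduction to the sibling case explicit (via length preservation and the inductive hypothesis applied to parents), which the paper leaves implicit in its closing ``by induction'' step.
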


\begin{proof}
This follows by induction on $|\tau|$.  The case $\tau = \seq{}$ is immediate.

Suppose $k(\tau_j)$ is defined, and let $\tau_n = \tau_j\cat b$.  Note that $n > b$.  Then, if $k(\tau_n)$ is defined, it is the $n$th child of $k(\tau_j)$ on $T_{j, n-1}$, so $k(\tau_n) = k(\tau_j)\cat c$ for some $c \ge n > b$.  By induction, $k(\tau_n)$ majorizes $\tau_n$.

Now suppose $\tau_{n'} = \tau_j\cat b'$ is another child of $\tau_j$ such that $n' > n$ and thus $b' > b$.  If $k(\tau_{n'})$ is defined, it is the $n'$th child of $k(\tau_j)$ on $T_{j, n'-1} \subseteq T_{j, n-1}$, and thus it is the $(\ge n')$th child of $\tau_j$ on $T_{j, n-1}$.  It follows that $k(\tau_{n'})\neq k(\tau_n)$.  By induction, $k$ is injective.
\end{proof}

\begin{claim}
If $\tau_n$ majorizes $f$, then $k(\tau_n)$ is defined, as is $\ell_m(n)$ for all $m \ge n$.
\end{claim}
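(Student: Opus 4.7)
The plan is to prove the claim by induction on $n$, with a subsidiary induction on $m \ge n$. For $n = 0$, $k(\tau_0) = \seq{}$ is defined by the initial data, and the subsidiary induction (with base case $\ell_0(0) = \seq{}$) provides $\ell_m(0)$ for all $m \ge 0$. For $n > 0$, write $\tau_j = \tau_n^-$, so $\tau_j$ majorizes $f$ with $j < n$; the outer IH applied at $j$ (with $m = n - 1 \ge j$) yields $k(\tau_j)$ and $T_{j, n-1}$, so the construction defines $k(\tau_n)$ as the $n$th child of $k(\tau_j)$ on $T_{j, n-1}$. The subsidiary induction then gives $\ell_m(n)$ for every $m \ge n$.

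For the subsidiary induction at stage $m$, I need to produce $\rho \in \omega^m$ with $\dot{r}_{A_\rho, T}(k(\tau_n)) < \infty$, where $T := T_{j, n-1}^{k(\tau_n)}$ in the case $m = n > 0$ and $T := T_{n, m-1}$ in the case $m > n$. In either case $T$ is a Laver tree with stem $k(\tau_n)$ (in the second using the subsidiary IH and the preceding claim about $T_A$), and since $k(\tau_n)$ majorizes $\tau_n \supseteq f\uhr{|\tau_n|}$, at least one $g \in [T]$ majorizing $f$ exists: descend $T$ past the stem and always pick a child above $f(|\sigma|)$.

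The key step is a compactness argument. Since $\+P$ has compact closure, there is a bound $b$ with $\Phi^g(i) \le b(i)$ whenever $g$ majorizes $f$. Hence \[S := \{\Phi^g\uhr m : g \in [T], \ g \text{ majorizes } f\} \subseteq \prod_{i<m}[0, b(i)]\] is a finite nonempty set, say $\{\rho_0, \dots, \rho_k\}$, and I claim some $\rho_i$ serves as our $\rho$. If not, set $T^{(0)} := T$ and $T^{(i+1)} := (T^{(i)})_{A_{\rho_i}}$. Lemma~\ref{lem:smaller_tree_bigger_laver_rank} gives $\dot{r}_{A_{\rho_i}, T^{(i)}}(k(\tau_n)) \ge \dot{r}_{A_{\rho_i}, T}(k(\tau_n)) = \infty$, so the ``infinite rank'' clause of the earlier $T_A$-lemma applies and each $T^{(i+1)}$ is a Laver tree with stem $k(\tau_n)$ whose paths avoid $A_{\rho_i}$. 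Now choose $g \in [T^{(k+1)}]$ majorizing $f$; then $\Phi^g\uhr m = \rho_i$ for some $i$, forcing an initial segment of $g$ into $A_{\rho_i}$, a contradiction.

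The main obstacle, I expect, is precisely this compactness reduction, which is what converts the infinitary hypothesis on $\+P$ into the finite set $S$; once $|S| < \infty$, the iterated Laver-pruning together with Lemma~\ref{lem:smaller_tree_bigger_laver_rank} produces a working $\rho_i$, whence $\ell_m(n) := \rho_i$ and $T_{n, m} := T_{A_{\rho_i}}$ are defined, closing both inductions.
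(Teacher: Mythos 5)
Your proof is correct and takes essentially the same route as the paper's: compactness of the closure of $\+P$ produces a finite set of candidate strings $\rho \in \omega^m$, and if every candidate had $\dot{r}_{A_\rho,T}(k(\tau_n)) = \infty$, then iterated pruning $T^{(i+1)} = (T^{(i)})_{A_{\rho_i}}$ (justified by Lemma~\ref{lem:smaller_tree_bigger_laver_rank}) together with a path through the final tree that majorizes $f$ yields the same contradiction via $\Phi^g \in \+P$. The only (harmless) difference is how the finite candidate set arises: you take all realized values of $\Phi^g\uhr{m}$ using a coordinatewise bound on $\overline{\+P}$, whereas the paper restricts to one-step extensions $\pi\cat y$ of the previously chosen string $\pi$, using that $[T]$ is covered by $A_\pi$.
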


\begin{proof}
This follows by induction on $|\tau_n|$. Note that $k(\tau_0)$ and $\ell_0(0)$ are explicitly defined at the start of the construction.

For $n > 0$ with $\tau_n$ majorizing $f$, let $\tau_j = \tau_n^-$.  By induction, $k(\tau_j)$ and $T_{j, n-1}$ are eventually defined, so $k(\tau_n)$ is defined.

Now, we do a subinduction on $m \ge n$ (if $n = 0$, we restrict ourselves to $m > n$).  Let $T$ be as given in the definition of $\ell_m(n)$.  By induction, $T$ is eventually defined.  Let $\pi = \ell_{m-1}(n)$ if $m > n$ and $\pi = \ell_{j}(n-1)$ if $m = n$.  Note that $[T]$ is covered by $A_\pi$ by construction.

As $\+P$ has compact closure, there are only finitely many $y$ with $[\pi\cat y] \cap \+P \neq \emptyset$.  We list these as $y_0, y_1, \dots, y_{s-1}$.  Let $A_i = A_{\pi\cat y_i}$ and, towards a contradiction, suppose $\dot{r}_{A_i,T}(k(\tau_n)) = \infty$ for every $i < s$.  Now we define a sequence of Laver trees $T = Q_0 \supseteq Q_1 \supseteq \dots \supseteq Q_s$ by $Q_{i+1} = (Q_i)_{A_i}$.    By Lemmas~\ref{lem:smaller_set_bigger_laver_rank} and~\ref{lem:smaller_tree_bigger_laver_rank}, $\dot{r}_{A_i, Q_i}(k(\tau_n)) = \infty$ for all $i < s$, and thus $[Q_s]$ is disjoint from $A_i$ for all $i < s$.  However, $\tau_n$ majorizes $f$ and $k(\tau_n)$ majorizes $\tau_n$, so $k(\tau_n)$ can be extended to a $g \in [Q_s]$ majorizing $f$.  Then $\Phi^g \in \+P$ by assumption and $g \in [T]$, so $g$ meets $A_\pi$ and thus $\pi \subset \Phi^g$.  But $g$ avoids all $A_i$, and thus $\pi\cat y_i \not \subset \Phi^g$ for any $i < s$, giving us a contradiction.

Thus there is some $y_i$ such that $\dot{r}_{A_i, T}(k(\tau_n)) < \infty$, and so $\rho = \pi\cat y_i$ is as desired for the definition of $\ell_m(n)$.
\end{proof}

Thus our construction has the stated properties.  Now we define
\[
V_m = \{ \ell_m(n) : n \le m\}.
\]
These are uniformly $\Pi^1_1$ and of the correct size.  We can also define
\[
T = \{ k(\tau) : \text{$\tau$ majorizes $f$}\}
\]
and observe that $T$ is a stemless Laver tree.

\begin{claim}
For every $g \in [T]$, $(V_m)_{m \in \omega}$ is an $h$-trace of $\Phi^g$.
\end{claim}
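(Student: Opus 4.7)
The plan is to prove by induction on $m$ that for every $g \in [T]$ there is some $n \le m$ with $g \in [T_{n,m}]$. Once this is in hand, the claim follows easily: by the construction of $\ell_m(n)$ we have $\dot{r}_{A_{\ell_m(n)},\cdot}(k(\tau_n)) < \infty$, so $T_{n,m}$ is a Laver tree covered by $A_{\ell_m(n)} \cap T_{n,m}$, whence every path through $T_{n,m}$ extends $\ell_m(n)$; since $|\ell_m(n)| = m$, this gives $\Phi^g \uhr m = \ell_m(n) \in V_m$, which is exactly what we want.

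To set up the induction I will write $g = \bigcup_i k(\tau \uhr i)$ for some $\tau \in \omega^\omega$ majorizing $f$ and let $n_i$ denote the index of $\tau \uhr i$ in the enumeration, so the initial segments of $g$ lying in $T$ are precisely $k(\tau_{n_0}) \subset k(\tau_{n_1}) \subset \cdots$ with $|k(\tau_{n_i})| = i$. The base case $m = 0$ is immediate by taking $n = 0$, since $T_{0,0} = \omega^{<\omega}$. For the inductive step the two structural facts I will rely on are: (i) for $m < n_{i+1}$ the monotonicity $T_{n_i,m} \supseteq T_{n_i,n_{i+1}-1}$ combined with the construction of $k(\tau_{n_{i+1}})$ as the $n_{i+1}$-th child of $k(\tau_{n_i})$ on $T_{n_i,n_{i+1}-1}$ ensures that $k(\tau_{n_{i+1}})$ still lies on $T_{n_i,m}$; and (ii) when $m$ reaches $n_{i+1}$, the freshly constructed Laver sub-tree $T_{n_{i+1},n_{i+1}} = (T_{n_i,n_{i+1}-1}^{k(\tau_{n_{i+1}})})_{A_{\ell_{n_{i+1}}(n_{i+1})}}$ is rooted at the extended stem $k(\tau_{n_{i+1}})$ and becomes the natural candidate for the next value of $n$ in the induction.

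The main obstacle is showing that $g$ genuinely stays inside the Laver sub-tree $T_{n,m}$: the pruning $(\cdot)_{A_{\ell_m(n)}}$ imposes a strictly decreasing-rank condition on path initial segments before they meet $A_{\ell_m(n)}$, and this is not automatic for arbitrary paths through $T_{n,m-1}$. The essential tool here is the compactness of $\overline{\+P}$, already invoked in the existence proof for $\ell_m(n)$: only finitely many one-symbol extensions $\pi \cat y$ of the current chain prefix $\pi$ satisfy $[\pi \cat y] \cap \+P \neq \emptyset$, and since $\Phi^g \in \+P$ (because $g$ majorizes $f$) the next symbol $\Phi^g(m-1)$ must lie among these finitely many options. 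A combinatorial tracking argument then matches $g$'s trajectory to one of the Laver sub-trees $T_{n,m}$ appearing in the construction, transitioning from the current index $n_i$ to a later $n_{i'} \le m$ whenever the current choice no longer captures the correct branch; this completes the induction and proves $\Phi^g \uhr m \in V_m$ for every $m$.
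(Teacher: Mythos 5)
Your skeleton is the right one (and is the paper's): for each $m$, find $n \le m$ with $g \in [T_{n,m}]$, note that $T_{n,m}$ is covered by $A_{\ell_m(n)} \cap T_{n,m}$ because $\ell_m(n)$ was chosen with finite Laver rank at the stem, conclude that $g$ meets $A_{\ell_m(n)}$ and hence $\Phi^g\uhr{m} = \ell_m(n) \in V_m$ (minor wording point: the path meets $A_{\ell_m(n)}$; it is $\Phi^g$, not $g$, that extends $\ell_m(n)$). The genuine gap is in your third paragraph: the statement you label the ``main obstacle'' --- that $g$ actually lies in the pruned tree $T_{n,m}$ --- is the entire content of the step, and your proposed resolution does not establish it. Compactness of $\overline{\+P}$ plus $\Phi^g \in \+P$ only says that $\Phi^g\uhr{m}$ is one of the finitely many strings $\pi\cat y$ extending $\pi = \ell_{m-1}(n)$ with $[\pi\cat y]\cap \+P \neq \emptyset$; but for this $n$ the construction puts only one of those candidates (the first found with finite Laver rank) into $V_m$ as $\ell_m(n)$, so knowing $\Phi^g\uhr m$ is ``among the options'' does not place it in $V_m$. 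The closing ``combinatorial tracking argument \dots transitioning to a later $n_{i'} \le m$'' is exactly the claim being proved, asserted rather than argued; moreover, for a fixed $m$ there is nothing to transition to: if $g \in [T_{n',m}]$ then the stem $k(\tau_{n'})$ is an initial segment of $g$, so (since $k$ is injective and length preserving) $\tau_{n'} \subset \tau$, and the only candidates are the $n_i \le m$, of which the largest is the one that works.

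What closes the gap is not $\+P$ at all but the nesting of the trees --- your facts (i) and (ii) pushed through \emph{all} later levels. Take $n = n_i$ largest with $n_i \le m$, so $m < n_{i+1}$. Since $\tau_{n_i}$ majorizes $f$, the trees $T_{n_i, m'}$ are defined for all $m' \ge n_i$ and are decreasing in $m'$, so $T_{n_i, n_{i+1}-1} \subseteq T_{n_i, m}$; in particular $k(\tau_{n_{i+1}})$, chosen as a child of $k(\tau_{n_i})$ on $T_{n_i, n_{i+1}-1}$, lies in $T_{n_i,m}$. For every $j > i$ we have $T_{n_j, n_j} = \bigl(T_{n_{j-1}, n_j-1}^{\,k(\tau_{n_j})}\bigr)_{A_{\ell_{n_j}(n_j)}} \subseteq T_{n_{j-1}, n_j-1} \subseteq T_{n_{j-1}, n_{j-1}}$, so chaining these inclusions shows that every tree $T_{n_{j-1}, n_j - 1}$ used to select a node of $g$ beyond the stem is a subtree of $T_{n_i, m}$. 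Hence every initial segment $k(\tau_{n_j})$ of $g$ with $j \ge i$ lies in $T_{n_i,m}$ (those with $j < i$ are initial segments of the stem and are there by definition of the pruning), so $g \in [T_{n_i,m}]$. In other words, there is no ``surviving the pruning'' issue for $g$: its nodes were chosen inside the already-pruned trees, which is what the paper's ``by construction'' is encoding; compactness of $\overline{\+P}$ is needed only earlier, to show $\ell_m(n)$ exists, not here.
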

\begin{proof}
Fix $n \le m$ largest with $\tau_n \subset g$, and let $\rho = \ell_m(n)$.  Then, by construction, $g \in [T_{n, m}]$ and thus $g$ meets $A_\rho$. This guarantees that $\Phi^g\uhr{m} = \rho \in V_m$.
\end{proof}

This completes the proof of Lemma~\ref{lem:tracing}.
\end{proof}

\subsection{A class without $\Pi^1_1$ traces} At this point, we can finally complete the separation of uniform highness for descending sequences and highness for isomorphism. Suppose $\+H$ is a Harrison order and $(X_a)_{a \in \+H}$ is a jump structure on $\+H$.  We wish to show that there is no $\Pi^1_1$ $h$-trace for $(X_a)_{a \in \+H}$ for an appropriately chosen computable order $h$, not depending on $(X_a)_{a \in \+H}$.

\begin{lem}\label{lem:approximating_finite_pi11}
Suppose $V$ is a finite set with $|V| \le n$.  Then there are sets $(U_a)_{a \in H}$ satisfying:
\begin{itemize}
\item For all $a$, $|U_a| \le n$;
\item For all $a < b$, $U_a \subseteq U_b$;
\item $V = \bigcup_{a \in W(\+H)} U_a$; and
\item Each $U_a$ is enumerable from $X_a$ uniformly in $a$, $n$, and a $\Pi^1_1$ index for $V$.
\end{itemize}
\end{lem}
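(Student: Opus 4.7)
The plan is to work with a universal $\Pi^1_1$ presentation of $V$: from a $\Pi^1_1$ index, I would uniformly extract a computable sequence of trees $(T_x)_{x\in\omega}$ with $T_x \subseteq \omega^{<\omega}$ such that $x \in V$ if and only if $T_x$ is well-founded. For $x \in V$, the Kleene--Brouwer order $KB(T_x)$ is then a computable well-ordering of some rank $\beta_x < \omega_1^{ck}$. Since $W(\+H)$ is order-isomorphic to (an initial segment of) $\omega_1^{ck}$, each such $\beta_x$ is realized by some element of $W(\+H)$.

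The construction would define, for each $a \in \+H$, the set $U_a$ to consist of those $x$ for which there exists an order-preserving injection $KB(T_x) \hookrightarrow \+H_{\le a}$. Any such injection witnesses that $KB(T_x)$ is a well-order of type at most the rank of $a$ in $W(\+H)$, which forces $x \in V$. Three of the required properties are then essentially automatic: the containment $U_a \subseteq V$ gives $|U_a| \le |V| \le n$; monotonicity $U_a \subseteq U_b$ for $a < b$ holds because any embedding into $\+H_{\le a}$ is also one into $\+H_{\le b}$; and $\bigcup_{a \in W(\+H)} U_a = V$ because every $x \in V$ has $KB(T_x)$ of some rank $\beta_x < \omega_1^{ck}$, which is realized by some $a \in W(\+H)$ large enough that $KB(T_x) \hookrightarrow \+H_{\le a}$.

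The substantive step is the enumerability clause: we need $X_a$ to enumerate $U_a$ uniformly in $a$, $n$, and the $\Pi^1_1$ index for $V$. Here I would exploit the transfinite recursion encoded by the jump hierarchy: by the defining relation $X_a = \bigoplus_{b<a} X_b'$, the oracle $X_a$ can carry out the standard stage-by-stage search for an order-preserving injection $f : KB(T_x) \hookrightarrow \+H_{\le a}$, at each stage using $X_a$'s access to $X_b'$ for $b < a$ to test whether a partial injection is extendible. Any partial injection that succeeds forever enumerates $x$ into $U_a$; if $x \in U_a$ in truth, this procedure finds a witness because the jump hierarchy along $W(\+H)_{\le a}$ computes, uniformly, a $\Delta^0_{1+\alpha_a}$-level approximation to the rank function on $T_x$, which is enough to produce the embedding. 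All the data needed to set this procedure up (an index for the sequence $(T_x)_{x\in\omega}$, the element $a$, and the bound $n$) appear as explicit parameters, so the uniformity is preserved.

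The main obstacle is the rigorous bookkeeping for the enumeration. One has to argue that the ``extendibility'' query at each stage is genuinely $X_a$-decidable (not merely $\Pi^1_1(X_a)$) via the jump hierarchy, and that $n$ can be used to terminate redundant searches without breaking monotonicity. The bound $|V| \le n$ is what guarantees we never enumerate too much: if an apparent witness appears for some $(n+1)$-st element, then, since $U_a \subseteq V$, it must already be one of the first $n$ witnesses, so the procedure is naturally self-limiting.
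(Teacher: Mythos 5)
Your construction is only sound on the well-founded part of $\+H$, but the lemma's first clause --- $|U_a| \le n$ for \emph{all} $a \in \+H$ --- is precisely about the ill-founded part, and that is where the lemma gets used: in Proposition~\ref{thm:jump_structures_untraced} the relevant element $b$ is chosen in $\+H\setminus W(\+H)$, and the argument needs $U_b \supseteq V$, $|U_b| \le n$, and enumerability from $X_b$ simultaneously. Your justification that membership in $U_a$ forces $x \in V$ (``any order-preserving injection $KB(T_x) \hookrightarrow \+H_{\le a}$ witnesses that $KB(T_x)$ is a well-order'') is valid only when $\+H_{\le a}$ is well founded. For $a \notin W(\+H)$, the order $\+H_{\le a}$ is ill founded and in fact contains a suborder of type $\omega_1^{ck}(1+\bQ)$, into which every countable linear order embeds; so with your definition $U_a$ would be all of $\omega$ there (and restricting to embeddings that $X_a$ can find does not obviously restore the bound, nor do you argue that it does). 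Your closing remark that the procedure is ``naturally self-limiting'' because $U_a \subseteq V$ is therefore circular: $U_a \subseteq V$ is exactly what fails off the well-founded part, and this failure is intrinsic to any local test of this kind, not an artifact of your particular one.

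This is the real content of the lemma, and the paper's proof adds two mechanisms your sketch is missing. First, a counting cutoff: starting from the test $R(i,a)$ (``$X_a$ computes a tree-rank function from $T_i$ to $\+H_{<a}$''), it admits $i$ into $W_a$ only if there do not already exist $n$ witnesses of smaller index or from earlier levels, and then proves both that this caps $|\widehat{W}_a|$ at $n$ for every $a$ (including ill-founded $a$, where $R(\cdot,a)$ overshoots $V$) and that the cutoff never blocks a genuine element of $V$ at the first level where it is certified, so the union over $W(\+H)$ is still exactly $V$. Second, a level shift to get enumerability: ``$X_a$ computes a rank function'' is $\Sigma^0_3(X_a)$ and the cutoff makes the definition $\Sigma^0_4(X_a)$, which $X_a$ does not enumerate about itself; the paper therefore sets $U_a = \bigcup_{b+3\le a} \widehat{W}_b$, using monotonicity and the fact that $X_a$ computes $X_b'''$ once $b+3 \le a$. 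Your plan has the same problem even on the well-founded part --- deciding whether an embedding of $KB(T_x)$ into $\+H_{\le a}$ exists sits strictly above $X_a$ --- and repairing it forces you to redefine $U_a$ in terms of what strictly smaller levels compute, after which the size bound and the union property must be re-verified; without the counting cutoff that verification cannot go through.
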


\begin{proof}
We fix uniformly computable trees $T_i \subseteq \omega^{<\omega}$ such that $T_i$ is well founded if and only if $i \in V$.  Define $R(i, a)$ to be the relation ``$X_a$ computes a tree-rank function from $T_i$ to $\+H_{< a}$'' (a standard tree-rank function, not a Laver ranking).  Note that $\{ i : R(i, a)\}$ is uniformly $\Sigma^0_3(X_a)$ and that $i \in V$ if and only if there is some $a \in W(\+H)$ satisfying $R(i,a)$.

Now we define
\[
W_a = \left\{ i : R(i, a) \wedge \neg \exists j_0 < \dots < j_{n-1} \bigwedge_{s < n} [(j_s < i \wedge R(j_s, a)) \vee (\exists b < a) R(j_s, b)]\right\}.
\]
Define $\widehat{W}_a = \bigcup_{b \le a} W_b$.  We claim that $(\widehat{W}_a)_{a \in \+H}$ is as desired except for not being uniformly enumerable from $X_a$.

First, towards a contradiction, we fix some $a$ with $|\widehat{W}_a| > n$ and then fix distinct $i_0, \dots, i_n \in \widehat{W}_a$.  For each $s \le n$, we fix $b_s \le a$ with $i_s \in W_{b_s}$ and let $b = \max\{b_s : s \le n\}$.  Now we let $i = \max (W_b \cap \{i_0, \dots, i_n\})$ and derive a contradiction from $i \in W_b$.

Clearly, for $a \in W(\+H)$, we have that $W_a \subseteq V$.  For $i \in V$, let $a$ be least such that $R(i,a)$ holds.  Then $\bigcup_{b < a} W_b \subseteq V \setminus \{i\}$, and so $|\bigcup_{b < a} W_b| < n$.  This gives us that $i \in W_a$, and it follows that $V = \bigcup_{a \in W(H)} \widehat{W}_a$.

Now we observe that $\widehat{W}_a$ is $\Sigma^0_4(X_a)$ uniformly in $a$, $n$, and a $\Pi^1_1$ index for $V$, so we define
\[
U_a = \bigcup_{b+3 \le a} \widehat{W}_b.\qedhere
\]
\end{proof}

\begin{prop}\label{thm:jump_structures_untraced}
Let $\+H$ be a Harrison order.  There is a computable order $h$ such that no jump structure on $\+H$ has a $\Pi^1_1$ $h$-trace.
\end{prop}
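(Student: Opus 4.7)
The plan is to choose $h$ as a sufficiently slow-growing computable order and derive a contradiction from the assumption that a jump hierarchy on $\+H$ is $\Pi^1_1$ $h$-traced, by pulling the trace through the jump hierarchy via Lemma~\ref{lem:approximating_finite_pi11}. Fix a computable pairing of $\+H \times \omega$ with $\omega$ so that a jump hierarchy $X = (X_a)_{a \in \+H}$ is identified with a single real. Suppose $(V_n)_{n \in \omega}$ is a $\Pi^1_1$ $h$-trace of $X$. Applying Lemma~\ref{lem:approximating_finite_pi11} uniformly in $n$, we obtain sets $(U_{n,a})_{a \in \+H}$ with $|U_{n,a}| \le h(n)$, $U_{n,a} \subseteq U_{n,b}$ for $a \le_{\+H} b$, $V_n = \bigcup_{a \in W(\+H)} U_{n,a}$, and $U_{n,a}$ uniformly c.e.\ from $X_a$.

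For each $n$, since $X\uhr n \in V_n$, the upward-closed set $\{a \in \+H : X\uhr n \in U_{n,a}\}$ meets $W(\+H)$; its $\+H$-least element $a(n)$ thus lies in $W(\+H)$, which we identify with $\omega_1^{ck}$. The function $n \mapsto a(n)$ is $X$-computable via the enumeration of $U_{n,a}$ from $X_a \le_T X$. The crucial claim is that, provided $h$ is chosen slowly enough, the range of $a$ must be cofinal in $W(\+H) = \omega_1^{ck}$. The intuition is that $X\uhr n$ encodes bits of $X_b$ for ever-larger $b \in \+H$ along the pairing, and the only way for $U_{n,a}$ to contain such a string from an $X_a$-enumeration is for $a$ to dominate $b$ in the hyperarithmetic hierarchy; when $h(n)$ is small enough that $U_{n,a}$ cannot enumerate by brute force the candidate strings for $X\uhr n$, this forces $a(n) \to \omega_1^{ck}$ as $n \to \infty$.

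Once cofinality is established, $X$ $X$-computably witnesses a well-order of type $\omega_1^{ck}$ (namely $W(\+H) = \bigcup_n \+H_{\le a(n)}$, as an $X$-c.e.\ initial segment of the computable order $\+H$), from which a careful decoding gives $\omega_1^X > \omega_1^{ck}$. Applying the Gandy Basis Theorem to the nonempty $\Sigma^1_1$ class of jump hierarchies on $\+H$, we fix a Gandy-minimal $X^\ast$ with $\omega_1^{X^\ast} = \omega_1^{ck}$; if $X^\ast$ were $\Pi^1_1$ $h$-traced, the above contradicts $\omega_1^{X^\ast} = \omega_1^{ck}$. To extend to arbitrary jump hierarchies, we observe that if any jump hierarchy is traced by a given $\Pi^1_1$ sequence $(V_n)$, then the $\Sigma^1_1$ class obtained by intersecting the class of jump hierarchies with the trace condition (moved inside via a $\Sigma^1_1$ reformulation of ``$Y\uhr n \in V_n$ for all $n$'' using existential quantification over $\Pi^1_1$ approximation indices) is nonempty, so Gandy supplies an $\omega_1$-minimal element and reduces to the previous case. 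The principal obstacle is establishing the cofinality claim: this requires a delicate calibration of $h$ to the coding of jump hierarchies, ensuring that the $X_a$-enumerations of the small sets $U_{n,a}$ cannot shortcut the hyperarithmetic complexity of $X\uhr n$.
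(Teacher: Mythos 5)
Your strategy---derive $\omega_1^X > \omega_1^{ck}$ from the existence of a trace and then appeal to a Gandy-basic jump hierarchy---is genuinely different from the paper's, but each of its three load-bearing steps has a gap, and the first is essentially the whole content of the proposition. The map $n \mapsto a(n)$ is not $X$-computable as you claim: Lemma~\ref{lem:approximating_finite_pi11} only makes $U_{n,a}$ \emph{enumerable} from $X_a$, so from $X$ you can recognize membership $X\uhr{n} \in U_{n,a}$ but never non-membership, hence you cannot identify the $\+H$-least witness; a blind search will typically return witnesses in the ill-founded part (where $U_{n,a}$ contains every $U_{n,a'}$ with $a' \in W(\+H)$), and these cannot be distinguished from well-founded ones (indeed, for a Gandy-basic jump hierarchy $X^*$ one has $W(\+H) \not\le_T X^*$). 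Without the least witnesses you get no $X$-c.e.\ presentation of $W(\+H)$, so the admissibility contradiction never materializes even granting cofinality. Moreover the cofinality claim itself, which you flag as the ``principal obstacle,'' is exactly where the work lies: making precise the intuition that a small $U_{n,a_0}$ cannot contain $X\uhr{n}$ for a fixed $a_0$ amounts to a diagonalization in which the next jump level defeats each of the $\le h(n)$ enumerated candidates, and arranging that $h$ is slow enough \emph{relative to the diagonalizing indices} requires the recursion theorem to fix those indices before $h$ is defined. This is precisely what the paper does, except it diagonalizes at a nonuniformly chosen $b+1$ in the ill-founded part, where the lemma guarantees $V_{r_i} \subseteq U_b$ with $U_b$ enumerable from $X_b$ and of size at most $h(r_i) \le i+1$, so the $i+1$ controlled indices $e_{i,j}$ suffice to contradict every candidate; no detour through $\omega_1^X$ is needed, and the argument applies to every jump structure at once.

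Finally, your reduction of the general case to the Gandy-minimal case fails: for a fixed uniformly $\Pi^1_1$ sequence $(V_n)$, the condition ``$Y\uhr{n} \in V_n$ for all $n$'' is $\Pi^1_1$, so the class of jump hierarchies traced by $(V_n)$ is (an intersection of a $\Pi^0_2$ class with) a $\Pi^1_1$ class, not a $\Sigma^1_1$ one, and the Gandy Basis Theorem does not apply. The proposed repair---existentially quantifying over ``$\Pi^1_1$ approximation indices''---does not rescue this: a stage of a $\Pi^1_1$ enumeration is coded by a well-order, well-orderedness is itself $\Pi^1_1$, and prefixing an existential real quantifier lands you in $\Sigma^1_2$, not $\Sigma^1_1$. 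So even if the first two gaps were closed, the argument would only rule out traces for jump hierarchies with $\omega_1^X = \omega_1^{ck}$, not for all of them, which is what the proposition demands.
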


\begin{proof}
We fix some $b+1 \in \+H\setminus W(\+H)$ nonuniformly and let $J^Z(e) = \Phi^Z_e(e)$ for all $e \in \omega$ and $Z \in 2^{\omega}$.  By the recursion theorem, there is an injective computable sequence $e_{i, j}$ for $i \in \omega$ and $j\le i$ such that we control the behavior of each $J(e_{i,j})$.  We let $r_i = \max\{ \seq{b+1, e_{i, j}}+1 : j \le i\}$ and define a computable order $h$ such that $h(r_i) \le i+1$ for all $i$.

Now we let $((V_m^i)_{m \in \omega})_{i \in \omega}$ be a uniform listing of all $\Pi^1_1$ $h$-traces.  We let $(U_a^i)_{a \in H}$ be as in Lemma~\ref{lem:approximating_finite_pi11} for $V = V_{r_i}^i$ and $n = h(r_i)$ and define $J^Z(e_{i,j})$ as follows: we first attempt to enumerate $U_b^i$, working under the assumption that $Z = X_b$; and, letting $\sigma_0, \sigma_1, \dots$ be the elements we enumerate in order of enumeration, we define $J^Z(e_{i,j})\converge$ if and only if $\sigma_j$ exists and $\sigma_j(e_{i,j}) = 0$.

Now we suppose $(X_a)_{a \in \+H}$ is a jump structure on $\+H$.  Towards a contradiction, we further suppose $(V_n^i)_{n \in \omega}$ traces $X = \{ \seq{a, x} : x \in X_a\}$, and we fix $\sigma \in V_{r_i}^i$ an initial segment of $X$.  Then $V_{r_i}^i \subseteq U_b^i$ and $U_b^i$ is enumerated by $X_b$, so $\sigma = \sigma_j$ for some $j < h(r_i) \le i+1$.  Then
\[
\seq{b+1, e_{i, j}} \in X \iff e_{i,j} \in X_{b+1} \iff J^{X_b}(e_{i,j})\converge \iff \sigma(e_{i,j}) = 0,
\]
which contradicts $\sigma$ being an initial segment of $X$.
\end{proof}

\begin{lem}\label{lem:dominating_to_majorizing_for_Pi01}
Suppose $\+P$ is a $\Pi^0_1$ class and $f \in \omega^\omega$ is such that every function dominating $f$ computes an element of $\+P$.  Then there is a $\Phi$ and an $f'$ such that for every function $g$ majorizing $f'$, $\Phi^g$ is an element of $\+P$.
\end{lem}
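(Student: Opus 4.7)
The approach is to argue by contrapositive.  Fix a computable tree $T$ with $[T] = \+P$, and assume no such $(\Phi, f')$ exists: for every Turing functional $\Phi_e$ and every function $f'$, there is some $g$ majorizing $f'$ with $\Phi_e^g \notin \+P$.  I will then construct a function $g$ majorizing $f$ computing no element of $\+P$, contradicting the hypothesis of the lemma.

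The construction proceeds by finite extension, producing $\sigma_0 \preceq \sigma_1 \preceq \cdots$ with each $\sigma_e$ majorizing $f\uhr|\sigma_e|$ and arranging that $\Phi_e^{g} \notin [T]$ for every $e$, where $g = \bigcup_e \sigma_e$.  At stage $e$, the two straightforward moves are: (i) if some finite $\sigma' \supseteq \sigma_e$ majorizing $f$ makes $\Phi_e^{\sigma'}$ converge on a value outside $T$, take $\sigma_{e+1} = \sigma'$, committing any future extension to compute a value off $T$; and (ii) if some majorizing $\sigma' \supseteq \sigma_e$ and some position $n$ are such that every majorizing extension of $\sigma'$ has $\Phi_e^g(n) \diverge$, take $\sigma_{e+1} = \sigma'$, committing future extensions to partiality at $n$.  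The contrapositive hypothesis applied to $\Phi_e$ and the function equal to $\sigma_e$ on its length and $f$ beyond provides a witness $g^{*}$ with $\Phi_e^{g^{*}} \notin [T]$; if $\Phi_e^{g^{*}}$ is total, an initial segment witnesses (i).

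The main obstacle is the residual scenario where neither (i) nor (ii) is available: every convergent value of $\Phi_e$ above $\sigma_e$ lies in $T$, convergence of $\Phi_e$ at every position is dense in the tree of majorizing extensions of $\sigma_e$, and yet some $g^{*}$ majorizing $\sigma_e \cat f \uhr [|\sigma_e|, \infty)$ has $\Phi_e^{g^{*}}$ partial.  I would rule this case out by constructing, from the density, a pair $(\widetilde\Phi, \widetilde f)$ directly witnessing the conclusion of the lemma and hence contradicting the contrapositive.  The functional $\widetilde\Phi^{\tilde g}$ performs a bounded search using $\tilde g$ as oracle: inductively at step $n$ it extends its prior choice $\tau$ to a finite $\tau' \supseteq \tau$ with $\tau'(i) \geq \tilde g(i)$ on the new portion and $\Phi_e^{\tau'}(n) \converge$, then outputs $\Phi_e^{\tau'}(n)$; density guarantees such a $\tau'$ exists, the no-off-$T$-values property forces the output into $T$, and extending prior $\tau$ preserves earlier values so the outputs form a path in $T$.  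The key technical step is producing $\widetilde f$ so the search succeeds uniformly across all $\tilde g \geq \widetilde f$; this requires strengthening the plain density (guaranteed only for extensions majorizing $f$) to a density that survives when extensions are required to majorize $\tilde g$, which I expect can be achieved by a Laver-style fusion on the tree of majorizing strings in the spirit of the constructions of Section~\ref{sec:descending}.
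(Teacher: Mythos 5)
Your architecture---contrapositive plus finite-extension diagonalization against each $\Phi_e$---is reasonable, but the gap you flag at the residual case is the entire content of the lemma, and the repair you sketch does not fill it. The density you actually have in the residual case says that convergence of $\Phi_e$ is dense among extensions majorizing $f$; the search inside $\widetilde\Phi^{\tilde g}$, however, must find extensions majorizing the \emph{oracle} $\tilde g$, since $f$ need not be computable and the oracle is the only lower bound the functional can consult. These two densities genuinely differ and the stronger one is not a consequence of the residual-case data: for instance, $\Phi_e$ could be arranged to converge at $n$ only when its oracle takes a value $\le n$ somewhere beyond $n$ (outputting something consistent with $T$), so that if $f$ is small both your clauses (i) and (ii) fail, yet for a fast-growing bound $\widetilde f$ no $\tilde g$-majorizing extension converges at $n$ at all. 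No fusion carried out on the fixed tree of $f$-majorizing strings can manufacture a density that is simply false in such a situation. (There is also a minor slip: the witness $g^{*}$ obtained from the contrapositive hypothesis majorizes the function ``$\sigma_e$ then $f$'' but need not \emph{extend} $\sigma_e$; to feed clause (i) you should apply the hypothesis to the shifted functional $h \mapsto \Phi_e^{\sigma_e\cat h}$.)

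The correct repair is to enrich your conditions so that the divergence-forcing move may also impose a faster-growing bound on all future extensions: work with pairs $(\sigma, g)$ with $\sigma \subset g$, where extension means extending the stem while majorizing the side function. Then the residual case becomes: for every stronger pair $(\sigma', g')$ and every $n$, convergence at $n$ occurs at some stem majorizing $g'$---which is exactly the uniform density $\widetilde\Phi$ needs, with $\widetilde f$ the current side function. This is in essence the paper's proof: it forces with these Hechler conditions, notes that the generic real dominates $f$ and hence $\Psi^{g_F} \in \+P$ for some $\Psi$, extracts a condition $(\tau, f')$ no extension of which either forces a value off $\+P$ or forces divergence at some $n$, and then lets $\Phi^g$ search, for $g$ majorizing $f'$, for stems forcing longer and longer convergence whose new values are at least $g$. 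So keep your construction, but build the bounding function into the conditions rather than trying to recover the stronger density after the fact.
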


\begin{proof}
We perform Hechler forcing: a condition is a pair $(\sigma, g)$ such that $\sigma \in \omega^{<\omega}$, $g \in \omega^\omega$, and $\sigma \subset g$; we say that $(\sigma_0, g_0) \le (\sigma_1, g_1)$ if $\sigma_0 \supseteq \sigma_1$ and $g_0$ majorizes $g_1$.

If $F$ is a sufficiently generic filter, $g_F = \bigcup_{(\sigma, g) \in F} \sigma$ is a total function majorizing every $g$ with $(\sigma, g) \in F$.  Note that for any condition $(\sigma, g)$, if we define
\[
g'(x) = \left\{\begin{array}{cl} \sigma(x) & x < |\sigma|\\ \max\{f(x), g(x)\} & \text{otherwise,}\end{array}\right.
\]
then $g'$ dominates $f$ and $(\sigma, g') \le (\sigma, g)$.  So for a sufficiently generic filter $F$, $g_F$ will dominate $f$.  We fix $\Psi$ such that $\Psi^{g_F} \in \+P$.  Now we consider the following set of conditions:
\[
D = \{ (\sigma, g) : [\Psi^\sigma] \cap \+P = \emptyset \} \cup \{ (\sigma, g) : \exists n\, \forall (\sigma', g') \le (\sigma, g)\, [\Psi^{\sigma'}(n)\diverge]\}.
\]
$F$ must not meet $D$, so there is some $(\tau, f') \in F$ with no extension in $D$.  We claim this is our desired $f'$.

We define a functional $\Phi$ as follows: given $g$ majorizing $f'$, we define a sequence $\tau = \sigma_0 \subset \sigma_1 \subset \dots$ such that $\Psi^{\sigma_n}\uhr{n}\converge$ and for all $x$ with $|\sigma_n| \le x < |\sigma_{n+1}||$, $\sigma_{n+1}(x) \ge g(x)$.  Then we define $\Phi^g = \bigcup_n \Psi^{\sigma_n}$.

Now, towards a contradiction, we suppose $n$ is least such that there is no $\sigma_{n+1}$.  Then for
\[
g'(x) = \left\{\begin{array}{cl} \sigma_n(x) & x < |\sigma_n|\\ g(x) & \text{otherwise,}\end{array}\right.
\]
$(\sigma_n, g') \in D$ is an extension of $(\tau, f')$, giving us a contradiction.  Thus $\Phi^g$ is total.

Again towards a contradiction, we suppose $\Phi^g \not \in \+P$.  Then there is some least $n$ such that $[\Psi^{\sigma_n}] \cap \+P = \emptyset$.  For the same $g'$ as previously, $(\sigma_n, g') \in D$ is an extension of $(\tau, f')$, once more giving us a contradiction.
\end{proof}

Let $\+H$ be a Harrison order which admits a jump structure.  Then the collection of jump structures on $\+H$ is a $\Pi^0_2$ subset of $2^\omega$, and there is a $\Pi^0_1$ $\+P \subseteq \omega^\omega \times 2^\omega$ which projects onto $\+H$.

\begin{thm}\label{thm:jump_structures_not_fast_growing_property}
Let $\+H$ be a Harrison order which admits a jump structure, and let $\+P$ be a $\Pi^0_1$ class which projects onto the jump structures of $\+H$. Then for any $f \in \omega^\omega$, there is a $g$ dominating $f$ which does not compute an element of $\+P$.
\end{thm}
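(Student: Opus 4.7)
The plan is to argue by contradiction, chaining together Lemma~\ref{lem:dominating_to_majorizing_for_Pi01}, Lemma~\ref{lem:tracing}, and Proposition~\ref{thm:jump_structures_untraced}. Suppose, toward a contradiction, that some $f \in \omega^\omega$ has the property that every function dominating $f$ computes an element of $\+P$.

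First, I would invoke Lemma~\ref{lem:dominating_to_majorizing_for_Pi01}, identifying $\omega^\omega \times 2^\omega$ with $\omega^\omega$ in the usual effective way so that $\+P$ becomes a $\Pi^0_1$ subclass of $\omega^\omega$. This produces a Turing functional $\Phi$ and a function $f'$ such that $\Phi^g \in \+P$ whenever $g$ majorizes $f'$. At this point the original ``computes an element'' hypothesis has been upgraded to a uniform one via a single reduction.

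Next, I would project onto the jump structures of $\+H$. Let $\+J \subseteq 2^\omega$ denote the class of jump structures on $\+H$; this class has compact closure in $\omega^\omega$, since $2^\omega \subseteq \omega^\omega$ is compact. Using that $\+P$ projects onto $\+J$ by assumption, define a Turing functional $\Psi$ by letting $\Psi^g$ be the $2^\omega$-coordinate of $\Phi^g$. Then $\Psi^g \in \+J$ for every $g$ majorizing $f'$, so the hypotheses of Lemma~\ref{lem:tracing} are satisfied for $\+J$, $\Psi$, and $f'$.

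Finally, let $h$ be the specific computable order produced by Proposition~\ref{thm:jump_structures_untraced} for the Harrison order $\+H$. Applying Lemma~\ref{lem:tracing} to this $h$ yields an element of $\+J$, i.e., a jump structure on $\+H$, that admits a $\Pi^1_1$ $h$-trace; this flatly contradicts Proposition~\ref{thm:jump_structures_untraced}. I expect the projection-and-identification bookkeeping to be the only real subtlety: once $\Psi$ is set up correctly and one verifies that $\+J$ genuinely has compact closure in $\omega^\omega$, the argument is simply a clean concatenation of the preceding tools, with all of the combinatorial work already performed in Lemma~\ref{lem:tracing} and Proposition~\ref{thm:jump_structures_untraced}.
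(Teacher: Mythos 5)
Your proposal is correct and follows essentially the same route as the paper's proof: negate the conclusion, apply Lemma~\ref{lem:dominating_to_majorizing_for_Pi01} to pass from domination to majorization, compose with the projection onto the jump structures (a class with compact closure inside $2^\omega$), and then apply Lemma~\ref{lem:tracing} with the order $h$ from Proposition~\ref{thm:jump_structures_untraced} to reach a contradiction. The only difference is that you spell out the identification and projection bookkeeping slightly more explicitly than the paper does.
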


\begin{proof}
Suppose not.  Then, by Lemma~\ref{lem:dominating_to_majorizing_for_Pi01}, there are $\Phi$ and $f$ such that for every $g$ majorizing $f$, $\Phi^g \in \+P$.  By composing with projection, we get $\Psi$ such that for every $g$ majorizing $f$, $\Psi^g$ is a jump structure on $\+H$.  As the collection of jump structures is a subset of $2^\omega$, it has compact closure, and by Lemma~\ref{lem:tracing}, for every computable order $h$ there is a jump structure on $\+H$ with a $\Pi^1_1$ $h$-trace.  This contradicts Proposition~\ref{thm:jump_structures_untraced}.
\end{proof}

\begin{lem}\label{lem:descending_sequences_fast_growing_property}
If $\+L$ is a computable linear order (where we identify the domain with $\omega$) and $f$ is a descending sequence (thought of as an element of $\omega^\omega$), then any $g$ majorizing $f$ uniformly computes a descending sequence through $\+L$.
\end{lem}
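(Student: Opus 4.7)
The plan is to give an explicit, uniform construction of a descending sequence $a_0 >_{\+L} a_1 >_{\+L} \cdots$ from $g$ by, at each stage, taking the $\+L$-largest element of $\{0,1,\dots,g(n)\}$ that still lies below the previously chosen $a_{n-1}$. Majorization guarantees that $f(n)$ itself is always a legal candidate, which keeps the search sets nonempty and forces $a_n \geq_{\+L} f(n)$ throughout.

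More precisely, I would set $S_n = \{0,1,\dots,g(n)\}$ and define $a_0$ to be the $\+L$-maximum of $S_0$. Inductively, given $a_n$, let $a_{n+1}$ be the $\+L$-maximum of $T_{n+1} := \{x \in S_{n+1} : x <_{\+L} a_n\}$. I would maintain the invariant $a_n \geq_{\+L} f(n)$: for $n=0$ this is immediate from $f(0) \in S_0$ (since $f(0) \leq g(0)$) together with $a_0$ being the $\+L$-maximum of $S_0$. For the inductive step, $f(n+1) \leq g(n+1)$ gives $f(n+1) \in S_{n+1}$, and $f(n+1) <_{\+L} f(n) \leq_{\+L} a_n$ gives $f(n+1) \in T_{n+1}$, so $T_{n+1}$ is nonempty and $a_{n+1} \geq_{\+L} f(n+1)$. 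By construction $a_{n+1} <_{\+L} a_n$, so the sequence is $\+L$-descending.

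For the uniformity claim, note that $S_n$ is a finite set computable from $g$, and the $\+L$-order on $S_n$ is computable from an index for $\+L$; thus the $\+L$-maximum of $T_{n+1}$ can be computed uniformly in $g$ and the index for $\+L$. There is no real obstacle here: the only thing to verify is that the ``candidate'' $f(n)$ is never excluded from $T_n$, which follows immediately from the two hypotheses (majorization in $\omega$ and descent in $\+L$) working in tandem. Hence $g$ uniformly computes the descending sequence $(a_n)_{n \in \omega}$ in $\+L$, completing the proof.
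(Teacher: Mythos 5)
Your proof is correct and follows essentially the same argument as the paper: greedily take the $\+L$-rightmost element among the first $g(n)$-many natural numbers lying $\+L$-below the previous choice, with the invariant $f(n) \leq_{\+L} a_n$ guaranteeing the candidate sets stay nonempty. No issues.
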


\begin{proof}
Let $a_0$ be the $\+L$-rightmost of the first $g(0)$ elements of $\+L$, and, given $a_n$, define $a_{n+1}$ to be the $\+L$-rightmost of the first $g(n+1)$ elements of $\+L$ which are to the left of $a_n$.  Inductively, $f(n) \le_{\+L} a_n$, and so a candidate for $a_{n+1}$ exists; in particular, $f(n+1)$ is such an element.
\end{proof}

\begin{cor}\label{cor:separating_uniformly_high_desc_from_high_iso}
There is a degree which is uniformly high for descending sequences but not high for isomorphism.
\end{cor}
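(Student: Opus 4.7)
The plan is to invoke Theorem~\ref{thm:jump_structures_not_fast_growing_property} at a carefully chosen $f$ and then read off the two desired properties using Lemma~\ref{lem:uniformlyhighdescendingandbounding} and Proposition~\ref{prop:high_for_iso_vs_paths}. Fix a Harrison order $\+H$ which admits a jump structure (as discussed at the beginning of Section~\ref{sec:jumps}, such orders exist) together with a $\Pi^0_1$ class $\+P$ projecting onto the jump structures on $\+H$; since $\+H$ admits a jump structure, $\+P$ is nonempty. With $(U_n)_{n \in \omega}$ an effective listing of $\Sigma^1_1$ subsets of $\omega$ as in Lemma~\ref{lem:uniformlyhighdescendingandbounding}, define $f: \omega \to \omega$ by $f(n) = \min U_n$ when $U_n \ne \emptyset$ and $f(n) = 0$ otherwise.

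Theorem~\ref{thm:jump_structures_not_fast_growing_property} supplies a $g \in \omega^\omega$ which dominates $f$ but computes no element of $\+P$; let $\*d$ be the Turing degree of $g$. Because $g$ dominates $f$, the set of $n$ with $g(n) < f(n)$ is finite, so a finite modification of $g$ yields a $g$-computable total function $h$ satisfying $h(n) \ge \min U_n$ whenever $U_n \ne \emptyset$. By Lemma~\ref{lem:uniformlyhighdescendingandbounding}, $\*d$ is uniformly high for descending sequences.

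Now suppose for contradiction that $\*d$ were high for isomorphism. Then by Proposition~\ref{prop:high_for_iso_vs_paths}, $\*d$ would be high for paths and so would compute an element of every nonempty $\Pi^0_1$ class of functions; applied to $\+P$, this contradicts the choice of $g$. Hence $\*d$ is uniformly high for descending sequences but not high for isomorphism. The real work of this proof lies entirely in Theorem~\ref{thm:jump_structures_not_fast_growing_property}, whose proof combines the Laver-tracing construction of Lemma~\ref{lem:tracing} with the absence of $\Pi^1_1$ traces for jump structures established in Proposition~\ref{thm:jump_structures_untraced}; once those are in hand, the argument above is just packaging.
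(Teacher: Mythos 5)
Your proof is correct and is essentially the paper's argument: both rest entirely on Theorem~\ref{thm:jump_structures_not_fast_growing_property} applied to a suitably fast-growing $f$, and both conclude non-highness for isomorphism because the resulting $g$ computes no element of the nonempty $\Pi^0_1$ class $\+P$ (via Proposition~\ref{prop:high_for_iso_vs_paths}). The only difference is cosmetic: you obtain uniform highness for descending sequences through the $\Sigma^1_1$-bounding characterization of Lemma~\ref{lem:uniformlyhighdescendingandbounding}, whereas the paper takes $f$ with descending sequences on its columns and invokes Lemma~\ref{lem:descending_sequences_fast_growing_property}; both packagings are valid.
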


\begin{proof}
Let $f$ be a total function such that for every $e$ such that $\+L_e$ is ill founded, $f^{[e]}$ is a descending sequence.  By Theorem~\ref{thm:jump_structures_not_fast_growing_property}, there is a $g$ majorizing $f$ which is not high for paths, and by Lemma~\ref{lem:descending_sequences_fast_growing_property}, this $g$ is uniformly high for descending sequences.
\end{proof}

We will strengthen this result in Theorem~\ref{thm:separating_uniformly_high_desc_improved}. 

Though highness, even uniform highness, for descending sequences does not suffice to imply highness for isomorphism, the addition of a different property provides the missing strength, although this property itself seems rather weak (see Proposition~\ref{scLFO1CK}).

\begin{thm}\label{prop:high_for_descending_and_Scott_complete_implies_high_for_isomorphism} A degree $\mathbf{d}$ is high for isomorphism if and only if it is both high for descending sequences and Scott complete.
\end{thm}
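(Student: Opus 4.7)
The plan is to use Proposition~\ref{prop:high_for_iso_vs_paths}, which equates highness for isomorphism with highness for paths, for the easy direction, and to feed a descending sequence through a Harrison order together with the $\mathbf{d}$-computable Scott tables into a back-and-forth construction for the harder direction.

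For the forward direction, suppose $\mathbf{d}$ is high for isomorphism and hence high for paths. The descending sequences through any computable ill-founded linear order form a nonempty $\Pi^0_1$ class, so $\mathbf{d}$ computes one and is high for descending sequences. For Scott completeness, the argument of Proposition~\ref{scLFO1CK} exhibits the class of selectors assigning to each pair of indices $(e,i)$ a function $\mathbf{b}$ satisfying Definition~\ref{ScComplete} as a nonempty $\Sigma^1_1$ class; since highness for paths is equivalent to computing members of nonempty $\Sigma^1_1$ classes, $\mathbf{d}$ computes such a selector and is Scott complete.

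For the reverse direction, assume $\mathbf{d}$ is Scott complete and high for descending sequences, and fix computable $\+M \cong \+N$. I first set up a combined computable structure $\sA$ containing both $\+M$ and $\+N$ (for instance the disjoint union in the common signature, arranged so that any isomorphism $\+M \to \+N$ induces an automorphism of $\sA$ swapping the two copies) and fix a computable Harrison order $\sH$. Scott completeness then gives a $\mathbf{d}$-computable back-and-forth table $\leq_x$ for $x \in \sH$ on tuples of $\sA$, and highness for descending sequences applied to $\sH$ gives a $\mathbf{d}$-computable descending sequence $h_0 >_\sH h_1 >_\sH h_2 >_\sH \dotsb$. I then build $\pi : \+M \to \+N$ by a $\mathbf{d}$-computable back-and-forth along the $h_n$: inductively maintain a partial bijection $\pi_n$ with domain tuple $\bar{a}_n \in \+M^{<\omega}$ and range tuple $\bar{b}_n \in \+N^{<\omega}$ satisfying both $\bar{a}_n \leq_{h_n} \bar{b}_n$ and $\bar{b}_n \leq_{h_n} \bar{a}_n$ (the base case $\pi_0 = \emptyset$ is immediate from reflexivity), extend by the next required element on one side by invoking the defining clause for $\leq_{h_n}$ (which guarantees a matching extension at level $h_{n+1} <_\sH h_n$), and locate a witness on the correct side by a $\mathbf{d}$-computable search. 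The union $\pi = \bigcup_n \pi_n$ is the required isomorphism.

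The main obstacle will be verifying the extension step: the defining clause for $\leq_{h_n}$ in Definition~\ref{ScComplete} only guarantees a witness somewhere in $\sA$, not necessarily on the $\+N$ side. I would argue that such a witness can always be found on the correct side by using the hypothesis $\+M \cong \+N$ to produce an automorphism of $\sA$ swapping the two copies, together with the automorphism-invariance of the Scott relations, so that the $\mathbf{d}$-computable search through $\+N$ terminates. This step is the technical heart of the argument and is what makes Scott completeness combined with highness for descending sequences strictly stronger than either notion alone.
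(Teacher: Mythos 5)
Your forward direction is fine and matches the paper. The reverse direction, however, has a genuine gap, and it is not the one you flag. Definition~\ref{ScComplete} pins the table $(\leq_x)$ down only on the well-founded part of $\sH$: there it must agree with the standard back-and-forth relations, but on the ill-founded part the clauses are only a (non-well-founded) fixed-point condition, and degenerate solutions exist. For instance, taking the standard relations on $W(\sH)$ and the \emph{empty} relation at every ill-founded $x$ satisfies both clauses (at ill-founded $x$ there is always an ill-founded $y<x$, so both sides of the biconditional are false), and the paper's remark after Proposition~\ref{scLFO1CK} notes the ``equality thereafter'' solution. Since every infinite descending sequence $h_0>_\sH h_1>_\sH\cdots$ lies entirely in the ill-founded part, your construction only ever consults the table at levels where it may be trivial: your base case ``$\emptyset\leq_{h_0}\emptyset$ by reflexivity'' is not guaranteed (reflexivity at ill-founded levels is not forced by the definition), and the inductive invariant $\bar a_n\leq_{h_n}\bar b_n$ can simply be unsatisfiable. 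Your proposed fix for the witness-side problem is also unjustified: the $\mathbf{d}$-computable table is merely \emph{some} solution of the clauses and need not be automorphism-invariant off the well-founded part (one can build solutions at ill-founded levels that distinguish tuples by data not preserved by automorphisms), so you cannot transport a witness across the swap.

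The paper's proof supplies exactly the two ingredients you are missing. First, instead of a bare disjoint union it forms $\sA\odot\sB$ with two new points $a,b$ and a relation $S$ tying $a$ to the $\sA$-side and $b$ to the $\sB$-side, and it seeds the back-and-forth with the pair $(a,b)$; the side of each witness is then forced by atomic ($\Sigma_1$) preservation built into the bottom clause of the definition, with no appeal to invariance. Second, and crucially, it chooses the Harrison order $\sH$ so that $W(\sH)\equiv_T\mathcal{O}$ and runs a dichotomy on the behaviour of the $\mathbf{d}$-computable table at ill-founded levels: if $a\cong_x b$ for some ill-founded $x$, restrict to $\sH\upharpoonright_x$, take a $\mathbf{d}$-computable descending sequence there, and run the back-and-forth as you intend; if instead $a\not\cong_x b$ for every ill-founded $x$, then $\{x: a\cong_x b\}=W(\sH)$ is $\mathbf{d}$-computable, so $\mathbf{d}\geq_T\mathcal{O}$ and $\mathbf{d}$ is high for isomorphism outright. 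Without some analogue of this case split (and the special choice of $\sH$ that makes the second case harmless), the degenerate-table scenario defeats your argument, so the proposal as written does not go through.
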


\begin{proof}
Given isomorphic computable structures $\sA$ and $\sB$, we form a structure $\sA \odot \sB$ by expanding the cardinal sum of $\sA$ and $\sB$ with two new constant symbols $a,b$, adding new elements to instantiate them, and adding a new relation $S$ such that $aSx$ for exactly those $x \in \sA$ and $bSx$ for exactly those $x \in \sB$.  We take a Harrison order $\sH$ whose well-founded part $W(\+H)$ is Turing equivalent to $\sO$, a standard operation described in \cite{Fr76, GHKS04}, and let $(\leq_x : x \in \sH)$ be the Scott analysis computed by $\mathbf{d}$.

We now consider the points $a,b$.  As $\+A \cong \+B$, $a \cong_\alpha b$ for all ordinals $\alpha$, and thus $a \cong_x b$ for every $x \in W(\+H)$.  If for all $x \in \+H\setminus W(\+H)$ we have $a \ncong_x b$, then $\*d$ easily computes $W(\+H)$ and thus computes $\sO$, making $\*d$ high for isomorphism.

Thus we can assume that there is some $x \in \sH\setminus W(\+H)$ such that $a \cong_x b$ and take $\sH' = \sH\upharpoonright_x$.  Since $\mathbf{d}$ is high for descending sequences, we find a $\mathbf{d}$-computable descending sequence $(s_i : i \in \mathbb{N}) \subseteq \sH'$.  We will use $(\leq_{s_i} : i \in \mathbb{N})$ to compute an isomorphism $f:\sA \to \sB$.  We construct a sequence $(\tilde{f}_t : t \in \mathbb{N})$ of finite partial isomorphisms; for notational convenience, we will also construct $\delta_t$ a tuple of the elements of $\dom(f_t)$ and $\gamma_t$ a tuple of the elements of $\text{range}(f_t)$.

We let $\tilde{f}_{0} = \{(a,b)\}$, denoting $\delta_{0} = (a)$ and $\gamma_{0} = (b)$.  Now, at stage $2t$, we take the first $c \in \sA \setminus \dom(\tilde{f}_{2t})$ and find (using oracle $\mathbf{d}$) some $d$ such that $\delta_{2t} \cat c \leq_{s} \gamma_{2t} \cat d$ for $s = s_{2t}$.  We set $\tilde{f}_{2t+1} = \tilde{f}_{2t} \cup \{(c,d)\}$ and make $\delta_{2t+1} = \delta_{2t} \cat c$ and $\gamma_{2t+1} = \gamma_{2t} \cat d$.  We act symmetrically at stage $2t+1$ so that both $\sA$ and $\sB$ are exhausted.  We now define $\tilde{f} = \bigcup\limits_{t \in \mathbb{N}} \tilde{f}_t$ and $f = \tilde{f} \setminus \{(a,b)\}$.  Finally, we can see that $f:\sA \to \sB$ is an isomorphism.

The converse follows from the discussion at the conclusion of Section \ref{sec:scott} and at the beginning of the present section.
\end{proof}

\begin{cor}
There is a degree which is uniformly high for descending sequences but not Scott complete, and conversely.\end{cor}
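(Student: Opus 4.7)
The plan is to derive both directions of the corollary directly from Theorem~\ref{prop:high_for_descending_and_Scott_complete_implies_high_for_isomorphism} together with two witnesses already constructed earlier in the paper, so essentially no new machinery is needed.

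For the first half (uniformly high for descending sequences but not Scott complete), I would take $\*d$ to be the degree produced by Corollary~\ref{cor:separating_uniformly_high_desc_from_high_iso}, which is uniformly high for descending sequences but not high for isomorphism. Since uniform highness implies highness, $\*d$ is high for descending sequences, so if it were also Scott complete, Theorem~\ref{prop:high_for_descending_and_Scott_complete_implies_high_for_isomorphism} would force $\*d$ to be high for isomorphism, contradicting the choice of $\*d$. Hence $\*d$ is uniformly high for descending sequences but not Scott complete.

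For the converse (Scott complete but not uniformly high for descending sequences), I would start from the Scott complete degree $\*d$ that is low for $\omega_1^{ck}$ supplied by Proposition~\ref{scLFO1CK}. The step requiring a short verification is that such a $\*d$ cannot be high for isomorphism: if it were, Proposition~\ref{prop:jumps_of_high_are_powerful} would give $\+O \le_T \*d^{(3)}$, and since $\omega_1^{X} = \omega_1^{X^{(n)}}$ for every finite $n$, this would yield $\omega_1^{\*d} \ge \omega_1^{\+O} > \omega_1^{ck}$, contradicting lowness for $\omega_1^{ck}$. Applying Theorem~\ref{prop:high_for_descending_and_Scott_complete_implies_high_for_isomorphism} in the contrapositive direction, a Scott complete degree that is not high for isomorphism cannot be high for descending sequences, and so a fortiori is not uniformly high for descending sequences.

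Since each direction boils down to a single application of the characterization theorem to a pre-built example, there is no serious obstacle here. The only mild subtlety is checking that lowness for $\omega_1^{ck}$ precludes highness for isomorphism, and this is a cheap consequence of Proposition~\ref{prop:jumps_of_high_are_powerful} combined with the invariance of $\omega_1^{(\cdot)}$ under finite jumps.
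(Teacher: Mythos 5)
Your proposal is correct and follows essentially the same route as the paper: both halves are obtained by applying Theorem~\ref{prop:high_for_descending_and_Scott_complete_implies_high_for_isomorphism} to the pre-built witnesses from Corollary~\ref{cor:separating_uniformly_high_desc_from_high_iso} and Proposition~\ref{scLFO1CK}. The only difference is that you spell out why lowness for $\omega_1^{ck}$ precludes highness for isomorphism (via Proposition~\ref{prop:jumps_of_high_are_powerful} and the invariance of $\omega_1^{X}$ under finite jumps), a fact the paper simply asserts, and your justification of it is valid.
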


\begin{proof} By Corollary~\ref{cor:separating_uniformly_high_desc_from_high_iso}, uniform highness for descending sequences does not imply highness for isomorphism, and thus cannot imply Scott completeness.

As observed in Section~\ref{sec:scott}, Scott completeness does not imply highness for isomorphism, and thus cannot imply highness for descending sequences.  Alternatively, this follows from the same reasoning employed in Proposition~\ref{scNOTjc}.
\end{proof}

\section{Reticence}\label{sec:reticence}

It is standard in computability to distinguish between positive information (e.g., an enumeration) and complete information.  A structural analogue of this distinction is what we call ``reticence."  By this we mean that a degree that is, for instance, uniformly high for isomorphism "in the reticent sense" should not only compute isomorphisms where they exist but refrain from computing things that seem to be isomorphisms when no isomorphism exists. We present a formal definition here.

\begin{dfn}
Let $\*d$ be a Turing degree which is uniformly high for a given notion $\+N$.  That is, there is some $I \subseteq \omega$ (depending on $\+N$) and a $\*d$-computable function $F$ such that if $i \in I$, then $F(i) \in \omega^\omega$ has some $\+N$-appropriate relation to $i$, (e.g., uniformly high for paths, where $I$ is the set of indices for nonempty $\Pi^0_1$ classes and $F(i) \in \+P_i$).

We say that $\*d$ is {\em uniformly high for $\+N$ in the reticent sense} if $F$ can be chosen such that for every $i \not \in I$, the partial function $F(i)$ diverges for all inputs.
\end{dfn}

Of course, being uniformly high for some notion in the reticent sense implies being uniformly high in the normal sense.  Possibly the most interesting feature of reticence from our perspective is that our various highness properties, apparently distinct without reticence, collapse when this further requirement is introduced.

\begin{thm}\label{thm:reticence_all_the_same}
The following properties of a degree $\mathbf{d}$ are equivalent:
\begin{enumerate}
    \item $\mathbf{d}$ enumerates all $\Sigma^1_1$ sets.
    \item $\mathbf{d}$ is uniformly high for paths in the reticent sense.
    \item $\mathbf{d}$ is uniformly high for isomorphism in the reticent sense.
    \item $\mathbf{d}$ is uniformly high for descending sequences in the reticent sense.
    \item $\mathbf{d}$ is uniformly high for tight descending sequences.
    \item $\mathbf{d}$ is uniformly high for tight descending sequences in the reticent sense.
\end{enumerate}
\end{thm}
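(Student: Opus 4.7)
The plan is to close the cycle $(1) \Rightarrow (2) \Rightarrow (3), (4), (6) \Rightarrow (1)$, note that $(6) \Rightarrow (5)$ is immediate from the definitions, and prove the remaining implication $(5) \Rightarrow (1)$ via a padding construction. The main obstacle will be this last step, since the non-reticence in $(5)$ means that $F(e)$ may be arbitrary on well-founded $\+L_e$; the trick will be to transform every $\+L_e$ into an always-ill-founded order while planting a computable marker whose position encodes the original ill-foundedness question.

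For $(1) \Rightarrow (2)$, observe that the set of nodes extendible to an infinite path through $T_i$ is $\Sigma^1_1$ uniformly in $i$, hence $\*d$-c.e. A reticent path is built greedily: at stage $n+1$ search for $k$ with $F(i)(0) \cdots F(i)(n) \cat k$ enumerated as extendible, yielding a path when $\+P_i$ is nonempty and diverging at argument $0$ when the set is empty. The implications $(1) \Rightarrow (3), (4), (6)$ follow identically, using the trees of partial isomorphisms, the tree of descending sequences in $\+L_e$, and the $\Sigma^1_1$ set $I(\+L_e)$ respectively; for $(6)$, the tight descending sequence is built by picking $c_s \in I(\+L_e)$ strictly below $c_{s-1}$ and below the first $s$ enumerated elements of $I(\+L_e)$, which is possible because $I(\+L_e)$ has no least element when it is nonempty.

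For the converses $(2), (3), (4), (6) \Rightarrow (1)$ I combine reticence with the standard uniform reducibilities from $\Sigma^1_1$ sets to each of the structural problems: given $X \in \Sigma^1_1$, uniformly construct for each $n$ a computable object (a $\Pi^0_1$ class, an isomorphic pair of structures, a linear order) whose defining property holds iff $n \in X$. By reticence, $n \in X$ iff the corresponding $F$ converges at argument $0$, a $\Sigma^0_1(\*d)$ condition enumerating $X$.

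The heart of the argument is $(5) \Rightarrow (1)$. Given a computable $\+L_e$, define $\+L_e^* := \+L_e + \{x\} + \omega^*$, placing $\omega^*$ (a descending chain with maximum) on top. Because $\omega^*$ is ill-founded, $\+L_e^*$ is always ill-founded, so $F$ applied to the index of $\+L_e^*$ produces a genuine tight descending sequence $(c_i)$. The marker $x$ lies in $I(\+L_e^*)$ iff $\+L_e$ is ill-founded, since $\{z \leq_{\+L_e^*} x\} = \+L_e \cup \{x\}$. When $\+L_e$ is well-founded, $I(\+L_e^*) = \omega^*$, so $(c_i)$ lies entirely in $\omega^*$ and never descends below $x$; when $\+L_e$ is ill-founded, tightness forces some $c_i <_{\+L_e^*} x$, necessarily an element of $\+L_e$. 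Thus $\+L_e$ is ill-founded iff $\exists i\, (c_i <_{\+L_e^*} x)$, a $\Sigma^0_1(\*d)$ condition, and since $\{e : \+L_e \text{ is ill-founded}\}$ is $\Sigma^1_1$-complete, this shows that $\*d$ enumerates every $\Sigma^1_1$ set.
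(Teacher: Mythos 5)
Your proposal is correct and takes essentially the same approach as the paper: greedy constructions from a $\*d$-enumeration of the relevant $\Sigma^1_1$ sets (extendible nodes, the ill-founded part) for the forward implications, $\Sigma^1_1$-completeness plus reticence for the reversals, and for $(5)\Rightarrow(1)$ the same padding trick, where the paper appends a Harrison order to $\+L_e$ while you append $\omega^*$ with a marker --- either always-ill-founded tail works, since tightness forces the sequence below the marker exactly when $\+L_e$ is ill-founded. The only other difference is organizational (you route all implications through $(1)$ rather than the paper's mixed cycle), which does not affect the substance.
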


\begin{proof}
The proof is organized as follows:
\begin{center}
\begin{tikzcd}
& (1) \arrow[r] \arrow[d] & (2) \arrow[r]\arrow[d] & (3) \arrow[l,bend right] \\
(5)\arrow[ur] & (6) \arrow[l] & (4) \arrow[ul] & 
\end{tikzcd}
\end{center}

(1 $\Rightarrow$ 2)  Let $\+P_e$ be an effective listing of $\Pi^0_1$ classes in Baire space.  By assumption, $\*d$ enumerates $X = \{ (\sigma, e) : [\sigma]\cap \+P_e \neq \emptyset\}$.

Given $e$, we search for a string $\sigma_0$ with $(\sigma_0, e) \in X$.  Having found such a $\sigma_0$, we define $F(e)\uhr{|\sigma_0|} = \sigma_0$.  We then search for a string $\sigma_1 \in X$ strictly extending $\sigma_0$, and continue in this fashion.

If $\+P_e$ is empty, then there is no such $\sigma_0$, and so we will leave $F(e)$ everywhere divergent.  On the other hand, if $\+P_e$ is nonempty, then we will find our sequence of $\sigma_i$s and use them to define $F(e) \in \bigcap_i [\sigma_i] \subseteq \+P_e$.

Thus $f$ witnesses that $\*d$ is uniformly high for paths in the reticent sense.

\smallskip

($2\Rightarrow 3)$ Given computable structures $\+M_i$ and $\+M_j$, $\{ (f, f^{-1}) : \+M_i \cong_f \+M_j\}$ is a $\Pi^0_1$ class.

\smallskip

($3\Rightarrow 2$) In the proof of Proposition~\ref{prop:high_for_iso_vs_paths}, we gave a uniform procedure for transforming a $\Pi^0_1$ class into a pair of computable structures such that the structures are isomorphic if and only if the class is nonempty, and every isomorphism computes an element of the class.  The result follows.

\smallskip

($2\Rightarrow 4$) The infinite descending sequences through a computable linear order form a $\Pi^0_1$ class, and an index for the $\Pi^0_1$ class can be effectively obtained from an index for the linear order.

\smallskip

($1\Rightarrow 6$) Let $\+L_e$ be an effective listing of computable linear orders.  By assumption, $\*d$ enumerates $X = \{(a, e) : \text{$a$ is in the ill-founded part of $\+L_e$}\}$.  We fix such an enumeration $(X_s)_{s \in \omega}$ and assume $X_s$ is finite for every $s$.  Given $e$, we search for $a_0 \in \+L_e$ with $(a_0, e) \in X$; if we find such an $a_0$ at stage $s$, we make sure to choose it such that $a_0$ is the leftmost element of $\+L_e$ with $(a_0, e) \in X_s$.  We then search for an $a_1 \in \+L_e$ with $(a_1, e) \in X$ and $a_1 <_{\+L_e} a_0$; again, we choose $a_1$ to be the leftmost eligible element at the stage we find it.  We continue in this fashion and define $F(e) = \seq{a_0, a_1, \dots}$

Clearly if $\+L_e$ is well founded, then we never find $a_0$, and so $F(e)$ diverges for all inputs.  Suppose now that $b$ is an element of the ill-founded part of $\+L_e$.  There there is some $s$ with $(b, e) \in X_s$, and so any $a_n$ chosen on or after stage $s$ will be $\le_{\+L_e} b$ by construction.
\smallskip

($6\Rightarrow 5$)  This is immediate.

\smallskip

($4 \Rightarrow 1$)  Given a $\Sigma^1_1$ set $Y$,  there is a computable sequence of linear orders $(\+L_{e_n})_{n \in \omega}$ such that $\+L_{e_n}$ is ill founded if and only if $n \in Y$.  Therefore, $n \in Y$ if and only if $F(e_n)$ produces any output, which gives a $\*d$-computable enumeration.

\smallskip

($5\Rightarrow 1$)  Fix a $\Sigma^1_1$ set $Y$.  Then there is a computable sequence of linear orders $(\+L_{e_n})_{n \in \omega}$ such that $\+L_{e_n}$ is ill founded if and only if $n \in Y$.  Now fix a Harrison order $H$ and, for each $n$, define $i_n$ so that $\+L_{i_n} = \+L_{e_n} + H$.  Then $\+L_{i_n}$ will be ill founded and $F(i_n)$ will produce a tight descending sequence through it. Then $n \in Y$ if and only if $F(i_n)$ enumerates an element of the $\+L_{e_n}$ part of $\+L_{i_n}$, which gives a $\*d$-computable enumeration.
\end{proof}

The striking thing about this result is the light it sheds on the known separations among the ``nonreticent" versions of these properties.  The separation for any two of these properties must somehow be driven by how the degrees in question handle the ``dark matter" of, e.g., the nonisomorphic structures or non-ill-founded orders.  To the present authors, at least, this feature was not obvious from our initial proofs of separation.

Theorem~\ref{thm:reticence_all_the_same} and Corollary~\ref{cor:separating_uniformly_high_desc_from_high_iso} immediately separate uniform highness for descending sequences from the reticent notion.  We can also separate uniform highness for isomorphism from the reticent notions.

\begin{thm}\label{thm:separating_uniformly_high_desc_improved}
There is a $\*d$ with $\*d' \equiv_T \+O$ that is uniformly high for descending sequences but not high for isomorphism, and such that there is no $\*b$ that is uniformly high for descending sequences in the reticent sense such that $\*d \le_T\*b$ and  $\+O \not \le_T \*b$.
\end{thm}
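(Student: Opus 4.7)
The plan is to construct $\*d$ via an $\+O$-computable forcing that simultaneously achieves four requirements: (i) $\*d$ is uniformly high for descending sequences; (ii) $\*d$ is not high for isomorphism; (iii) $\*d' \equiv_T \+O$; and (iv) every $\*b \ge_T \*d$ that is uniformly high for descending sequences in the reticent sense computes $\+O$. For (i) and (ii), I would follow the template of Corollary~\ref{cor:separating_uniformly_high_desc_from_high_iso}: conditions are chosen so the generic $\*d$ majorizes a total function $f$ whose $e$-th column encodes a descending sequence through $\+L_e$ when ill-founded (giving (i) via Lemma~\ref{lem:descending_sequences_fast_growing_property}) while avoiding all elements of the $\Pi^0_1$ class $\+P$ of jump structures on a fixed Harrison order admitting one (giving (ii) via Theorem~\ref{thm:jump_structures_not_fast_growing_property} and the Laver-tracing Lemma~\ref{lem:tracing}).

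For (iii), the forcing would be run $\+O$-effectively in low-style fashion, predicting generic $\Sigma^0_1$-behavior in advance via $\+O$ and yielding $\*d' \le_T \+O$; for the reverse direction, I would embed coding sites within the construction so that $\*d'$ decodes $\+O$ by observing which $\*d$-partial computations converge, while arranging the coding so $\*d$ by itself cannot recover $\+O$.

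For (iv), the key observation is that a reticent $\*b$ provides a $\*b$-computable $G$ with $G(e)(0)\converge$ if and only if $\+L_e$ is ill-founded. Then $\*b$ computes $\+O$ provided it has a timeout function $h$ that upper-bounds the convergence stage of $G(e)(0)$ whenever $\+L_e$ is ill-founded, because then $n \in \+O$ iff $G(n)(0)$ has not converged by stage $h(n)$. I would arrange the forcing so that, for each functional $\Phi$, either $\Phi^X$ is forced to fail reticence for every $X \ge_T \*d$ extending our condition, or else a $\*d$-computable timeout $h_\Phi$ exists that works uniformly for every reticent $\Phi^{\*b}$ with $\*b \ge_T \*d$. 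These coding requirements would be interleaved with the requirements of (i)--(iii) in the usual fashion.

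The main obstacle will be coordinating (iv) with the avoidance requirement (ii). The timeout provision pushes $\*d$ to dominate computation-stage functions of reticent $\*b$'s, while avoidance of $\+P$ constrains how fast $\*d$ may grow. I expect to navigate this using the Laver-tracing argument of Lemma~\ref{lem:tracing}: the tracing machinery shows that avoidance only constrains the generic majorization modulo $\Pi^1_1$-traces, and the coding bounds for (iv) are arithmetic in the functional parameter rather than requiring unbounded growth against $f$, so enough slack should remain to meet both sets of requirements. The delicate technical step will be verifying, for each forcing condition and each pair of pending requirements, that the set of joint extensions is dense; this is where the interaction between the tracing ranks, the coding timeouts, and the $\+O$-effectivity must be orchestrated carefully.
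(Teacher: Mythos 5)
Your plan for requirements (i) and (ii) matches the paper's construction (majorization forcing combined with Lemma~\ref{lem:descending_sequences_fast_growing_property}, Theorem~\ref{thm:jump_structures_not_fast_growing_property} and the Laver-tracing machinery), but your strategy for (iv) has a genuine gap. You propose to handle (iv) requirement-by-requirement: for each functional $\Phi$, either force that ``$\Phi^X$ fails reticence for every $X \ge_T \*d$ extending our condition,'' or force the existence of a computable timeout bounding the convergence stages of the reticent selector. Neither disjunct is something the forcing can deliver. The degrees $\*b \ge_T \*d$ are of the form $\deg(G \oplus Z)$ for arbitrary $Z$, so a reticent witness is $\Phi^{G\oplus Z}$ for an arbitrary real $Z$; conditions on $G$ alone cannot decide the behaviour of $\Phi$ on all such oracles, and there are continuum many of them, so this is not a countable list of requirements that density arguments can meet. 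The timeout alternative is worse: a bound on the convergence stages of $F(e)(0)$ over the ill-founded indices $e$ is essentially equivalent to the conclusion ``$\*b$ computes $\+O$'' that you are trying to prove, and there is no reason such a bound should be computable from $\*d$ (indeed $\*d$ must not compute $\+O$, since $\+O$ is high for isomorphism). The tension you flag between (iv) and the avoidance requirement (ii) is a symptom of this: you cannot have $\*d$ dominate stage functions of arbitrary degrees above it.

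The correct move is global, not diagonal, and it is already latent in your requirement (i): because every column $G^{[e]}$ majorizes a member of the $e$-th $\Pi^0_1$ class whenever that class is nonempty (in your variant, a descending sequence of $\+L_e$ whenever $\+L_e$ is ill founded), the class $\+P_e \cap \{g : g \le G^{[e]}\}$ is effectively compact relative to $G$, so its emptiness --- equivalently the emptiness of $\+P_e$ (resp.\ well-foundedness of $\+L_e$) --- is c.e.\ in $G$. Hence $\+O$ is c.e.\ in $\*d$. This gives $\+O \le_T \*d'$ for free (so your ad hoc ``coding sites'' for (iii) are unnecessary; jump control alone yields $\*d' \equiv_T \+O$), and it gives (iv) by a soft argument: if $\*b \ge_T \*d$ is uniformly high for descending sequences in the reticent sense, then by Theorem~\ref{thm:reticence_all_the_same} $\*b$ enumerates every $\Sigma^1_1$ set, in particular $\omega \setminus \+O$, while $\*b$ also enumerates $\+O$ because $\*d$ does; enumerating a set and its complement, $\*b$ computes $\+O$. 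No per-functional forcing against the $\*b$'s is needed, and the only delicate interleaving that remains is the one you already have between majorization, jump forcing, and avoidance of the jump-structure class.
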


\begin{proof}
Fix $(\+P_e)_{e \in \omega}$ an effective listing of all $\Pi^0_1$ classes in Baire space.  Now define $c(0) = -1$ and, for $n>0$, define $c(n)$ to be the largest $x$ such that $\seq{x,y} < n$ for some $y$ where $\seq{\cdot, \cdot}$ is the standard pairing function.  As before, we write $J$ for the universal jump functional.  We define a forcing notion and then carefully build a sequence through the notion.

Our conditions will be triples $(\tau, f, n)\in \omega^{<\omega}\times \omega^\omega \times \omega$ satisfying:
\begin{itemize}
\item $\tau$ majorizes $f$;
\item $c(|\tau|) < n$;
\item $f$ is strongly hyperlow; and
\item For $e < n$, if $\+P_e$ is nonempty, then $f^{[e]}$ majorizes an element of $\+P_e$.
\end{itemize}
Condition $(\sigma, g, m)$ extends condition $(\tau, f, n)$ if:
\begin{itemize}
\item $\sigma \supseteq \tau$;
\item $g$ majorizes $f$;
\item $m \ge n$; and
\item For all $x < |\tau|$, $g(x) = f(x)$.
\end{itemize}
If $(\sigma, g, m)$ extends $(\tau, f, n)$ and, in addition, $g^{[e]} = f^{[e]}$ for all $e < r$, then we say that $(\sigma, g, m)$ {\em $r$-extends} $(\tau, f, n)$.  Thus $0$-extension is simply extension.

We build an $\+O$-computable sequence of conditions $(\tau_0, f_0, n_0), (\tau_1, f_1, n_1), \dots$, each extending the previous.  Our degree $\*d$ will be the degree of $G = \bigcup_s \tau_s$.  We begin by setting $\tau_0$ to be the empty string, $f_0$ to be the constant $0$ function, and $n_0 = 0$.

\smallskip

{\em At stage $3s$.} Suppose we have constructed $(\tau_{3s}, f_{3s}, n_{3s})$.  Let $e = n_{3s}$.  An oracle for $\+O$ can determine whether $\+P_{e}$ is empty.  If so, we define $f_{3s+1} = f_{3s}$.  If not, we fix some $h \in \+P_{e}$ such that $f_{3s} \oplus h$ is strongly hyperlow; we define $f_{3s+1}^{[e]}(x) = \max\{ f_{3s}^{[e]}(x), h(x)\}$ and $f_{3s+1}(x) = f_{3s}(x)$ elsewhere.  In either case, we let $\tau_{3s+1} = \tau_{3s}$ and $n_{3s+1} = e+1$. We have now forced that $G^{[e]}$ dominates an element of $\+P_e$ if the latter is nonempty.  Note that our new condition $e$-extends the previous condition.

\smallskip

{\em At stage $3s+1$.} Suppose we have constructed $(\tau_{3s+1}, f_{3s+1}, n_{3s+1})$.  Let $e = n_{3s+1}$ and consider the set of $g$ satisfying the following:
\begin{itemize}
\item $g$ majorizes $f_{3s+1}$;
\item For all $e < n_{3s+1}$, $g^{[e]} = f_{3s+1}^{[e]}$; and
\item For all $\sigma \supseteq \tau_{3s+1}$ majorizing $g$, $J^\sigma(s)\diverge$.
\end{itemize}

This is a $\Pi^0_1(f_{3s+1})$ class, and so $\+O$ can determine whether it is empty.  If it is nonempty, let $n_{3s+2} = n_{3s+1}$, $\tau_{3s+2} = \tau_{3s+1}$, and $f_{3s+2}$ be some strongly hyperlow element of the class; we have now forced $J^G(s)\diverge$.

If instead the class is empty, we begin building a sequence of extensions: 
\[
(\tau_{3s+1}, f_{3s+1}, n_{3s+1}) = (\rho_0, h_0, m_0), (\rho_1, h_1, m_1), (\rho_2, h_2, m_2), \dots
\]
Each term is constructed from the previous term following the process for stage $3s$, and so each $(\rho_{k+1}, h_{k+1}, m_{k+1})$ $m_k$-extends $(\rho_k, h_k, m_k)$.  After constructing the $k$th term, we ask $\+O$ if there is a $\sigma$ satisfying the following:
\begin{itemize}
\item $\sigma \supseteq \tau_{3s+1}$;
\item $c(|\sigma|) < m_k$;
\item $\sigma$ majorizes $h_k$; and
\item $J^\sigma(s)\converge$.
\end{itemize}
If there is no such $\sigma$ for any $k$, then $H = \lim_k h_k$ is an element of the earlier $\Pi^0_1(f_{3s+1})$ class which we assumed was empty, giving us a contradiction.  Thus eventually we locate such a $k$ and witnessing $\sigma$.  We then define $\tau_{3s+2} = \sigma$, $f_{3s+2} = h_k$ and $n_{3s+2} = m_k$.  We have now forced $J^G(s)\converge$.

\smallskip

{\em At stage $3s+2$.}  Suppose we have constructed $(\tau_{3s+1}, f_{3s+1}, n_{3s+1})$.  Fix $T \subseteq \omega^{<\omega}$ a computable tree such that $[T]$ is the $\Pi^0_1$ class guaranteed by Lemma~\ref{thm:jump_structures_not_fast_growing_property}.  We may assume that for all oracles $X$ and all $t$, if $(\Phi_s^X\uhr{t})\converge$, then $(\Phi_s^X\uhr{t}) \in T$.  We again build a sequence of extensions:
\[
(\tau_{3s+2}, f_{3s+2}, n_{3s+2}) = (\rho_0, h_0, m_0), (\rho_1, h_1, m_1), (\rho_2, h_2, m_2), \dots
\]
Each term is again constructed from the previous term following the process for stage $3s$, and so each $(\rho_{k+1}, h_{k+1}, m_{k+1})$ $m_k$-extends $(\rho_k, h_k, m_k)$.  After constructing the $k$th term, we ask $\+O$ if there is a $g \in \omega^\omega$, a $\sigma \in \omega^{<\omega}$ and a $t \in \omega$ satisfying the following:
\begin{itemize}
\item $g$ majorizes $h_k$;
\item $(\sigma, g, m_k)$ extends $(\tau_{3s+2}, f_{3s+2}, n_{3s+2})$; and
\item There is no $\sigma' \supseteq \sigma$ majorizing $g$ with $(\Phi_s^{\sigma'}\uhr{t})\converge$.
\end{itemize}
Towards a contradiction, we suppose there are never any such $g$, $\sigma$, and $t$ for any $k$.  Let $H = \lim_k h_k$.  Given an oracle $g$ majorizing $H$, we may assume $g$ extends $\tau_{3s+2}$.  Consider the following algorithm:
\begin{itemize}
\item Let $\sigma_0 = \tau_{3s+2}$;
\item Given $\sigma_i$, search for a $\sigma_{i+1} \supseteq \sigma_i$ that majorizes $g$ with $\Phi_s^{\sigma_{i+1}}(i)\converge$.
\end{itemize}
If this algorithm finds a $\sigma_i$ for every $i$, it computes a path through $T$ via $\Phi_s$, but by Lemma~\ref{thm:jump_structures_not_fast_growing_property}, there is some $g$ majorizing $H$ for which it fails.  Fix such a $g$ and let $i$ be least such that the algorithm fails to define $\sigma_{i+1}$.  Furthermore, let $k$ be such that $c(|\sigma_i|) < m_k$.  Then this $g$, $\sigma = \sigma_i$, and $t = i$ are as desired for $k$, contrary to assumption.

Therefore, eventually we locate a $k$ for which $g$ and $\sigma$ exist.  We fix such $g$ and $\sigma$ with $g$ strongly hyperlow.  We set $(\tau_{3s+3}, f_{3s+3}, n_{3s+3}) = (\sigma, g, m_k)$, and we have now forced that $\Phi_s^G \not \in [T]$.

\smallskip

By construction, $G^{[e]}$ majorizes an element of $\+P_e$ whenever the latter is nonempty.  It follows that $\*d$ is uniformly high for descending sequences.  It also follows that $\*d$ enumerates $\+O$: a $\Pi^0_1$ class $\+P_e$ is empty if and only if $\+P_e \cap \{ g : \forall x\, [g(x) \le G^{[e]}(x)]\}$ is empty, and the latter is effectively compact relative to $G$.  We force the jump, so $\*d' \le_T \+O$ and thus $\*d' \equiv_T \+O$.  We also ensure that $\*d$ is not high for isomorphism, as it does not compute an element of $[T]$.

Now suppose $\*b \ge_T \*d$ is uniformly high for descending sequences in the reticent sense.  Then $\*b$ enumerates $\omega\setminus \+O$, and since $\* d$ enumerates $\+O$, $\*b$ does too.  Thus, $\*b$ computes $\+O$.
\end{proof}

\begin{cor}
There is a degree that is uniformly high for isomorphism, but not in the reticent sense.
\end{cor}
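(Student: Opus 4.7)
The plan is to build the desired degree as the join of a degree supplied by Theorem~\ref{thm:separating_uniformly_high_desc_improved} with a suitably chosen PA-over-that-degree set. Let $\*d$ be the degree from Theorem~\ref{thm:separating_uniformly_high_desc_improved}: $\*d$ is uniformly high for descending sequences, $\*d' \equiv_T \+O$, and $\*d$ enumerates $\+O$. In particular $\+O \not\le_T \*d$, since otherwise $\*d' \ge_T \+O'$. The last clause of that theorem also records that no $\*b \ge_T \*d$ which is uniformly high for descending sequences in the reticent sense can avoid computing $\+O$.

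Since $\+O \not\le_T \*d$, the relativized cone avoidance basis theorem \cite{GaKrTa60}, applied to the nonempty $\Pi^0_1(\*d)$ class of $\{0,1\}$-valued completions of $\*d$-PA, yields a degree $\*f$ which is PA over $\*d$ and satisfies $\+O \not\le_T \*d \oplus \*f$. Set $\*e = \*d \oplus \*f$. Then $\*e \ge_T \*d$, so $\*e$ remains uniformly high for descending sequences (the property is plainly closed upwards), and $\*e$ remains PA over $\*d$ because $\*e \ge_T \*f$. Theorem~\ref{prop:pa_over_unif_desc_seq_is_unif_iso} now gives that $\*e$ is uniformly high for isomorphism.

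To conclude that $\*e$ is not uniformly high for isomorphism in the reticent sense, suppose otherwise toward a contradiction. By Theorem~\ref{thm:reticence_all_the_same} the reticent versions of all of these notions coincide, so $\*e$ is also uniformly high for descending sequences in the reticent sense. Since $\*e \ge_T \*d$, the last clause of Theorem~\ref{thm:separating_uniformly_high_desc_improved} then forces $\+O \le_T \*e$, contradicting the choice of $\*f$. Hence $\*e$ is uniformly high for isomorphism but not in the reticent sense.

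The only point requiring care is making sure that both ingredients of the join contribute to the hypotheses of Theorem~\ref{prop:pa_over_unif_desc_seq_is_unif_iso}: uniform highness for descending sequences is carried by $\*d$, while the PA-ness needed to find a path through the $\*d$-pruned tree is supplied by $\*f$, and taking the join allows a single oracle $\*e$ to play both roles at once. No new construction is required; the corollary falls out of stitching together Theorems~\ref{prop:pa_over_unif_desc_seq_is_unif_iso}, \ref{thm:reticence_all_the_same}, and \ref{thm:separating_uniformly_high_desc_improved} with a single application of cone avoidance.
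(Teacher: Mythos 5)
Your proof is correct and takes essentially the same route as the paper: the paper's proof also starts from the degree $\*d$ of Theorem~\ref{thm:separating_uniformly_high_desc_improved} and adjoins a degree PA over $\*d$ that fails to compute $\+O$, the only difference being that the paper secures $\+O \not\le_T \*b$ by choosing $\*b$ low over $\*d$ (so $\*b' \equiv_T \*d' \equiv_T \+O$) rather than by the cone avoidance basis theorem. Both arguments then conclude via Theorem~\ref{prop:pa_over_unif_desc_seq_is_unif_iso} and the collapse of the reticent notions in Theorem~\ref{thm:reticence_all_the_same}, just as you do.
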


\begin{proof}
Let $\*d$ be as in Theorem \ref{thm:separating_uniformly_high_desc_improved}, and take $\*b$ to be both PA over $\*d$ and low over $\*d$.  Then $\*b$ is uniformly high for isomorphism by Theorem~\ref{prop:pa_over_unif_desc_seq_is_unif_iso}, but not in the reticent sense, as $\*b$ is above $\*d$ but not $\+O$.
\end{proof}

\section{Randomness and genericity}\label{sec:rand_gen}

We conclude with some quick remarks on the ability of degrees that are high for isomorphism to compute generic and random sets. 

\begin{fact}
All degrees that are high for isomorphism compute a $\sigmaii$-generic since $\sigmaii$-generics form a $\sigmaii$ class, but the converse may not hold.
\end{fact}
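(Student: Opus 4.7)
The forward direction is essentially sketched in the statement itself. One would first verify that the class $\+G$ of $\Sigma^1_1$-generic reals is a nonempty $\Sigma^1_1$ subclass of $2^\omega$; under the standard formulation of $\Sigma^1_1$-genericity this is routine, and existence follows by the usual diagonal construction using the countability (up to equivalence) of $\Sigma^1_1$ dense sets. Once this is in hand, the forward implication is immediate from Proposition~\ref{prop:high_for_iso_vs_paths} combined with the earlier Observation that (uniform) highness for paths is equivalent to the ability to compute an element of every nonempty $\Sigma^1_1$ class: any degree high for isomorphism is high for paths and so computes some $G \in \+G$.

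For the converse, my plan is to use any $\Sigma^1_1$-generic as its own concrete counterexample. Fix $G \in \+G$; its Turing degree trivially computes the $\Sigma^1_1$-generic $G$ itself, so it suffices to show that this degree is not high for isomorphism. Every $\Sigma^1_1$-generic is in particular $1$-generic, and it is classical that no $1$-generic computes $0'$. Since $0' \in \Delta^1_1$, Proposition~\ref{prop:high_for_iso_above_hyp} then yields that the Turing degree of $G$ cannot be high for isomorphism, completing the separation.

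The only conceptually subtle point is arranging the definition of $\Sigma^1_1$-generic so that the class really is $\Sigma^1_1$ (a naive reading of ``meets every $\Sigma^1_1$ dense set'' produces a $\Pi^1_1$ condition), but the authors already commit to this complexity bound in the statement, so the intended formulation handles this. Everything else is pure invocation of results already available in the paper and in the classical literature.
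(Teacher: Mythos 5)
Your forward direction coincides with the paper's own (very terse) justification: the Fact is proved in the paper only by the inline remark that the $\Sigma^1_1$-generics form a $\Sigma^1_1$ class, so that highness for isomorphism, being equivalent to highness for paths (Proposition~\ref{prop:high_for_iso_vs_paths}) and hence to computing a member of every nonempty $\Sigma^1_1$ class, yields a $\Sigma^1_1$-generic. Where you genuinely go beyond the paper is the converse: the paper merely says the converse ``may not hold'' and gives no argument, whereas you prove outright that it fails, taking a $\Sigma^1_1$-generic $G$ itself as the witness --- $G$ is in particular $1$-generic, no $1$-generic computes $\mathbf{0}'$, and $\mathbf{0}'$ is $\Delta^1_1$, so by Proposition~\ref{prop:high_for_iso_above_hyp} the degree of $G$ cannot be high for isomorphism. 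That chain is correct and in fact sharpens the Fact from ``may not hold'' to ``does not hold.'' The one point on which you and the paper are in the same position is the unproved assertion that the $\Sigma^1_1$-generics form a nonempty $\Sigma^1_1$ class: as you note, the naive ``meets or avoids every $\Sigma^1_1$ set of strings'' formulation has a $\Pi^1_1$ avoidance clause, so the class is not \emph{prima facie} $\Sigma^1_1$, and a fully self-contained treatment would have to verify the complexity claim for the intended definition (nonemptiness, by contrast, is the routine counting-of-indices construction you cite, and is all your converse actually needs). Since the paper itself asserts the complexity bound without proof, this is a shared debt rather than a gap specific to your argument.
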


This contrasts, sharply and unsurprisingly, with the situation for degrees that are low for isomorphism: every Cohen 2-generic is itself low for isomorphism, as is every degree computable from a Cohen 2-generic \cite{fs-lowim}.

\begin{fact}
All degrees that are high for isomorphism compute a $\piii$-random since the class of $\piii$ randoms is $\sigmaii$.
\end{fact}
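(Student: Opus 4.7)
The plan is to appeal directly to Proposition~\ref{prop:high_for_iso_vs_paths} together with the observation in Section~\ref{sec:high} that a degree high for paths computes a member of every nonempty $\Sigma^1_1$ class (equivalently, of every nonempty $\Pi^0_1$ class). It therefore suffices to exhibit the class $\mathcal{R}$ of $\Pi^1_1$-random reals as a nonempty $\Sigma^1_1$ subset of $2^\omega$.

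First I would invoke the existence of a universal $\Pi^1_1$-Martin-L\"of test, due to Hjorth and Nies: there is a uniformly $\Pi^1_1$ sequence of open sets $(U_n)_{n \in \omega}$ with $\mu(U_n) \le 2^{-n}$ such that $X \in \mathcal{R}$ if and only if $X \notin \bigcap_n U_n$. Consequently $2^\omega \setminus \mathcal{R} = \bigcap_n U_n$ is $\Pi^1_1$, so $\mathcal{R}$ is $\Sigma^1_1$. Nonemptiness of $\mathcal{R}$ is then automatic: $\bigcap_n U_n$ is Lebesgue-null, so $\mathcal{R}$ has full Lebesgue measure and is, in particular, inhabited.

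With those two facts in hand, the argument concludes in one line: if $\mathbf{d}$ is high for isomorphism then, by Proposition~\ref{prop:high_for_iso_vs_paths}, $\mathbf{d}$ is high for paths and hence computes a member of the nonempty $\Sigma^1_1$ class $\mathcal{R}$, which is by definition a $\Pi^1_1$-random. The only step requiring external input is the $\Sigma^1_1$ bound on $\mathcal{R}$; this is the one nontrivial ingredient, but it is standard in the randomness literature and is precisely the parenthetical justification the paper already offers in the statement of the fact.
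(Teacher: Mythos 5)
Your proposal is correct and is essentially the paper's own (unwritten) argument: the Fact is justified exactly by combining Proposition~\ref{prop:high_for_iso_vs_paths} with the Observation that high-for-paths degrees compute members of nonempty $\Sigma^1_1$ classes, plus the standard fact that the $\Pi^1_1$-randoms form a nonempty $\Sigma^1_1$ class. The only quibble is terminological: if ``$\Pi^1_1$-random'' is meant in the stronger sense of avoiding every null $\Pi^1_1$ set, the $\Sigma^1_1$ bound comes from Kechris's largest $\Pi^1_1$ null set rather than the Hjorth--Nies universal $\Pi^1_1$-ML test, but either way the class is $\Sigma^1_1$ of full measure and your one-line conclusion goes through unchanged.
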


Again, this contrasts with the situation for degrees that are low for isomorphism: no Martin-L\"{o}f random degree is low for isomorphism \cite{fs-lowim}.

We now use the notion of genericity to explore the relationship between degrees that are low for isomorphism and degrees that are high for isomorphism.  Recall that the triple jump of a degree high for isomorphism must compute $\+O$.  While this does not give an exact characterization of highness for isomorphism, it would be nice if it at least ruled out the degrees which are low for isomorphism.  However, this is not the case.

\begin{thm}[Kurtz~\cite{kurtz83}]
A degree $\*b$ is the $n$th jump of a weakly $(n+1)$-generic degree if and only if $\*b > \*0^{(n)}$ and $\*b$ is hyperimmune relative to $\*0^{(n)}$.
\end{thm}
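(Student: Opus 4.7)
The plan is to prove the two implications separately. The forward direction is a collection of density arguments; the backward direction---producing a weakly $(n+1)$-generic $G$ with $G^{(n)}\equiv_T\*b$ from the given hypotheses---is the substantive half and will proceed by a $\*b$-effective forcing construction paced by a hyperimmunity witness.

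For the forward direction, let $G$ be weakly $(n+1)$-generic.  For each $e$, the set $\{\sigma:(\exists x<|\sigma|)[\sigma(x)\ne\{e\}^{\*0^{(n)}}(x)]\}$ is dense $\Sigma^0_{n+1}$, so $G$ meets it and hence $G\not\le_T\*0^{(n)}$, forcing $G^{(n)}>_T\*0^{(n)}$.  The principal function $p_G$ of $G$ is $G$-computable and hence $G^{(n)}$-computable; for any $\*0^{(n)}$-computable $f$, the set $\{\sigma:(\exists x\ge f(|\sigma|))\,\sigma(x)=1\}$ is dense $\Sigma^0_{n+1}$, so meeting it repeatedly pushes $p_G$ past $f$ infinitely often.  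Hence $\*b=G^{(n)}$ is hyperimmune relative to $\*0^{(n)}$.

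For the backward direction, fix a $\*b$-computable $h$ not dominated by any $\*0^{(n)}$-computable function, together with a uniform enumeration $(D_e)_{e\in\omega}$ of $\Sigma^0_{n+1}$ sets in which each dense set appears infinitely often.  Build $G=\bigcup_s\sigma_s$ in stages: at stage $s$, use $\*0^{(n)}\le_T\*b$ to search the first $h(s)$ steps of the $\*0^{(n)}$-enumeration of $D_s$ for an extension of $\sigma_{s-1}$, taking the first such extension if found and otherwise appending a single reserved bit coding the $s$th bit of $\*b$.  The construction is uniformly $\*b$-computable, so $G\oplus\*0^{(n)}\le_T\*b$; and since weak $(n+1)$-genericity of the resulting $G$ yields $G^{(n)}\equiv_T G\oplus\*0^{(n)}$, this gives $G^{(n)}\le_T\*b$, while the reserved coding positions (readable from $G\oplus\*0^{(n)}$) supply $\*b\le_T G^{(n)}$.

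The main obstacle is verifying that every truly dense $D_e$ is in fact met.  The function giving the time required to find an extension of a string $\sigma$ in the $\*0^{(n)}$-enumeration of a truly dense $D_e$ is a total $\*0^{(n)}$-computable function; since $h$ is not dominated by any such function and each dense $D_e$ recurs as $D_s$ for infinitely many $s$, the bound $h(s)$ will exceed the required time infinitely often, and every dense set is met.  Careful bookkeeping is needed so that $|\sigma_{s-1}|$ does not outrun the arguments on which the modulus is being evaluated, and so that the sparse coding coordinates never conflict with density requirements---the latter following because every $\Sigma^0_{n+1}$ dense set remains dense within strings respecting any preassigned pattern on those reserved coordinates.
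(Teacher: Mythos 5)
You are proving a statement the paper only cites (it is Kurtz's theorem, quoted without proof), so I am comparing your sketch against the known argument, and the substantive half has a genuine gap. In the backward direction, the verification sentence ``since $h$ is not dominated by any $0^{(n)}$-computable function and each dense $D_e$ recurs as $D_s$ for infinitely many $s$, the bound $h(s)$ will exceed the required time infinitely often'' is a non sequitur. What you need at a stage $s$ devoted to $D_e$ is $h(s)\ge m_e(|\sigma_{s-1}|)$, where $m_e(k)$ is the least $t$ such that every string of length $\le k$ acquires an extension in the first $t$ steps of the $0^{(n)}$-enumeration of $D_e$. Two things go wrong. First, the stages devoted to a fixed index of $D_e$ form a fixed infinite computable set $S$, and a function can escape every $0^{(n)}$-computable function while being identically $0$ on $S$ (spread an escaping function over the complement of $S$); non-domination gives escape points somewhere, not inside a prescribed sparse set of stages, so recurrence of indices does not rescue you. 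Second, the threshold is evaluated at $|\sigma_{s-1}|$, and $|\sigma_{s-1}|$ grows with the \emph{earlier values of $h$} (an extension found within $h(t)$ steps has length up to about $h(t)$), so the comparison function $s\mapsto m_e(|\sigma_{s-1}|)$ is only $\*b$-computable, and $h$ is trivially dominated by $\*b$-computable functions. The ``careful bookkeeping'' you defer is exactly the mathematical content of this direction. The standard repair is to change the construction (at stage $s$ act for the least not-yet-met $e\le s$ whose enumeration yields an extension of $\sigma_{s-1}$ within $h(s)$ steps, so a fixed dense requirement only needs infinitely many good stages, each requirement acting at most once) and to prove a composition lemma: if $h$ escapes every $0^{(n)}$-computable function, then for every $0^{(n)}$-computable increasing $g$ there are infinitely many $s$ with $h(s)\ge g\bigl(s+\max_{t<s}h(t)\bigr)$, since otherwise $h$ would be majorized by the $0^{(n)}$-computable iterates of $g$; together with the bound $|\sigma_{s-1}|\le c+s+s\cdot\max_{t<s}h(t)$ this breaks the circularity. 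Without an argument of this kind the step fails as written.

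Two further, more local problems. In the forward direction, your dense set $\{\sigma:(\exists x\ge f(|\sigma|))\,\sigma(x)=1\}$ need not be dense at all (if $f(m)>m$ for every $m$ it is empty, since $\sigma(x)$ is only defined for $x<|\sigma|$), and even when met it does not bound $p_G$ at the right arguments; the correct sets are $\{\sigma 0^{f(|\sigma|)}:\sigma\in 2^{<\omega}\}$, a prefix of this form giving $p_G(|\sigma|)\ge f(|\sigma|)$, and density of the length-$>m$ portions yields infinitely many such arguments. In the backward direction, the claim that the reserved coding positions are ``readable from $G\oplus 0^{(n)}$'' is unjustified for your construction: deciding whether stage $s$ ended in a coding bit requires knowing whether the $h(s)$-bounded search failed, hence knowing $h$, hence $\*b$ --- which is circular. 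You need a coding scheme that is decodable from $G\oplus 0^{(n)}$ alone (for example, self-delimiting markers or coding into block lengths, checked not to interfere with meeting the dense sets). Finally, the identity $G^{(n)}\equiv_T G\oplus 0^{(n)}$ for weakly $(n+1)$-generic $G$ is correct but is itself a theorem of Kurtz (via weak $(n+1)$-genericity implying $n$-genericity) and should be cited as such rather than treated as immediate.
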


It follows that the degree of $\+O$ is the $n$th jump of a weakly $n$-generic degree for every $n$ and thus the double jump of a 2-generic, and therefore in particular it is the double jump of a low-for-isomorphism degree.

\section{Summary of results and future directions}\label{sec:etc}

At the time of this writing, the following questions appear both entirely open and deeply interesting.

\begin{Q} Is there a jump complete degree which is not uniformly high for isomorphism?
\end{Q}
\begin{Q}
Is there a jump complete degree which is not uniformly high for descending sequences?
\end{Q}

\begin{Q}
Is there a jump complete degree which is not Scott complete?
\end{Q}

\begin{Q}
Is there a degree $\mathbf{d}$ which is uniformly high for isomorphism, with $\mathbf{d}'' \equiv_T \sO$?
\end{Q}

\bibliographystyle{plain}
\bibliography{recmodels,universal,random}

\end{document}